\def\e{\mathrm{e}}
\def\ve{\varepsilon}
\def\me{\mathsf{e}}
\def\mv{\mathsf{v}}
\def\ea{\EuFrak{a}}
\def\ve{\varepsilon}
\def\la{\lambda}
\def\kF{\kappa_{\scaleto{\widetilde{F}}{4.5pt}}}
\def\kG{\kappa_{\scaleto{G}{3.3pt}}}
\def\comp{\mathbb{C}}
\def\real{\mathbb{R}}
\def\nat{\mathbb{N}}
\def\mcA{\mathcal{A}}
\def\mcB{\mathcal{B}}
\def\mcV{\mathcal{V}}
\def\mcE{\mathcal{E}}
\def\mcF{\mathcal{F}}
\def\mcG{\mathcal{G}}
\def\mcH{\mathcal{H}}
\def\mcT{\mathcal{T}}
\def\mcS{\mathcal{S}}
\def\mcX{\mathcal{X}}
\def\mcV{\mathcal{V}}
\def\mcW{\mathcal{W}}
\def\dx{\, dx}
\def\dt{\, dt}
\def\ds{\, ds}
\DeclareMathOperator{\sgn}{sgn}
\newtheorem{theo}{Theorem}
\newtheorem{lemma}[theo]{Lemma}
\newtheorem{prop}[theo]{Proposition}
\newtheorem{cor}[theo]{Corollary}
\newtheorem{defi}[theo]{Definition}
\newtheorem{asum}[theo]{Assumption}
\newtheorem{assum}[theo]{Assumptions}
\theoremstyle{remark}
\newtheorem{rem}[theo]{Remark}
\numberwithin{equation}{section} \numberwithin{theo}{section}
\begin{document}

\title[Stochastic reaction-diffusion equations on networks]{Stochastic reaction-diffusion equations on networks}

\author{M.~Kov\'acs}
\address{Faculty of Information Technology and Bionics\\P\'azm\'any P\'eter Catholic University\\
Budapest, Hungary\\
Chalmers University of Technology and University of Gothenburg\\
Gothenburg, Sweden}
\email{mihaly@chalmers.se}
\thanks{}

\author{E.~Sikolya}
\address{Department of Applied Analysis and Computational Mathematics\\
E\"otv\"os Lor\'and University\\
Budapest, Hungary\\
Alfr\'ed R\'enyi Institute of Mathematics,\\ 
Budapest, Hungary}
\email{eszter.sikolya@ttk.elte.hu}
\thanks{M.~Kov\'acs was supported by Marsden Fund of the Royal Society of New Zealand grant no. 18-UOO-143, VR grant number 2017-04274 and NKFI grant number K-131501.}

\date{\today}

\subjclass[2010]{Primary: 60H15, 35R60, Secondary: 35R02, 47D06}
\keywords{Stochastic evolution equations, stochastic reaction-diffusion equations on networks, analytic semigroups, stochastic FitzHugh--Nagumo equation}

\begin{abstract}
We consider stochastic reaction-diffusion equations on a finite network represented by a finite graph.
On each edge in the graph a multiplicative cylindrical Gaussian noise driven reaction-diffusion equation is given supplemented by a dynamic Kirchhoff-type law perturbed by multiplicative scalar Gaussian noise in the vertices. The reaction term on each edge is assumed to be an odd degree polynomial, not necessarily of the same degree on each edge, with possibly stochastic coefficients and negative leading term. We utilize the semigroup approach for stochastic evolution equations in Banach spaces to obtain existence and uniqueness of solutions with sample paths in the space of continuous functions on the graph. In order to do so we generalize existing results on abstract stochastic reaction-diffusion equations in Banach spaces.
\end{abstract}

\maketitle

\section{Introduction}

We consider a finite connected network, represented by a
finite graph $\mathsf{G}$ with $m$ edges $\me _1,\dots,\me _m$ and $n$
vertices $\mv_1,\dots,\mv_n$. We normalize and parametrize the edges on the interval $[0,1]$. We denote by $\Gamma(\mv_i)$ the set of all the indices of the edges
having an endpoint at $\mv _i$, i.e.,
\[\Gamma(\mv _i)\coloneqq\left\{j\in \{1,\ldots,m\}: \me _j(0)=\mv _i\hbox{ or } \me _j(1)=\mv _i\right\}.\]
We denote by $\Phi\coloneqq(\phi_{ij})_{n\times m}$
 the so-called incidence matrix of the graph $\mathsf{G}$, see Subsection \ref{subsec:systeq} for more details.
For $T>0$ given we consider the stochastic system written formally as
\begin{equation}\label{eq:stochnet}
\left\{\begin{array}{rcll}
\dot{u}_j(t,x)&=& (c_j u_j')'(t,x)+d_j(x)\cdot u_j'(t,x)&\\
&-& p_j(x)u_j(t,x)+f_j(t,x,u_j(t,x))&\\
&+&h_j(t,x,u_j(t,x))\frac{\partial w_j}{\partial t}(t,x), &t\in(0,T],\; x\in(0,1),\; j=1,\dots,m, \\
u_j(t,\mv _i)&=&u_\ell (t,\mv _i)\eqqcolon r_i(t), &t\in(0,T],\; \forall j,\ell\in \Gamma(\mv _i),\; i=1,\ldots,n,\\
\dot{r}_i(t)&=&[M r(t)]_{i}&\\
&&+\sum_{j=1}^m \phi_{ij}\mu_{j} c_j(\mv_i) u'_j(t,\mv_i)&\\
&&+g_i(t,r_i(t))\dot{\beta}_i(t), &t\in(0,T],\; i=1,\ldots,n,\\
u_j(0,x)&=&\mathsf{u}_{j}(x), &x\in [0,1],\; j=1,\dots,m.\\
r_i(0)&=&\mathsf{r}_{i}, & i=1,\dots ,n.
\end{array}
\right.
\end{equation}
where $(\beta_i(t))_{t\in [0,T]}$ are independent scalar Brownian motions and  $(w_j(t))_{t\in [0,T]}$ are independent cylindrical Wiener-processes defined in the Hilbert space $L^2(0,1; \mu_j dx)$ for some $\mu_j>0$, $j=1,\dots,m$. The reaction terms $f_j$  are assumed to be odd degree polynomials, with possible different degree on different edges, and with possibly stochastic coefficients and negative leading term, see \eqref{eq:fjdef}. The diffusion coefficients $g_i$ and $h_j$ are assumed to be locally Lipschitz continuous and satisfy appropriate growths conditions \eqref{eq:gidefnov} and \eqref{eq:hjdefnov}, respectively, depending on the maximum and minimum degrees of the polynomials $f_j$ on the edges. These become linear growth conditions when the degrees of the polynomials $f_j$ on the different edges coincide. The coefficients of the linear operator satisfy standard smoothness assumptions, see Subsection \ref{subsec:systeq}, while the matrix $M$ satisfies Assumptions \ref{as:M} and $\mu_j$, $ j=1,\dots,m$, are positive constants. 

While deterministic evolution equations on networks are well studied, see,  \cite{Al84,Al86,Al94,ABN01,Be85,Be88,Be88b,BN96,Ca97,CF03,EK19,Ka66,KMS07,KS05,LLS94,Lu80,MS07,Mu07,MR07,Mu14,Ni85} which is, admittedly, a rather incomplete list, the study of their stochastic counterparts is surprisingly scarce despite their strong link to applications. In \cite{BMZ08} additive L\'evy noise is considered that is square integrable with drift being a cubic polynomial. In  \cite{BZ14} multiplicative square integrable L\'evy noise is considered but with globally Lipschitz drift and diffusion coefficients and with a small time dependent perturbation of the linear operator. Paper \cite{BM10} treats the case when the noise is an additive fractional Brownian motion and the drift is zero. In \cite{CP17a} multiplicative Wiener perturbation is considered both on the edges and vertices with globally Lipschitz diffusion coefficient and zero drift and time-delayed boundary condition.  Finally, in \cite{CP17}, the case of multiplicative Wiener noise is treated with bounded and globally Lipschitz continuous drift and diffusion coefficients and noise both on the edges and vertices. 

In all these papers the semigroup approach is utilized in a Hilbert space setting and the only work that treats non-globally Lipschitz continuous coefficients is \cite{BMZ08} but the noise is there is additive and square-integrable. In this case, energy arguments are possible using the additive nature of the equation which does not carry over to the multiplicative case. Therefore, we use an entirely different toolset based on the semigroup approach for stochastic evolution equations in Banach spaces \cite{vNVW08}. For results on classical stochastic reaction-diffusion equations on domains in $\mathbb{R}^n$ we refer, for example, to \cite{BG99,BP99,Ce03,CeF17,Pe95}.  The papers \cite{KvN12,KvN19} introduce a rather general abstract framework for treating such equations using the above mentioned semigroup approach of \cite{vNVW08}. Unfortunately, the framework is still not quite general enough to apply it to \eqref{eq:stochnet}. The reason for this is as follows. One may rewrite \eqref{eq:stochnet} as an abstract stochastic Cauchy problem of the form \eqref{eq:SCP}. The setting of   \cite{KvN12,KvN19}  require a space $B$ which is sandwiched between some UMD Banach space $E$ of type 2 where the operator semigroup $S$ generated by the linear operator $A$ in the equation is strongly continuous and analytic, and the domain of some appropriate fractional power of $A$. The semigroup $S$ is assumed to be strongly continuous on $B$, a property that is used in an essential way via approximation arguments (for example, Yosida approximations). The drift $F$ is assumed to be a map from $B$ to $B$ and assumed to have favourable properties on $B$. In the abstract Cauchy problem \eqref{eq:SCPn} corresponding to \eqref{eq:stochnet}, such a space $\mathcal{B}$ given by \eqref{eq:mcB} plays the role of $B$. Here $\mcB$ is the space of continuous functions on the graph that are also continuous across the vertices (more precisely, isomorphic to it). But then the abstract drift $\mathcal{F}$ given by \eqref{eq:mcFdef} does not map $\mathcal{B}$ to itself unless very unnatural conditions on the coefficients of $f_j$ are introduced. One may consider the larger space $\mathcal{E}^c$ introduced in Definition \ref{defi:mcEc}, where continuity is only required on each edge (but not necessarily across the vertices). Then  $\mathcal{F}$ given by \eqref{eq:mcFdef} maps $\mathcal{E}^c$ to itself and $\mathcal{F}$ still has favourable properties on $\mathcal{E}^c$ and $\mathcal{E}^c$ is still sandwiched the same way as $\mathcal{B}$. The price to pay for considering this larger space is the loss of strong continuity of the semigroup $\mcS$ generated by the linear operator $\mcA$ of \eqref{eq:SCPn} on $\mathcal{E}^c$. However, the semigroup will be analytic on $\mathcal{E}^c$. This property that can be exploited in various approximation arguments that do not require strong continuity, see, for example, \cite{Lun95}.  Such arguments are used in the seminal paper \cite{Ce03}, where a system of reaction-diffusion equations are studied but, unlike in the present work, with a diagonal solution operator, and polynomials with the same degree in each component (see \cite[Remark 5.1, 2.]{Ce03}). While the framework of \cite{Ce03} is less general than that of \cite{KvN12,KvN19} the approximation arguments in the former do not use strong continuity. We therefore prove abstract results (Theorems \ref{theo:KvN4.3gen} and \ref{theo:KvN4.9gen}) concerning existence and uniqueness of the solution of \eqref{eq:SCPn} in the setting of \cite{vNVW08}, similar to that of Theorems 4.3 and 4.9 in  \cite{KvN12,KvN19}, but without the requirement that $\mcS$ is strongly continuous on the sandwiched space and using similar approximation arguments as in  \cite{Ce03}. The assumption on $\mathcal{F}$, in particular, Assumptions \ref{assum:mainvN4.9mod}(5), is also more general than the corresponding assumptions in \cite{Ce03} and  \cite{KvN12,KvN19} so that we may consider polynomials with different degrees on different edges. 

The main results of the paper concerning the system \eqref{eq:stochnet} are contained in Theorems \ref{theo:SCPnsolcont} and \ref{theo:Holderreg}. In Theorem \ref{theo:SCPnsolcont}  we show that there is a unique mild solution of \eqref{eq:stochnet} with values in $\mathcal{E}^c$. While, as we explained above, we cannot work with the space $\mathcal{B}$ directly, in Theorem \ref{theo:Holderreg} we prove via a bootstrapping argument that the solution, in fact, has values in $\mathcal{B}$; that is, the solution is also continuous across the vertices even when the initial condition is not. 


The paper is organized as follows. In Section \ref{sec:determnetwork} we collect partially known semigroup results for the linear deterministic version of \eqref{eq:stochnet}. For the sake of completeness, while the general approach is known, we include the proof of Proposition \ref{prop:formA} in Appendix \ref{app:pfpropdetermL2}, and the key technical results needed in the proof Proposition \ref{prop:sgrextend}, which is the main result of this section, in Appendix \ref{app:extsgrLp}.
In Section \ref{sec:srde} we prove two abstract results, Theorems \ref{theo:KvN4.3gen} and \ref{theo:KvN4.9gen}, concerning the existence and uniqueness of mild solutions \eqref{eq:SCP}. In Section \ref{sec:srden} we apply the abstract results to \eqref{eq:stochnet}. In order to do so, in Subsection \ref{sec:srdenprep} we first prove various embedding and isometry results and, in Proposition \ref{prop:mcAonmcEc}, we prove that the semigroup $S$ is analytic on $\mathcal{E}^c$. 
Subsection \ref{subsec:mainresults} contains the main existence and uniqueness results concerning  \eqref{eq:stochnet}, see Theorems \ref{theo:SCPnsolcont}, \ref{theo:SCPnsolconthj0} and \ref{theo:Holderreg}, \ref{theo:Holderreghj0}. In the latter cases we treat separately the models where stochastic noise is only present in the nodes.

\section{The heat equation on a network}\label{sec:determnetwork}

\subsection{The system of equations}\label{subsec:systeq}

We consider a finite connected network, represented by a
finite graph $\mathsf{G}$ with $m$ edges $\me _1,\dots,\me _m$ and $n$ vertices $\mv_1,\dots,\mv_n$. We normalize and parameterize the edges on the interval $[0,1]$. The
structure of the network is given by the $n\times m$ matrices
$\Phi^+\coloneqq (\phi^+_{ij})$ and $\Phi^-\coloneqq (\phi^-_{ij})$ defined by
\begin{equation}\label{eq:fiijpm}
\phi^+_{ij}\coloneqq \left\{
\begin{array}{rl}
1, & \hbox{if } \me _j(0)=\mv _i,\\
0, & \hbox{otherwise},
\end{array}
\right.
\qquad\hbox{and}\qquad
\phi^-_{ij}\coloneqq \left\{
\begin{array}{rl}
1, & \hbox{if } \me _j(1)=\mv _i,\\
0, & \hbox{otherwise,}
\end{array}
\right.
\end{equation}
for $i=1,\ldots ,n$ and $j=1,\ldots m.$ We denote by $\me _j(0)$ and $\me _j(1)$ the $0$ and the $1$ endpoint of the edge $\me _j$, respectively.
We refer to~\cite{KS05} for terminology. The $n\times m$ matrix
$\Phi\coloneqq (\phi_{ij})$ defined by \[\Phi\coloneqq \Phi^+-\Phi^-\] is known in
graph theory as \emph{incidence matrix} of the graph $\mathsf{G}$. Further,
let $\Gamma(\mv_i)$ be the set of all the indices of the edges
having an endpoint at $\mv _i$, i.e.,
\[\Gamma(\mv _i)\coloneqq \left\{j\in \{1,\ldots,m\}\colon  \me _j(0)=\mv _i\hbox{ or } \me _j(1)=\mv _i\right\}.\]
For the sake of simplicity, we will denote the values of a continuous function defined on the (parameterized) edges of the graph, that is of
\[f=\left(f_1,\ldots ,f_m\right)^{\top}\in \left(C[0,1]\right)^m\cong C\left([0,1],\mathbb{C}^m\right)\]
at $0$ or $1$ by $f_j(\mv_i)$ if $\me _j(0)=\mv _i$ or $\me _j(1)=\mv _i$, respectively, and $f_j(\mv_i)\coloneqq 0$ otherwise, for $j=1,\ldots ,m$.

We start with the problem
\begin{equation}\label{netcp}
\left\{\begin{array}{rclll}
\dot{u}_j(t,x)&=& (c_j u_j')'(t,x)-p_j(x)u_j(t,x), &t> 0,\; x\in(0,1),\; j=1,\dots,m, & (a)\\
u_j(t,\mv _i)&=&u_\ell (t,\mv _i)\eqqcolon r_i(t), &t> 0,\; \forall j,\ell\in \Gamma(\mv _i),\; i=1,\ldots,n,& (b)\\
\dot{r}_i(t)&=&[M r(t)]_{i}&&\\
&&+\sum_{j=1}^m \phi_{ij}\mu_{j} c_j(\mv_i) u'_j(t,\mv_i), &t> 0,\; i=1,\ldots,n,& (c)\\
u_j(0,x)&=&\mathsf{u}_{j}(x), &x\in [0,1],\; j=1,\dots,m & (d)\\
r_i(0)&=&\mathsf{r}_{i}, & i=1,\dots ,n. & (e)
\end{array}
\right.
\end{equation}
on the network. Note that $c_j(\cdot)$ and $u_j(t,\cdot)$ are
functions on the edge $\me_j$ of the network, so that the
right-hand side of~$(\ref{netcp}a)$ reads in fact as
\[(c_j u_j')'(t,\cdot)=\frac{\partial}{\partial x}\left( c_j
\frac{\partial}{\partial x}u_j\right)(t,\cdot), \qquad t\geq 0,\; j=1,\ldots,m.\]

The functions $c_1,\ldots,c_m$ are (variable) diffusion coefficients or conductances, and we assume that 
\[0<c_j\in C^1[0,1],\quad j=1,\ldots,m.\]

The functions $p_1,\ldots,p_m$ are nonnegative, continuous functions, hence
\begin{equation}\label{eq:pjpos}
0\leq p_j\in C[0,1],\quad j=1,\ldots,m.
\end{equation}

Equation~$(\ref{netcp}b)$ represents the continuity of the values attained by the system at the vertices in each time instant, and we denote by $r_i(t)$ the common functions values in the vertice $i$, for $i=1,\ldots,n$ and $t\geq 0$.

In $(\ref{netcp}c)$, $M\coloneqq\left(b_{ij}\right)_{n\times n}$ is a matrix satisfying the following
\begin{asum}\label{as:M}
The matrix $M=\left(b_{ij}\right)_{n\times n}$ is 
\begin{enumerate}
	\item real, symmetric,
	\item for $i \neq k,$ $b_{ik}\geq 0$, that is, $M$ has positive off-diagonal;
	\item 
\[b_{ii}+\sum_{k\neq i}b_{ik}< 0, \quad i=1,\dots ,n.\]
\end{enumerate}
\end{asum}

On the right-hand-side, $[M r(t)]_{i}$ denotes the $i$th coordinate of the vector $M r(t)$. The coefficients 
\[0<\mu_j,\quad  j=1,\ldots,m\] 
are strictly positive constants that influence the distribution of impulse happening in the ramification nodes according to the Kirchhoff-type law~$(\ref{netcp}c)$.

We now introduce the $n\times m$ \emph{weighted incidence matrices}
\[\Phi^+_w\coloneqq (\omega^+_{ij})\text{ and }\Phi^-_w\coloneqq (\omega^-_{ij})\] 
with entries
\begin{equation}\label{eq:matrixFiw}
\omega^+_{ij}\coloneqq \left\{
\begin{array}{ll}
\mu_j c_j(\mv_i), & \hbox{if } \me _j(0)=\mv _i,\\
0, & \hbox{otherwise},
\end{array}
\right. \qquad\hbox{and}\qquad \omega^-_{ij}\coloneqq \left\{
\begin{array}{ll}
\mu_j c_j(\mv_i), & \hbox{if } \me _j(1)=\mv _i,\\
0, & \hbox{otherwise}.
\end{array}
\right.
\end{equation}
With these notations, the Kirchhoff law $(\ref{netcp}c)$ becomes
\begin{equation}
\label{eq:Kir}
\dot{r}(t)=Mr(t)+\Phi_w^+ u'(t,0)-\Phi_w^- u'(t,1), \qquad t\geq 0.
\end{equation}

In equations (\ref{netcp}d) and (\ref{netcp}e) we pose the initial conditions on the edges and the vertices, respectively.

\subsection{Spaces and operators}

We are now in the position to rewrite our system \eqref{netcp} in form of an abstract Cauchy problem, following the concept of \cite{KMS07}. First we consider the (complex) Hilbert space
\begin{equation}\label{eq:E2}
E_2\coloneqq\prod_{j=1}^m L^2(0,1; \mu_j dx)
\end{equation}
as the \emph{state space} of the edges, endowed with the natural inner product
\[\langle u,v\rangle_{E_2}\coloneqq \sum_{j=1}^m \int_0^1 u_j(x)\overline{v_j(x)} \mu_j dx,\qquad
u=\left(\begin{smallmatrix}u_1\\ \vdots\\
u_m\end{smallmatrix}\right),\;v=\left(\begin{smallmatrix}v_1\\ \vdots\\
v_m\end{smallmatrix}\right)\in E_2.\]
Observe that $E_2$ is isomorphic to $\left( L^2(0,1)\right)^m$ with equivalence of norms.

We further need the \emph{boundary space} $\mathbb{C}^{n}$ of the vertices. According to $(\ref{netcp}b)$ we will consider such functions on the edges of the graph those values coincide in each vertex, that is, that are \emph{continuous in the vertices}. Therefore we introduce the \emph{boundary value operator} 
\[L\colon\left(C[0,1]\right)^m\subset E_2\to \mathbb{C}^n\]
with
\begin{align}\label{eq:Ldef}
D(L) & = \left\{u\in \left(C[0,1]\right)^m\colon u_j(\mv _i)=u_\ell (\mv _i),\; \forall j,\ell\in \Gamma(\mv _i),\; i=1,\ldots,n\right\};\notag\\
L u & \coloneqq\left(r_1,\ldots ,r_n\right)^{\top}\in \mathbb{C}^n,\quad r_i=u_j(\mv _i)\text{ for some } j\in \Gamma(\mv _i),\; i=1,\ldots,n.
\end{align}
The condition $u(t,\cdot)\in D(L)$ for each $t>0$ means that $(\ref{netcp}b))$ is for the function $u(\cdot,\cdot)$ satisfied.

On $E_2$ we define the operator
\begin{equation}\label{eq:opAmax}
A_{max}\coloneqq\begin{pmatrix}
\frac{d}{dx}\left(c_1 \frac{d}{dx}\right)-p_1 & & 0\\
 & \ddots &\\
0 & & \frac{d}{dx}\left(c_m \frac{d}{dx}\right)-p_m\\
\end{pmatrix}
\end{equation}
with domain
\begin{equation}\label{eq:domAmax}
D(A_{max})\coloneqq\left(H^2(0,1)\right)^m\cap D(L).
\end{equation}
This operator can be regarded as \emph{maximal} since no other boundary condition except continuity is imposed for the functions in its domain.

We further define the so called \emph{feedback operator} acting on $D(A_{max})$ and having values in the boundary space $\mathbb{C}^{n}$ as
\begin{align}\label{eq:opC}
D(C) & = D(A_{max});\\
C u & \coloneqq \Phi_w^+ u'(0)-\Phi_w^- u'(1).
\end{align}
With these notations, the Kirchhoff law $(\ref{netcp}c)$ becomes
\begin{equation}
\label{eq:KirC}
\dot{r}(t)=Mr(t)+Cu(t), \qquad t\geq 0,
\end{equation}
compare with \eqref{eq:Kir}.

With these notations, we can finally rewrite~\eqref{netcp} in form of an abstract Cauchy problem on the product space of the state space and the boundary space,
\begin{equation}\label{eq:mcE_2def}
\mathcal{E}_2\coloneqq E_2\times \ell^2(\mathbb{C}^{n})
\end{equation}
endowed with the natural inner product
\begin{equation}
\langle U,V \rangle_{\mathcal{E}_2}\coloneqq \langle u,v\rangle_{E_2}+\langle r,q\rangle_{\comp^n}\quad \text{ for }U,V\in \mathcal{E}_2, \quad
U=\left(\begin{smallmatrix}u\\ r\end{smallmatrix}\right), V=\Big(\begin{smallmatrix}v\\ q\end{smallmatrix}\Big),
\end{equation}
where
\[\langle r,q\rangle_{\comp^n}=\sum_{i=1}^n r_i\overline{q_i}\]
is the usual scalar product in $\mathbb{C}^{n}.$\\

We now define the operator matrix $\mathcal{A}_2$ on $\mathcal{E}_2$ as
\begin{equation}
\label{eq:calA}
\mcA_2=\begin{pmatrix}
	A_{max} & 0\\
	C & M
\end{pmatrix}
\end{equation}
with domain
\begin{equation}
\label{eq:domcalA}
D(\mcA_2)=\left\{ \left(\begin{smallmatrix}u\\ r\end{smallmatrix}\right)\in D(A_{max})\times \mathbb{C}^{n}\colon L u=r\right\}.
\end{equation}
We use the notation $\mcA_2$ because the operator will be later extended to other $L^p$-spaces, see Proposition \ref{prop:sgrextend}.

Using this, \eqref{netcp} becomes
\begin{equation}\label{eq:acp}
\left\{\begin{array}{rcll}
\dot{U}(t)&=& \mathcal{A}_2U(t), &t\geq 0,\\
U(0)&=&\left(\begin{smallmatrix}\mathsf{u}\\ \mathsf{r}\end{smallmatrix}\right),
\end{array}
\right.
\end{equation}
with $\mathsf{u}=(\mathsf{u}_1,\dots ,\mathsf{u}_m)^{\top}$, $\mathsf{r}=(\mathsf{r}_1,\dots ,\mathsf{r}_n)^{\top}$.

\subsection{Well-posedness of the abstract Cauchy problem}

To prove well-posedness of \eqref{eq:acp} we associate a sesquilinear form with the operator $\left(\mathcal{A}_2, D(\mathcal{A}_2)\right)$, similarly as e.g. in \cite{CP17} or (for the case of diagonal $M$) in \cite{MR07} and testify appropriate properties of it.
Define
\begin{align}\label{eq:form}
\ea(U,V)&\coloneqq \sum_{j=1}^m\int_0^1 \mu_j c_j(x) u'_j(x) \overline{g'_j(x)} dx+\sum_{j=1}^m\int_0^1 \mu_j p_j(x)u_j(x) \overline{v_j(x)} dx-\langle Mr,q\rangle_{\comp^n},\\ 
&\text{ for }U=\left(\begin{smallmatrix}u\\ r\end{smallmatrix}\right),\, V=\Big(\begin{smallmatrix}g\\ q\end{smallmatrix}\Big)
\end{align}
on the Hilbert space $\mathcal{E}_2$ with domain
\begin{equation}\label{eq:domform}
D\left(\ea\right)=\mathcal{V}\coloneqq \left\{U=\left(\begin{smallmatrix}u\\ r\end{smallmatrix}\right)\colon u\in \left(H^1(0,1)\right)^m\cap D(L),\, r\in \mathbb{C}^{n},\; L u=r\right\}.
\end{equation}
The next definition can be found e.g. in \cite[Sec.~1.2.3]{Ou05}.
\begin{defi}\label{defi:opassform}
From the form $\ea$ -- using the Riesz representation theorem -- we can obtain a unique operator
$\left(\mcB,D(\mcB)\right)$ in the following way:
\begin{equation*}
\begin{array}{rcl}
D(\mcB)&\coloneqq&\left\{U\in \mcV:\exists V\in \mcE_2 \hbox{ s.t. } \ea(U,H)=\langle V,H\rangle_{\mcE_2}\; \forall H\in \mcV\right\},\\
\mcB U&\coloneqq&-V.
\end{array}\end{equation*}
We say that the operator $\left(\mcB,D(\mcB)\right)$ is \emph{associated with the form $\ea$}.
\end{defi}


\begin{prop}\label{prop:formA}
The operator associated to the form $\ea$ \eqref{eq:form}--\eqref{eq:domform} is $(\mcA_2,D(\mcA_2))$ from \eqref{eq:calA} -- \eqref{eq:domcalA}.
\end{prop}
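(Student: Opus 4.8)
The plan is to verify the two inclusions $D(\mcA_2)\subseteq D(\mcB)$ with $\mcB = \mcA_2$ on this domain, and $D(\mcB)\subseteq D(\mcA_2)$, the equality of the operators then being automatic. The essential computational tool is integration by parts on each edge, combined with the algebraic identity that encodes how the boundary terms assemble into the feedback operator $C$ and the Kirchhoff law.

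First I would take $U=\left(\begin{smallmatrix}u\\ r\end{smallmatrix}\right)\in D(\mcA_2)$, so that $u\in\left(H^2(0,1)\right)^m\cap D(L)$ and $Lu=r$. For an arbitrary test element $H=\left(\begin{smallmatrix}g\\ q\end{smallmatrix}\right)\in\mcV$ (so $g\in\left(H^1(0,1)\right)^m\cap D(L)$ and $Lg=q$), I would compute $\ea(U,H)$ by integrating by parts in the first sum: for each edge $j$,
\[
\int_0^1 \mu_j c_j u_j' \overline{g_j'}\dx = -\int_0^1 \mu_j (c_j u_j')' \overline{g_j}\dx + \mu_j c_j(1) u_j'(1)\overline{g_j(1)} - \mu_j c_j(0)u_j'(0)\overline{g_j(0)}.
\]
Summing over $j$, the interior terms combine with the $p_j$-term to give $\langle -A_{max}u, g\rangle_{E_2}$. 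The boundary terms, using the conventions on $\phi^{\pm}_{ij}$ and the weighted incidence matrices $\Phi^{\pm}_w$ from \eqref{eq:matrixFiw} together with $g\in D(L)$ (so the value of $\overline{g_j}$ at $\mv_i$ is the common value $\overline{q_i}$), reorganize precisely into $-\langle Cu, q\rangle_{\comp^n}$ — this is the key algebraic bookkeeping step and where one must be careful with the signs coming from $\Phi_w^+ u'(0)-\Phi_w^- u'(1)$. Adding the $-\langle Mr,q\rangle$ term from the form, I get $\ea(U,H) = \langle -A_{max}u, g\rangle_{E_2} + \langle -(Cu + Mr), q\rangle_{\comp^n} = \langle -\mcA_2 U, H\rangle_{\mcE_2}$, which shows $U\in D(\mcB)$ with $\mcB U = \mcA_2 U$.

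For the reverse inclusion, I would take $U=\left(\begin{smallmatrix}u\\ r\end{smallmatrix}\right)\in D(\mcB)$; by definition $U\in\mcV$, hence $u\in\left(H^1(0,1)\right)^m\cap D(L)$ and $Lu=r$, and there is $V=\left(\begin{smallmatrix}v\\ s\end{smallmatrix}\right)\in\mcE_2$ with $\ea(U,H)=\langle V,H\rangle_{\mcE_2}$ for all $H\in\mcV$. Testing first against $H=\left(\begin{smallmatrix}g\\ 0\end{smallmatrix}\right)$ with $g\in\left(C_c^\infty(0,1)\right)^m$ (so $q=0$ and the boundary values vanish) gives $\sum_j\int_0^1 \mu_j c_j u_j'\overline{g_j'} + \mu_j p_j u_j\overline{g_j}\dx = \langle v,g\rangle_{E_2}$ for all such $g$; this is the weak formulation of $(c_j u_j')' - p_j u_j = -v_j$ on each edge, which by elliptic regularity (here just the one-dimensional statement that $c_j u_j'\in H^1$, using $c_j\in C^1$ and $c_j>0$, hence $u_j\in H^2$) yields $u\in\left(H^2(0,1)\right)^m$, so $u\in D(A_{max})$ and thus $U\in D(\mcA_2)$; moreover $A_{max}u = -v$, i.e. $v = -A_{max}u$. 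Finally, testing against general $H\in\mcV$ and integrating by parts as in the first part, the interior terms reproduce $\langle -A_{max}u,g\rangle_{E_2}=\langle v,g\rangle_{E_2}$, and comparing the remaining boundary and matrix terms forces $s = Cu + Mr$, so $\mcB U = -V = \left(\begin{smallmatrix}A_{max}u\\ Cu+Mr\end{smallmatrix}\right) = \mcA_2 U$.

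I expect the main obstacle to be the middle algebraic step: carefully matching the collection of pointwise boundary terms $\mu_j c_j(\mv_i)u_j'(\mv_i)$, with their orientation-dependent signs, to the expression $\Phi_w^+ u'(0)-\Phi_w^- u'(1)$ defining $C$, and getting every sign right relative to the sign convention in Definition \ref{defi:opassform} ($\mcB U = -V$). The regularity bootstrap and the density argument are routine in one dimension. Since this is essentially a known computation (the general approach is that of \cite{KMS07,CP17,MR07}), the proof is deferred to Appendix \ref{app:pfpropdetermL2} as announced; here I have only indicated its structure.
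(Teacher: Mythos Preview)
Your proof is correct and follows essentially the same approach as the paper's: both inclusions are established via integration by parts on each edge, with the boundary terms reassembled into the feedback operator $C$ through the weighted incidence matrices, and the reverse inclusion obtained by first testing against functions supported in the interior of the edges (the paper uses $H^1_0(0,1)$-functions rather than $C_c^\infty(0,1)$, but this is immaterial) to bootstrap the regularity $u\in(H^2(0,1))^m$. The only cosmetic difference is that you explicitly split the conclusion of the reverse inclusion into identifying $v=-A_{max}u$ first and then $s=-(Cu+Mr)$, whereas the paper concludes $\mcA_2 U=-V$ in one step from the identity $\ea(U,H)=-\langle\mcA_2 U,H\rangle_{\mcE_2}=\langle V,H\rangle_{\mcE_2}$ for all $H\in\mcV$.
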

\begin{proof}
See the proof of Proposition \ref{prop:formAapp}.
\end{proof}

In the subsequent proposition we will prove well-posedness of \eqref{eq:acp} not only on the Hilbert space $\mcE_2$ but also on $L^p$-spaces, which will be crucial for our later results. Therefore we introduce the following notions. Let

\begin{equation}\label{eq:Ep}
E_p\coloneqq\prod_{j=1}^m L^p(0,1; \mu_j dx),\quad p\in[1,\infty]
\end{equation}
and
\begin{equation}\label{eq:mcEp}
\mcE_p\coloneqq E_p\times \ell^p(\comp^n),
\end{equation}
endowed with the norm
\begin{equation}\label{eq:Epnorm}
\|U\|^p_{\mcE_p}\coloneqq \sum_{j=1}^m \|u_j\|^p_{L^p(0,1; \mu_j dx)}+\|r\|^p_{\ell^p},\quad U=\left(\begin{smallmatrix} u\\r\end{smallmatrix}\right)\in \mcE_p,\quad u\in E_p,\, r\in \mathbb{C}^{n},\quad p\in[1,\infty),
\end{equation}
\begin{equation}\label{eq:Einftynorm}
\|U\|_{\mcE_{\infty}}\coloneqq \max\left\{\max_{j=1,\dots ,m} \|u_j\|_{L^{\infty}(0,1)};\max_{i=1,\dots ,n}|r_i|\right\},\quad U=\left(\begin{smallmatrix} u\\r\end{smallmatrix}\right)\in \mcE_{\infty},\quad u\in E_{\infty},\, r\in \mathbb{C}^{n}.
\end{equation}

We now state the main result regarding well-posedness of \eqref{eq:acp}. The proof uses a technical lemma that can be found in Appendix \ref{app:extsgrLp}.

\begin{prop}\label{prop:sgrextend}
Let $M$ satisfy Assumptions \ref{as:M}. 
\begin{enumerate}[1.]
\item The operator $\left(\mcA_2, D(\mcA_2)\right)$ defined in \eqref{eq:calA} -- \eqref{eq:domcalA} is self-adjoint and positive definite. Furthermore, it generates a $C_0$ analytic, contractive, positive one-parameter semigroup $\left(\mcT_2(t)\right)_{t\geq 0}$ on $\mcE_2$.
\item The semigroup $(\mcT_2(t))_{t\geq 0}$ extends to a family of analytic, contractive, positive one-parameter semigroups $(\mcT_p(t))_{t\geq 0}$ on $\mcE_p$ for $1\leq p\leq \infty$, generated by $(\mcA_p,D(\mcA_p))$. These semigroups are strongly continuous if $p\in[1,\infty)$ and consistent in the sense that if $q,p\in [1,\infty]$ and $q\geq p$ then
\begin{equation}\label{eq:sgrconsistent}
\mcT_p(t)U=\mcT_q(t)U\text{ for }U\in \mcE_q.
\end{equation}
\end{enumerate}
\end{prop}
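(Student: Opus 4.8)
The plan is to establish part (1) via form methods and then bootstrap to part (2) by an extension-interpolation-duality argument. For part (1), I would invoke Proposition \ref{prop:formA}, which identifies $(\mcA_2, D(\mcA_2))$ as the operator associated with the form $\ea$. It therefore suffices to check that $\ea$ is symmetric, densely defined, closed, and bounded below (in fact, nonnegative or positive definite). Symmetry is immediate from the real symmetry of $M$ (Assumption \ref{as:M}(1)) and the fact that the first two integral terms are visibly symmetric; the associated operator is then self-adjoint. For lower boundedness, the first term $\sum_j \int_0^1 \mu_j c_j |u_j'|^2\,dx$ is nonnegative since $c_j>0$, the second is nonnegative since $p_j\ge 0$ by \eqref{eq:pjpos}, and $-\langle Mr,r\rangle_{\comp^n}$ is nonnegative and in fact coercive on $\comp^n$ because Assumption \ref{as:M}(2)--(3) makes $M$ a strictly diagonally dominant matrix with nonpositive diagonal, hence negative definite (by Gershgorin, or directly: $-\langle Mr,r\rangle = \sum_{i<k}b_{ik}|r_i-r_k|^2 + \sum_i(-b_{ii}-\sum_{k\ne i}b_{ik})|r_i|^2$, both sums nonnegative with the second strictly positive). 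Combining this with a Poincaré-type inequality on $(H^1(0,1))^m\cap D(L)$ relative to the vertex values gives positive definiteness of $\ea$, hence of $\mcA_2$. Closedness of $\ea$ on $\mcV$ follows since the form norm is equivalent to the $(H^1)^m\times\comp^n$ norm on $\mcV$ and $\mcV$ is complete in that norm. The Beurling--Deny criteria (submarkovian/positivity) then yield that $(\mcT_2(t))_{t\ge0}$ is contractive and positive: positivity follows from $\ea(U^+ , U^-)\le 0$ type conditions—concretely, the ideal property of $\mcV$ under $u\mapsto |u|$ together with $b_{ik}\ge0$ for $i\ne k$—and $L^\infty$-contractivity follows from $\ea(U\wedge\mathbf{1}, (U-\mathbf{1})^+)\ge 0$, using again the sign conditions on $M$. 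Analyticity and strong continuity on $\mcE_2$ are automatic for the semigroup generated by a self-adjoint operator bounded below.

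For part (2), the strategy is standard extrapolation of symmetric submarkovian semigroups. Since $(\mcT_2(t))_{t\ge0}$ is positivity preserving and $L^\infty$-contractive (hence, being symmetric, also $L^1$-contractive by duality), the Riesz--Thorin interpolation theorem gives contractivity on every $\mcE_p$, $p\in[1,\infty]$, and consistency \eqref{eq:sgrconsistent} is inherited from the consistency of $\mcT_2$ with its $L^1$ and $L^\infty$ realizations (the extended operators agree on the dense intersection $\mcE_2\cap\mcE_q$ which contains e.g. simple/bounded functions). Strong continuity on $\mcE_p$ for $p\in[1,\infty)$ follows from strong continuity on $\mcE_2$, density of $\mcE_2\cap\mcE_p$ in $\mcE_p$, and uniform boundedness of $\mcT_p(t)$, by the usual $3\varepsilon$ argument; it genuinely fails for $p=\infty$. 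The one point requiring more than abstract nonsense is \emph{analyticity} on $\mcE_p$ for $p\ne 2$, and especially for $p=\infty$ where there is no strong continuity to lean on. Here I would use the technical lemma announced from Appendix \ref{app:extsgrLp}: the natural route is to show that $(\mcT_p(t))$ is analytic on a sector by establishing a kernel/Gaussian-type bound or, more in the spirit of form methods, by using that a symmetric submarkovian semigroup which is analytic on $L^2$ is automatically analytic on $L^p$ for $1<p<\infty$ (a theorem going back to Stein), and then pushing to $p=1$ and $p=\infty$ using positivity together with the off-diagonal (Davies--Gaffney) estimates or an explicit resolvent bound $\|(\lambda-\mcA_p)^{-1}\|_{\mcE_p\to\mcE_p}\le C/|\lambda|$ on a sector, obtained from the Gaussian heat-kernel bounds for the edge operators $\frac{d}{dx}(c_j\frac{d}{dx})-p_j$ coupled through the rank-$n$ boundary/Kirchhoff perturbation.

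The main obstacle I anticipate is precisely the analyticity on $\mcE_\infty$ (and $\mcE_1$): the absence of a dense domain rules out the cleanest approximation arguments, so one must produce resolvent estimates $\|R(\lambda,\mcA_\infty)\|\le M/|\lambda|$ on a sector $|\arg\lambda|<\frac\pi2+\delta$ directly. The cleanest way to do this is to exploit the block structure of $\mcA_p = \left(\begin{smallmatrix}A_{max}&0\\mathbb{C}&M\end{smallmatrix}\right)$ with the coupling domain constraint $Lu=r$: one solves $(\lambda-\mcA_p)U=V$ by first solving the (decoupled, one-dimensional, $p$-independent) elliptic problems on each edge with the vertex data as Dirichlet boundary values—these are classical and admit sharp $L^\infty$ resolvent bounds uniform in $\lambda$ on a sector—and then reduces the vertex equation to an $n\times n$ linear system whose matrix $\lambda I - M + (\text{Dirichlet-to-Neumann})(\lambda)$ is invertible with controlled inverse because $M$ is negative definite and the Dirichlet-to-Neumann map has nonnegative real part. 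Keeping all constants uniform in $\lambda$ along the sector boundary, and uniform in $p$, is the technical heart of the matter, and that is exactly what Appendix \ref{app:extsgrLp} is there to supply; with that lemma in hand the remaining assembly—consistency, contractivity, positivity, strong continuity for $p<\infty$—is routine.
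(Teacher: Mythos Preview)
Your proposal for part (1) matches the paper's proof essentially line for line: form methods via Proposition \ref{prop:formA}, symmetry from $M$ real symmetric, accretivity from $c_j>0$, $p_j\ge 0$ and the negative definiteness of $M$ (which you derive explicitly from strict diagonal dominance, while the paper simply invokes Assumptions \ref{as:M}), and closedness/density by adapting the arguments in \cite{Mu07,MR07}. Likewise, your extrapolation argument for contractivity, positivity, consistency, and strong continuity on $\mcE_p$ for $p<\infty$ is the same Davies-type submarkovian extension the paper uses (citing \cite[Thm.~1.4.1]{Da90}), and Lemma \ref{lem:subMarkov} in Appendix \ref{app:extsgrLp} is precisely the Beurling--Deny verification you sketch.

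The one genuine divergence is the analyticity argument on $\mcE_p$ for $p\neq 2$, in particular $p=\infty$. You propose a direct resolvent-estimate route: decouple the edge problems with Dirichlet data at the vertices, reduce to an $n\times n$ system involving a Dirichlet-to-Neumann map, and control everything uniformly on a sector. That is a perfectly viable strategy and arguably more self-contained, but it is \emph{not} what the paper does, and it is not what Appendix \ref{app:extsgrLp} supplies. The paper instead establishes a Gaussian upper bound for the heat kernel of $\mcT_2$ by adapting \cite[Thm.~4.1]{PZ11} (with a suitably modified test-function class $W_{bc}$ to accommodate the vertex coupling), and then invokes the general principle \cite[\S7.4.3]{Ar04}, \cite[Thm.~5.4]{AterE97} that Gaussian bounds plus $L^2$-analyticity yield analyticity on every $L^p$, $1\le p\le\infty$. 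This is less hands-on than your DtN reduction but leans on heavier off-the-shelf machinery; your approach would trade those black boxes for an explicit computation on a one-dimensional network, at the cost of tracking sectorial constants through the coupling. Either route works; you have simply guessed wrong about which one the authors chose and about the content of the appendix lemma.
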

\begin{proof}
To prove part 1 of the claim observe that the form $\ea$ is symmetric since $M$ is real and symmetric, see the proof of \cite[Cor.~3.3]{Mu07}. By mimicking the proofs of \cite[Lem.~3.1]{Mu07} and \cite[Lem.~3.2]{MR07}, we obtain that the form $\ea$ is densely defined, continuous and closed. As a consequence of Assumptions \ref{as:M} we have that $M$ is negative definite. Now using that $c_j>0$, $p_j\geq 0$, $j=1,\dots ,m$, it is straightforward that the form $\ea$ is also accretive. Hence, by \cite[Prop. 1.24., Prop.~1.51, Thm.~1.52]{Ou05} we obtain that $(\mcA_2,D(\mcA_2))$ is densely defined, self-adjoint, positive definite, dissipative and sectorial. As a consequence we obtain that it generates a $C_0$-semigroup $\left(\mcT_2(t)\right)_{t\geq 0}$ on $\mcE_2$ having the properties claimed.\\
Applying Lemma \ref{lem:subMarkov} and the properties of $\mcA_2$ from part 1, we can use \cite[Thm.~1.4.1]{Da90} and $\mcE_q\hookrightarrow \mcE_p$ if $q\geq p$ to obtain all the statements in part 2 but analyticity of the semigroups. To prove this we apply \cite[Thm.~4.1]{PZ11} with the assumption $-M$ positive definite. We can mimick the proof step by step but we have to modify the spaces and operators appropriately to consider them as well as in the vertices. We only point out the crucial definition of the space $W_{bc}$ that should be
\begin{equation*}
W_{bc}\coloneqq\left\{\left(\begin{smallmatrix}\psi\\ r\end{smallmatrix}\right)\in (C^{\infty}(\real)\cap L^{\infty}(\real))\times\real^n\colon \begin{aligned} & |\psi'|_{\infty}\leq 1, |\psi''|_{\infty}\leq 1, \text{ and }\psi\text{ takes a constant}\\
& \text{value }\psi_v\text{ for }x=0,1,\dots ,m;\; r=\{\psi_v\}^n\end{aligned}\right\} \end{equation*}
in our case. Thus we obtain that the semigroup $\mcT_2$ on $\mcE_2$ admits Gaussian upper bound (see also \cite[Thm.~2.13]{CP17}). Since $\mcT_2$ is analytic by part 1, we can apply \cite[$\mathsection$7.4.3]{Ar04} (or \cite[Thm.~5.4]{AterE97}) and obtain that $\mcT_p$ is analytic for each $p\in [1,\infty].$
\end{proof}

Here and in what follows the notion of semigroup and its generator is understood in the sense of \cite[Def.~3.2.5]{ABHN11}. That is a strongly continuous function $T \colon (0,\infty)\to \mathcal{L}(E)$, (where $E$ is a Banach space and $\mathcal{L}(E)$ denotes the bounded linear operators on $E$) satisfying
\begin{enumerate}[(a)]
	\item $T(t + s) = T(t)T(s)$, $s,\, t > 0$,
	\item there exists $c > 0$ such that $\|T(t)\| \leq c$ for all $t\in (0, 1]$,
	\item $T(t)x = 0$ for all $t > 0$ implies $x = 0$
\end{enumerate}
is called a \emph{semigroup}. By \cite[Thm.~3.1.7]{ABHN11} there exist constants $M,\omega\geq 0$ such that $\|T(t)\|\leq M\e^{\omega t}$ for all $t>0$. From \cite[Prop.~3.2.4]{ABHN11} we obtain that there exists an operator $A$ with $(\omega ,\infty ) \subset \rho(A)$ and
\begin{equation}\label{eq:}
R(\la,A)=\int_0^{\infty}\e^{-\la t}T(t)\dt\quad (\la>\omega),
\end{equation}
and we call $(A,D(A))$ the \emph{generator} of $T$. The semigroup is \emph{strongly continuous} or $C_0$, that is $T \colon [0,\infty)\to \mathcal{L}(E)$ and
\begin{align}
T(t + s) &= T(t)T(s),\quad  s,\, t \geq 0,\\
T(0)&=Id
\end{align}
if and only if its generator is densely defined, see \cite[Cor.~3.3.11.]{ABHN11}. According to \cite[Def.~3.7.1]{ABHN11}, we call the semigroup \emph{analytic} or \emph{holomorphic} if there exists $\theta \in (0,\frac{\pi}{2}]$ such that $T$ has a holomorphic extension to
\[\Sigma_{\theta}\coloneqq \{z\in\comp\setminus \{0\}\colon |\mathrm{arg}\, z|<\theta\}\]
which is bounded on $\Sigma_{\theta'}\cap \{z\in\comp\colon |z|\leq 1\}$ for all $\theta'\in (0,\theta)$. We say that an analytic semigroup is \emph{contractive} when the semigroup operators considered on the positive real half-axis are contractions.

We also can prove -- analogously as in \cite[Lem.~5.7]{MR07} -- that the generators $(\mcA_p,D(\mcA_p))$ for $p<\infty$ have in fact the same form as in $\mcE_2$, with appropriate domain.
\begin{lemma}\label{lem:mcAp}
For all $p\in [1,\infty)$ the semigroup generators $(\mcA_p,D(\mcA_p))$ are given by the operator defined in~\eqref{eq:calA} with
domain
\begin{equation}\label{eq:mcAp}
D(\mcA_p)=\left\{\left(\begin{smallmatrix} u\\r\end{smallmatrix}\right)\in \left(\prod _{j=1}^m W^{2,p}(0,1;\mu_j dx)\cap D(L)\right)\times \comp^n\colon L u=r \right\}.
\end{equation}
\end{lemma}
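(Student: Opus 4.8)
The plan is to follow the strategy of \cite[Lem.~5.7]{MR07}, using the consistency of the semigroups from Proposition \ref{prop:sgrextend} together with elliptic regularity on each edge. Write $\mcA_p^\sharp$ for the operator defined by the right-hand side of \eqref{eq:calA} with domain \eqref{eq:mcAp}; I want to show $\mcA_p = \mcA_p^\sharp$. Since $\mcA_p$ generates an analytic semigroup, it suffices (by a standard argument) to show that $\mcA_p^\sharp$ is a closed extension of $\mcA_p^\sharp \cap \mcA_p$ on a large enough set and that some $\la$ in the resolvent set of $\mcA_p$ also satisfies $\la - \mcA_p^\sharp$ injective, or more directly that $\mcA_p^\sharp \subset \mcA_p$ and $\la - \mcA_p^\sharp$ is surjective for one $\la$ in $\rho(\mcA_p)$.

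First I would establish the inclusion $\mcA_p^\sharp \subset \mcA_p$. Take $U = \left(\begin{smallmatrix} u\\r\end{smallmatrix}\right)$ in the domain \eqref{eq:mcAp}. Using consistency \eqref{eq:sgrconsistent} and the density of $\mcE_\infty \cap \mcE_p$-type arguments, or more simply by a direct computation with the resolvent, one checks that for $\la$ large the resolvent $R(\la,\mcA_2)$ maps sufficiently regular data into $D(\mcA_p^\sharp)$, and that $R(\la,\mcA_2) = R(\la,\mcA_p)$ on the overlap by \eqref{eq:sgrconsistent}. Concretely: given $F \in \mcE_2 \cap \mcE_p$, the element $V = R(\la,\mcA_2)F$ solves $(\la - \mcA_2)V = F$, so on each edge $\la v_j - (c_j v_j')' + p_j v_j = f_j$ in the weak sense; since $c_j \in C^1$, $p_j \in C$, elliptic regularity on $(0,1)$ upgrades $v_j \in H^1$ with $f_j \in L^p$ to $v_j \in W^{2,p}(0,1)$, and the Kirchhoff/continuity conditions encoded in $D(\mcA_2)$ persist. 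Hence $V \in D(\mcA_p^\sharp)$ and $\mcA_p^\sharp V = \mcA_2 V = \mcA_p V$. By density of such $F$ in $\mcE_p$ and closedness of both operators, $R(\la,\mcA_p)$ maps $\mcE_p$ into $D(\mcA_p^\sharp)$ with $\mcA_p^\sharp R(\la,\mcA_p) = \mcA_p R(\la,\mcA_p)$, which gives $\mcA_p \subset \mcA_p^\sharp$.

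For the reverse inclusion I would show $\la - \mcA_p^\sharp$ is injective for $\la \in \rho(\mcA_p)$ (e.g. $\la$ large positive), equivalently that the only $U \in D(\mcA_p^\sharp)$ with $(\la - \mcA_p^\sharp)U = 0$ is $U = 0$. Such a $U$ has $u_j \in W^{2,p}(0,1)$ solving $\la u_j - (c_j u_j')' + p_j u_j = 0$ on each edge together with the vertex conditions $Lu = r$ and $\la r_i = [Mr]_i + [Cu]_i$; by Sobolev embedding $u_j \in C^1[0,1]$, so $U$ lies in a space where the $\mcE_2$-computation applies and one deduces $(\la - \mcA_2)U = 0$, whence $U = 0$ since $\la \in \rho(\mcA_2)$. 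Combined with $\mcA_p \subset \mcA_p^\sharp$ (so $\la - \mcA_p^\sharp$ is already surjective, being a restriction-free extension whose restriction is onto) this forces $\mcA_p^\sharp = \mcA_p$.

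The main obstacle I anticipate is the bookkeeping around the boundary/vertex conditions when passing between the weak ($H^1$, form-domain) formulation on $\mcE_2$ and the strong ($W^{2,p}$) formulation on $\mcE_p$: one must verify that the continuity condition $u \in D(L)$ and the Kirchhoff condition built into $D(\mcA_2)$ are exactly the ones recovered after elliptic regularity, and that these make sense and are preserved for $p \neq 2$, including the endpoint behaviour of $u_j'$ at the vertices (which is fine since $W^{2,p}(0,1) \hookrightarrow C^1[0,1]$ for all $p \geq 1$). The analytic-semigroup input from Proposition \ref{prop:sgrextend} is what guarantees $\rho(\mcA_p) \neq \emptyset$ so that the resolvent argument can even start; everything else is the elliptic regularity theory on the interval applied edge by edge, exactly as in \cite{MR07}.
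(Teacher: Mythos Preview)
Your approach matches the paper's: the paper gives no detailed proof and simply refers to \cite[Lem.~5.7]{MR07}, exactly the reference you build your sketch on, using consistency of the resolvents from Proposition~\ref{prop:sgrextend} and edgewise elliptic regularity.

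Two small points to tighten. First, the second paragraph opens with ``establish the inclusion $\mcA_p^\sharp \subset \mcA_p$'' but the argument you give (and the conclusion you draw) is $\mcA_p \subset \mcA_p^\sharp$; this is just a slip, but fix the direction. Second, in the injectivity step for $p<2$ you cannot pass directly from $u_j\in W^{2,p}(0,1)\hookrightarrow C^1[0,1]$ to $U\in D(\mcA_2)$, because $W^{2,p}\not\hookrightarrow H^2$ when $p<2$. You need one bootstrap: from $(c_ju_j')'=(\la+p_j)u_j$ with $u_j\in C^1$ and $c_j\in C^1$, $p_j\in C$, the right-hand side is continuous, hence $c_ju_j'\in C^1$, hence $u_j\in C^2[0,1]\subset H^2(0,1)$, and only then does $U\in D(\mcA_2)$ follow. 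With that line added the argument is complete.
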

As a summary we obtain the following theorem.
\begin{theo}\label{theo:determLp}
The first order problem~\eqref{netcp} considered with $\mcA_p$ instead of $\mcA_2$ is well-posed on $\mcE_p$, $p\in[1,\infty)$, i.e., for all initial data
$\left(\begin{smallmatrix} \mathsf{u}\\\mathsf{r} \end{smallmatrix}\right)\in \mcE_p$ the
problem~\eqref{netcp} admits a unique mild solution that continuously depends on the initial data.
\end{theo}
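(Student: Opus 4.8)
The plan is to deduce Theorem~\ref{theo:determLp} directly from Proposition~\ref{prop:sgrextend} and Lemma~\ref{lem:mcAp}, since essentially all the analytic work has already been done. By Proposition~\ref{prop:sgrextend}(2), for each $p\in[1,\infty)$ the operator $(\mcA_p,D(\mcA_p))$ generates a strongly continuous (analytic, contractive) semigroup $(\mcT_p(t))_{t\geq 0}$ on the Banach space $\mcE_p$. By the standard semigroup well-posedness theory for abstract Cauchy problems (for instance \cite[Thm.~3.1.12]{ABHN11} or the classical Hille--Yosida framework), this is equivalent to the statement that the abstract Cauchy problem
\begin{equation*}
\dot U(t)=\mcA_p U(t),\quad t\geq 0,\qquad U(0)=\left(\begin{smallmatrix}\mathsf u\\ \mathsf r\end{smallmatrix}\right)
\end{equation*}
is well-posed on $\mcE_p$: for every initial value $\left(\begin{smallmatrix}\mathsf u\\ \mathsf r\end{smallmatrix}\right)\in\mcE_p$ there is a unique mild solution $U(t)=\mcT_p(t)\left(\begin{smallmatrix}\mathsf u\\ \mathsf r\end{smallmatrix}\right)$, and the solution map is continuous in the initial data uniformly on compact time intervals because the semigroup operators are bounded (indeed contractive).

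The remaining point is to explain why this abstract statement is the same as well-posedness of the concrete system~\eqref{netcp}. First I would invoke Lemma~\ref{lem:mcAp}, which identifies $\mcA_p$ with the operator matrix $\left(\begin{smallmatrix}A_{max}&0\\ C&M\end{smallmatrix}\right)$ acting on the domain~\eqref{eq:mcAp}; that domain encodes precisely the requirements that $u_j(t,\cdot)\in W^{2,p}(0,1;\mu_j\,dx)$, that $u(t,\cdot)$ is continuous across the vertices (the condition $u\in D(L)$), and that the vertex values coincide with $r(t)$ (the condition $Lu=r$). Thus a classical solution $U\in C^1([0,\infty);\mcE_p)\cap C([0,\infty);D(\mcA_p))$ of the abstract problem has components $(u,r)$ satisfying $(\ref{netcp}a)$, $(\ref{netcp}b)$ and $(\ref{netcp}c)$ in $L^p$, with the initial conditions $(\ref{netcp}d)$--$(\ref{netcp}e)$; conversely any solution of~\eqref{netcp} in this regularity class gives a classical solution of the abstract problem. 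Mild solutions are then obtained for general $\mcE_p$ data by density and the contractivity bound on $\mcT_p$.

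The only genuine subtlety — and the step I would be most careful about — is bookkeeping rather than analysis: making sure the formal right-hand side of~$(\ref{netcp}a)$, namely $(c_ju_j')'-p_ju_j$, together with the Kirchhoff term $\sum_j\phi_{ij}\mu_jc_j(\mv_i)u_j'(t,\mv_i)$ in~$(\ref{netcp}c)$, really matches the entries $A_{max}$ and $C$ of the operator matrix. This is exactly the content already recorded in~\eqref{eq:opAmax}, \eqref{eq:opC} and the identity $Cu=\Phi_w^+u'(0)-\Phi_w^-u'(1)$ together with the sign convention $\phi_{ij}=\phi_{ij}^+-\phi_{ij}^-$ and the weighted entries $\omega_{ij}^\pm=\mu_jc_j(\mv_i)$; so one merely has to observe that $[\Phi_w^+u'(0)-\Phi_w^-u'(1)]_i=\sum_{j=1}^m\phi_{ij}\mu_jc_j(\mv_i)u_j'(\mv_i)$ with the convention $u_j'(\mv_i)=0$ when $j\notin\Gamma(\mv_i)$. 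Once this identification is in place, the theorem follows from Proposition~\ref{prop:sgrextend}(2) and Lemma~\ref{lem:mcAp} with no further work; I would present it as a short corollary-style argument rather than a detailed proof. Note the restriction to $p<\infty$: strong continuity of $(\mcT_\infty(t))_{t\geq 0}$ fails in general, so on $\mcE_\infty$ one would only have a (non-$C_0$) analytic semigroup and the notion of well-posedness used here would need to be weakened — which is why the statement is confined to $p\in[1,\infty)$.
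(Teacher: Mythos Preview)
Your proposal is correct and follows exactly the approach the paper takes: the theorem is stated there without proof, simply ``As a summary we obtain the following theorem,'' immediately after Proposition~\ref{prop:sgrextend} and Lemma~\ref{lem:mcAp}. Your argument makes explicit precisely the standard semigroup well-posedness reasoning that the paper leaves implicit, so there is nothing to add or correct.
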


\section{Abstract results for a stochastic reaction-diffusion equation}\label{sec:srde}
Let $(\Omega,\mathscr{F},\mathbb{P})$ is a complete probability space endowed with a right continuous filtration $\mathbb{F}=(\mathscr{F}_t)_{t\in [0,T]}$ for a given $T>0$. Let $(W_H(t))_{t\in [0,T]}$ be a cylindrical Wiener process, defined on $(\Omega,\mathscr{F},\mathbb{P})$, in some Hilbert space $H$ with respect to the filtration $\mathbb{F}$; that is,
  $(W_H(t))_{t\in [0,T]}$ is $(\mathscr{F}_t)_{t\in [0,T]}$-adapted and for all $t>s$, $W_H(t)-W_H(s)$ is independent of $\mathscr{F}_s$.

First we prove a generalized version of the result of M.~Kunze and J.~van Neerven, concerning the following abstract equation
\begin{equation}\tag{SCP}\label{eq:SCP}
\left\{
\begin{aligned}
d X(t)&=[A X(t)+F(t,X(t))+\widetilde{F}(t,X(t))]dt+G(t,X(t))d W_{H}(t)\\
X(0)&=\xi,
\end{aligned}
\right.
\end{equation}
see \cite[Sec.~3]{KvN12}. 

In what follows let $E$ be a real Banach space. Occasionally -- without being stressed -- we have to pass to appropriate complexification (see e.g. \cite{MST99}) when we use sectoriality arguments.
If we assume that $(A, D(A))$ generates a strongly continuous, analytic semigroup $S$ on the Banach space $E$ with $\Vert S(t)\Vert\leq M\e^{\omega t}$, $t\geq 0$ for some $M\geq 1$ and $\omega\in\real$, then for $\omega'>\omega$ the fractional powers $(\omega'-A)^{\alpha}$ are well-defined for all $\alpha\in(0,1).$ In particular, the fractional domain spaces
\begin{equation}\label{eq:fractdom}
E^{\alpha}\coloneqq D((\omega'-A)^{\alpha}),\quad \|u\|_{\alpha}\coloneqq \|(\omega'-A)^{\alpha}u\|,\quad u\in D((\omega'-A)^{\alpha})
\end{equation}
are Banach spaces. It is well-known (see e.g. \cite[$\mathsection$II.4--5.]{EN00}), that up to equivalent norms, these space are independent of the choice of $w'.$

For $\alpha\in(0,1)$ we define the extrapolation spaces $E^{-\alpha}$ as the completion of $E$ under the norms $\|u\|_{-\alpha}\coloneqq \|(\omega'-A)^{-\alpha}u\|$, $u\in E$. These spaces are independent of $\omega'>\omega$ up to an equivalent norm.

We fix $E^0\coloneqq E$.

\begin{rem}\label{rem:omega0}
If $\omega=0$ (hence, the semigroup $S$ is bounded), then by \cite[Proposition 3.1.7]{Haase06}
we can choose $\omega'=0$. That is,
\[E^{\alpha}\cong D((-A)^{\alpha}),\quad \alpha\in [0,1),\]
when $D((-A)^\alpha)$ is equipped with the graph norm.
\end{rem}
Let $E^c$ be a Banach space, $\|\cdot \|$ will denote $\|\cdot\|_{E^c}$. For $u\in E^c$ we define the \emph{subdifferential of the norm at} $u$ as the set
\begin{equation}\label{eq:subdiff}
\partial\|u\|\coloneqq \left\{u^*\in \left(E^c\right)^*\colon \|u^*\|_{\left(E^c\right)^*}=1\text{ and }\langle u,u^*\rangle=1\right\}
\end{equation}
which is not empty by the Hahn-Banach theorem.

We introduce the following assumptions for the operators in (SCP). 

\begin{assum}$ $\label{assum:mainvN4.3mod}
\begin{enumerate}
	\item Let $E$ be a UMD Banach space of type $2$ and $(A, D(A))$ a densely defined, closed and sectorial operator on $E$.
	\item We have continuous (but not necessarily dense) embeddings for some $\theta\in (0,1)$ \[E^{\theta}\hookrightarrow E^c\hookrightarrow E.\]
	\item The strongly continuous analytic semigroup $S$ generated by $(A, D(A))$ on $E$ restricts to an analytic, contractive semigroup, denoted by $S^{c}$ on $E^c$, with generator $(A^c,D(A^c))$.
	\item The map $F\colon [0,T]\times\Omega\times E^c\to E^c$ is continuous in the first variable and locally Lipschitz continuous in the third variable in the sense that for all $r>0$, there exists a constant $L_{F}^{(r)}$ such that
	      \[\left\|F(t,\omega,u)-F(t,\omega,v)\right\|\leq L_{F}^{(r)}\|u-v\|\]
	      for all $\|u\|,\|v\|\leq r$ and $(t,\omega)\in [0,T]\times \Omega$ and there exists a constant $C_{F,0}\geq 0$ such that
				\[\left\|F(t,\omega,0)\right\|\leq C_{F,0},\quad t\in[0,T],\; \omega\in\Omega.\]
				Moreover, for all $u\in E^c$ the map $(t,\omega)\mapsto F(t,\omega,u)$ is strongly measurable and adapted.\\
				Finally, for suitable constants $a',b'\geq 0$ and $N\geq 1$ we have
				\[\langle A u+F (t,u+v), u^*\rangle\leq a'(1+\|v\|)^N+b'\|u\|\]
				for all $u\in D(A|_{E^c})$, $v\in E^c$ and $u^*\in\partial\|u\|,$ see \eqref{eq:subdiff}.
	\item For some constant $\kF\geq 0$, the map $\widetilde{F}\colon [0,T]\times\Omega\times E^c\to E^{-\kF}$ is globally Lipschitz continuous in the third variable, uniformly with respect to the first and second variables.
				Moreover, for all $u\in E^c$ the map $(t,\omega)\mapsto \widetilde{F}(t,\omega,u)$ is strongly measurable and adapted.\\
				Finally, for some $d'\geq 0$ we have 
				\begin{equation}\label{eq:Ftildegrow}
				\left\|\widetilde{F}(t,\omega,u)\right\|_{E^{-\kF}}\leq d'\left(1+\|u\|\right)
				\end{equation}
				for all $(t,\omega,u)\in [0,T]\times \Omega\times E^c.$
	\item Let $\gamma(H,E^{-\kG})$ denote the space of $\gamma$-radonifying operators from $H$ to $E^{-\kG}$ for some constant $\kG\geq 0$, see e.g. \cite[Sec.~3.1]{KvN12}.
	      Then the map $G\colon [0,T]\times\Omega\times E^c\to \gamma(H,E^{-\kG})$ is locally Lipschitz continuous in the sense that for all $r>0$, there exists a constant $L_{G}^{(r)}$ such that
	      \[\left\|G(t,\omega,u)-G(t,\omega,v)\right\|_{\gamma(H,E^{-\kG})}\leq L_{G}^{(r)}\|u-v\|\]
	      for all $\|u\|,\|v\|\leq r$ and $(t,\omega)\in [0,T]\times \Omega$.
				Moreover, for all $u\in E^c$ and $h\in H$ the map $(t,\omega)\mapsto G(t,\omega,u)h$ is strongly measurable and adapted.\\
				Finally, for some $c'\geq 0$ we have 
				\begin{equation}\label{eq:ggrow}
				\left\|G(t,\omega,u)\right\|_{\gamma(H,E^{-\kG})}\leq c'\left(1+\|u\|\right)^{\frac{1}{N}}
				\end{equation}
				for all $(t,\omega,u)\in [0,T]\times \Omega\times E^c.$
\end{enumerate}
\end{assum}

For a thorough discussion of UMD Banach spaces we refer to \cite{Bu01}. Banach spaces of type $p\in[1,2]$ are treated in depth in \cite[Sec.~6]{AK16}. In particular, any $L^p$-space with $p\in[2,\infty)$ has type $2$. However, the space of continuous functions on any locally compact Hausdorff space is not a UMD space.

\begin{rem}
Assumptions \ref{assum:mainvN4.3mod}(1)--(4) and (6) are -- in the first 3 cases slightly modified versions of -- Assumptions (A1), (A5), (A4), (F') and (G') in \cite{KvN12}. Assumption \ref{assum:mainvN4.3mod}(5) is the assumption of \cite[Prop.~3.8]{KvN12} on $\widetilde{F}$. The main difference is that here the semigroup $S$ is not necessarily strongly continuous on $E^c$ but is analytic and that the embedding of $E^{\theta}\hookrightarrow E^c$ is not necessarily dense. 

Instead of \eqref{eq:ggrow} in Assumption \ref{assum:mainvN4.3mod}(6) one may assume a slightly improved estimate
\[
\left\|G(t,\omega,u)\right\|_{\gamma(H,E^{-\kG})}\leq c'\left(1+\|u\|\right)^{\frac{1}{N}+\ve}
\]
for some small $\ve>0$ depending on the parameters, as it is stated in (G') of \cite{KvN12}. For simplicity we chose not to include the small $\ve$ explicitly because to prove our main results it will not be needed. 

We use that $E$ is of type $2$ in a crucial way e.g.~in the first step of the proof of Theorem  \ref{theo:KvN4.3gen}, \eqref{eq:estimateGn} and \eqref{eq:L2embedding}, obtaining that the simple Lipschitz and growth conditions for the operator $G$ in Assumption \ref{assum:mainvN4.3mod}(6) suffice, see \cite[Lem.~5.2]{vNVW08}.
\end{rem}

\begin{rem}\label{rem:analdisscontr}
In Assumptions \ref{assum:mainvN4.3mod}(3) we use the fact that since $S$ is analytic on $E$ and by Assumptions \ref{assum:mainvN4.3mod}(2), $D(A)\subset E^{\theta}\hookrightarrow E^c$ holds, $S$ leaves $E^c$ invariant. Hence, the restriction $S^c$ of $S$ on $E^c$ makes sense, and by assumption, $S^c$ is an analytic contraction semigroup on $E^c$. Using \cite[Prop.~3.7.16]{ABHN11} we obtain that this is equivalent to the fact that the generator $A^c$ of $S^c$ is sectorial and dissipative. Note that since $S^c$ is not necessarily strongly continuous, $A^c$ is not necessarily densely defined.

However, one can easily prove that $(A^c,D(A^c))$ is the \emph{part} of $(A, D(A))$ in $E^c$. By the definition of the generator (see \cite[Def.~3.2.5]{ABHN11}) we have that for $\lambda>0$ and $u\in E^c$
\begin{equation}
R(\la,A^c)u =\int_0^{\infty}\e^{-\la t}S^c(t)u\dt=\int_0^{\infty}\e^{-\la t}S (t)u\dt.
\end{equation}
By Assumptions \ref{assum:mainvN4.3mod}(2), the last integral also converges in the norm of $E$. Thus,
\begin{equation}
R(\la,A^c)u=R(\la,A)u,\quad \lambda>0,\; u\in E^c.
\end{equation}
The inclusion $D(A)\subset E^c$ implies that in this case also $R(\la,A)u\in E^c$ is satisfied. Hence, we conclude that
\[R(\la,A^c)u=R(\la,A|_{E^c})u,\quad \lambda>0,\; u\in E^c.\] 
Since $\la>0$ and $u\in E^c$ were arbitrary, the equality $(A^c,D(A^c))=(A|_{E^c}, D(A|_{E^c}))$ holds.
\end{rem} 

Recall that a mild solution of \eqref{eq:SCP} is a solution of the following integral equation
\begin{align}\label{eq:mildsol}
X(t)&=S(t)\xi+\int_0^tS(t-s)\left(F(s,X(s))+\widetilde{F}(s,X(s))\right)\ds+\int_0^tS(t-s)G(s,X(s))\,dW_H(s)\notag\\
&\eqqcolon S(t)\xi+S\ast F(\cdot,X(\cdot))(t)+S\ast \widetilde{F}(\cdot,X(\cdot))(t)+S\diamond G(\cdot,X(\cdot))(t)
\end{align}
where
\[S\ast f(t)=\int_0^tS(t-s)f(s)\ds\]
denotes the "usual" convolution, and
\[S\diamond g(t)=\int_0^tS(t-s)g(s)\,dW_{H}(s)\]
denotes the stochastic convolution with respect to $W_{H}.$ We also implicitly assume that all the terms on the right hand side of  \eqref{eq:mildsol} are well-defined.

The following result is analogous to the statement of \cite[Lem.~4.4]{KvN12} but with the semigroup $S^c$ being an analytic contraction semigroup on $E^c$ which is not necessarily strongly continuous. The main difference in the proof is the use of a different approximation argument as the one in \cite{KvN12} uses the strong continuity of $S^c$ on $E^c$ (the latter denoted by $B$ there) in a crucial manner.

\begin{lemma}\label{lem:44}
Let $S^c$ be an analytic contraction semigroup  on $E^c$. Let $x\in E^c$ and $F\colon [0,T]\times\Omega\times E^c\to E^c$ satisfy condition (4) of Assumptions \ref{assum:mainvN4.3mod}, and denote by $C\coloneqq\max\{a',b'\}$. Assume that $u\in C((0,T];E^c)\cap L^{\infty}(0,T;E^c)$ and $v\in C([0,T]\;E^c)$ satisfy
\begin{equation}\label{eq:lem44mildsol}
u(t)=S^c(t)x+\int_0^tS^c(t-s)F(s,u(s)+v(s))\ds,\quad \forall t\in[0,T].
\end{equation}
Then
\begin{equation}\label{eq:lem44}
\left\|u(t)\right\|\leq \e^{C\cdot t}\left(\|x\|+\int_0^t C\left(1+\|v(s)\|\right)^N \ds\right),\quad \forall t\in[0,T].
\end{equation}
\end{lemma}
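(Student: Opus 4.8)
\emph{Plan.} The plan is to make rigorous the naive differential‑inequality argument by a resolvent regularization. It is enough to prove \eqref{eq:lem44} for $t\in(0,T]$, since at $t=0$ the identity \eqref{eq:lem44mildsol} forces $u(0)=x$ and both sides of \eqref{eq:lem44} equal $\|x\|$. Put $g(s):=F(s,u(s)+v(s))$; by condition~(4) of Assumptions~\ref{assum:mainvN4.3mod} together with $u\in L^\infty(0,T;E^c)$ and $v\in C([0,T];E^c)$, the map $g$ is continuous on $(0,T]$ and bounded on $(0,T)$. The heuristic is the obvious one: were $u$ a classical solution, then for a.e.\ $t$ and any norming functional $u^*\in\partial\|u(t)\|$ one would get $\tfrac{d}{dt}\|u(t)\|=\langle u'(t),u^*\rangle=\langle A^cu(t)+F(t,u(t)+v(t)),u^*\rangle\le a'(1+\|v(t)\|)^N+b'\|u(t)\|\le C(1+\|v(t)\|)^N+C\|u(t)\|$ from condition~(4), and Gronwall's lemma, with $\e^{C(t-s)}\le\e^{Ct}$, would give \eqref{eq:lem44}. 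Making this rigorous is delicate precisely because $S^c$ is \emph{not} strongly continuous on $E^c$: $A^c$ need not be densely defined and $u(t)\notin D(A^c)$ in general.

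\emph{Step 1 (regularization).} Since $S^c$ is analytic and contractive, $(0,\infty)\subset\rho(A^c)$ and $\|\lambda R(\lambda,A^c)\|\le1$. For $\lambda>0$ set $u_\lambda:=\lambda R(\lambda,A^c)u$. Applying $\lambda R(\lambda,A^c)$ to \eqref{eq:lem44mildsol}, and using that it commutes with $S^c$, gives $u_\lambda(t)=S^c(t)\big(\lambda R(\lambda,A^c)x\big)+\int_0^tS^c(t-s)\big(\lambda R(\lambda,A^c)g(s)\big)\,ds$. Here $\lambda R(\lambda,A^c)x\in D(A^c)$ and, since $A^c\lambda R(\lambda,A^c)$ is bounded and $g$ is continuous on $(0,T]$, the map $s\mapsto\lambda R(\lambda,A^c)g(s)$ is continuous into $D(A^c)$ (graph norm). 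By the parabolic regularity theory for sectorial operators — which does not require a dense domain, cf.\ \cite{Lun95} — the restriction of $u_\lambda$ to any $[\varepsilon,T]$, $\varepsilon>0$, is a classical solution: $u_\lambda\in C^1([\varepsilon,T];E^c)$, $u_\lambda(t)\in D(A^c)$, and $u_\lambda'(t)=A^cu_\lambda(t)+\lambda R(\lambda,A^c)g(t)$. In particular $t\mapsto\|u_\lambda(t)\|$ is locally Lipschitz on $(0,T]$, hence differentiable a.e., with $\tfrac{d}{dt}\|u_\lambda(t)\|=\langle u_\lambda'(t),u^*\rangle$ for any $u^*\in\partial\|u_\lambda(t)\|$ at points where $u_\lambda(t)\ne0$.

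\emph{Step 2 (estimating $u_\lambda$; the main obstacle).} Fix $\varepsilon>0$. Writing $\lambda R(\lambda,A^c)=I+A^cR(\lambda,A^c)$ we get $u_\lambda'(t)=A^cz_\lambda(t)+g(t)$ with $z_\lambda(t):=u_\lambda(t)+R(\lambda,A^c)g(t)=R(\lambda,A^c)\big(\lambda u(t)+g(t)\big)\in D(A^c)$ and $\|z_\lambda(t)-u_\lambda(t)\|\le\lambda^{-1}\|g(t)\|$. Expressing $g(t)=F(t,z_\lambda(t)+(u(t)+v(t)-z_\lambda(t)))$ and applying condition~(4) with the pair $(z_\lambda(t),u(t)+v(t)-z_\lambda(t))\in D(A^c)\times E^c$ gives $\langle A^cz_\lambda(t)+g(t),u^*\rangle\le a'(1+\|u(t)+v(t)-z_\lambda(t)\|)^N+b'\|z_\lambda(t)\|$ for every $u^*\in\partial\|z_\lambda(t)\|$. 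Since we are differentiating $\|u_\lambda(t)\|$, however, we need this bound for a functional in $\partial\|u_\lambda(t)\|$, and $z_\lambda(t)-u_\lambda(t)$ is \emph{not} small in general (it is small only when $g(t)\in\overline{D(A^c)}$, which need not hold). \textbf{This is the main obstacle} and the point where the argument must depart from \cite{KvN12}, whose proof uses strong continuity of $S^c$ on $E^c=B$ to run a Yosida approximation converging on all of $E^c$. To overcome it I would: (i) observe that the mild solution itself takes values in $\overline{D(A^c)}$ for $t>0$ — indeed $S^c(t)x\in D(A^c)$ by analyticity and $\int_0^tS^c(t-s)g(s)\,ds=\lim_{\eta\downarrow0}S^c(\eta)\int_0^{t-\eta}S^c(t-\eta-s)g(s)\,ds\in\overline{D(A^c)}$ because the tail $\int_{t-\eta}^tS^c(t-s)g(s)\,ds$ has norm $\le\eta\|g\|_{L^\infty(0,T)}$ by contractivity; hence $u_\lambda(t)\to u(t)$ and $z_\lambda(t)\to u(t)$ uniformly on $[\varepsilon,T]$ as $\lambda\to\infty$; (ii) use the dissipativity of $A^c$ ($\langle A^cw,w^*\rangle\le0$ for $w\in D(A^c)$, $w^*\in\partial\|w\|$, valid since $S^c$ is a contraction semigroup, cf.\ Remark~\ref{rem:analdisscontr}) together with an upper‑semicontinuity/convexity argument for $\partial\|\cdot\|$ to pass from a functional norming $z_\lambda(t)$ to one norming $u_\lambda(t)$ up to an error $\rho_\lambda(t)$ with $\rho_\lambda\to0$ uniformly on $[\varepsilon,T]$. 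This yields, for a.e.\ $t\in(\varepsilon,T]$, $\tfrac{d}{dt}\|u_\lambda(t)\|\le a'(1+\|u(t)+v(t)-u_\lambda(t)\|)^N+b'\|u_\lambda(t)\|+\rho_\lambda(t)$.

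\emph{Step 3 (Gronwall and passage to the limit).} Integrating the last inequality on $[\varepsilon,t]$ and applying Gronwall's lemma with $C=\max\{a',b'\}$ gives $\|u_\lambda(t)\|\le\e^{C(t-\varepsilon)}\big(\|u_\lambda(\varepsilon)\|+\int_\varepsilon^tC(1+\|u(s)+v(s)-u_\lambda(s)\|)^N\,ds+(t-\varepsilon)\|\rho_\lambda\|_{L^\infty(\varepsilon,T)}\big)$. Letting $\lambda\to\infty$ and using Step~2(i), the right‑hand side tends to $\e^{C(t-\varepsilon)}\big(\|u(\varepsilon)\|+\int_\varepsilon^tC(1+\|v(s)\|)^N\,ds\big)$ and the left‑hand side to $\|u(t)\|$. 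Finally $\|u(\varepsilon)\|=\big\|S^c(\varepsilon)x+\int_0^\varepsilon S^c(\varepsilon-s)g(s)\,ds\big\|\le\|x\|+\varepsilon\|g\|_{L^\infty(0,T)}$ by contractivity, so letting $\varepsilon\downarrow0$ and using $\e^{C(t-\varepsilon)}\le\e^{Ct}$ yields $\|u(t)\|\le\e^{Ct}\big(\|x\|+\int_0^tC(1+\|v(s)\|)^N\,ds\big)$, i.e.\ \eqref{eq:lem44}. I expect Step~2 to be the hard part: making the dissipativity estimate of condition~(4) available along the regularized solution $u_\lambda$ without any strong continuity of $S^c$ on $E^c$ — equivalently, controlling the error introduced between $A^c$ and the nonlinearity $F$ by the resolvent regularization.
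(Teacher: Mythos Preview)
Your Step~2 contains a genuine gap that your proposed fix does not close. After writing $u_\lambda'=A^cz_\lambda+g$ with $z_\lambda=u_\lambda+R(\lambda,A^c)g$, you need $\langle A^cz_\lambda+g,u^*_\lambda\rangle$ for $u^*_\lambda\in\partial\|u_\lambda\|$, while condition~(4) only delivers the bound for $z^*_\lambda\in\partial\|z_\lambda\|$. The discrepancy $\langle A^cz_\lambda+g,u^*_\lambda-z^*_\lambda\rangle$ involves $A^cz_\lambda$, and $\|A^cz_\lambda\|$ is \emph{not} uniformly bounded in $\lambda$: since $A^cz_\lambda=(\lambda R(\lambda,A^c)-I)(\lambda u+g)$, the piece $\lambda(\lambda R(\lambda,A^c)-I)u(t)$ blows up like $\lambda$ unless $u(t)\in D(A^c)$. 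Upper semicontinuity of $\partial\|\cdot\|$ gives no quantitative control on $\|u^*_\lambda-z^*_\lambda\|$ in a general Banach space (in the concrete space $\mcE^c=(C[0,1])^m\times\real^n$ the duality map can jump), and even an optimistic $O(\|u_\lambda-z_\lambda\|)=O(1/\lambda)$ bound would leave an $O(1)$ error. So the asserted $\rho_\lambda\to0$ is unjustified.

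The paper circumvents this by regularizing the \emph{initial value} rather than the solution: it sets $x_\lambda=\lambda R(\lambda,A^c)x\in D(A^c)$ and solves the \emph{nonlinear} problem $\dot z=A^cz+F(t,z+v)$, $z(0)=x_\lambda$, via \cite[Thm.~7.1.3]{Lun95}, obtaining a maximal mild solution $u_\lambda$. Because $x_\lambda\in D(A^c)$ and $f_\lambda(s)=F(s,u_\lambda(s)+v(s))$ is continuous, \cite[Prop.~4.1.8]{Lun95} furnishes approximants $u_{\lambda,n}\in C^1([0,\alpha];E^c)\cap C([0,\alpha];D(A^c))$ with $u_{\lambda,n}\to u_\lambda$ and $\dot u_{\lambda,n}-A^cu_{\lambda,n}=:f_{\lambda,n}\to f_\lambda$ uniformly. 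Condition~(4) is then applied directly with the pair $(u_{\lambda,n}(t),v(t))$, so the error terms are $\langle f_\lambda-F(\cdot,u_{\lambda,n}+v),u^*\rangle+\langle f_{\lambda,n}-f_\lambda,u^*\rangle$, which involve only the (locally Lipschitz, bounded) nonlinearity $F$ and hence vanish as $n\to\infty$. Gronwall gives the desired estimate for $u_\lambda$ uniformly in $\lambda$, whence globality of $u_\lambda$ via \cite[Prop.~7.1.8]{Lun95}, and the passage $u_\lambda\to u$ follows as in \cite[Prop.~6.2.2]{Ce01b}. The key point you are missing is that by solving the full nonlinear problem with regularized data, the $A^c$-term and the $F$-term stay \emph{aligned} in the dissipativity inequality, so no unbounded operator appears in the error.
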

\begin{proof}
Let $A^c$ be the generator of $S^c$, that is a sectorial and dissipative operator (see Remark \ref{rem:analdisscontr}) and fix $v\in C([0,T]\;E^c)$ satisfying \eqref{eq:lem44mildsol}. Thus we can use methods from the proofs of \cite[Prop.~6.2.2]{Ce01b} and \cite[Lem.~5.4]{Ce03}. Taking $\la\in\rho(A^c)$, we introduce the problem on $E^c$
\begin{equation}\label{eq:pflem44la}
\dot{z}(t)=A ^cz(t)+F(t,z(t)+v(t)),\; t>a,\quad z(a)=\la R(\la,A^c)x\eqqcolon x_{\la}.
\end{equation}
First take $a=0$. Since $x_{\la}\in D(A^c)$ and $A^c$ is sectorial, by \cite[Thm.~7.1.3(i)]{Lun95} this problem has a unique local mild solution in $C([0,\delta],E^c))$ for some $\delta>0$, called $u_{\la,\delta}$, satisfying
\begin{equation}\label{eq:pflem44uladelta}
u_{\la,\delta}(t)=S^c(t)x_{\la}+\int_0^tS^c(t-s)F(s,u_{\la,\delta}(s)+v(s))\ds
\end{equation}
for $t\in[0,\delta]$

We now set $a=\delta$ and take $u_{\la,\delta}(\delta)$ instead of $x_{\la}$ in \eqref{eq:pflem44la}. Since $u_{\la,\delta}(\delta)$ satisfies \eqref{eq:pflem44uladelta} with $t=\delta$ and $S^c$ is analytic, we obtain that $u_{\la,\delta}(\delta)$ belongs to $\overline{D(A^c)}$. Hence, by \cite[Thm.~7.1.3(i)]{Lun95}, there exists $\ve>0$ and a unique local mild solution of \eqref{eq:pflem44la} in $C([\delta,\delta+\ve],E^c)$, called $u_{\la,\ve}$, satisfying
\begin{equation}\label{eq:pflem44ula2}
u_{\la,\ve}(t)=S^c(t)u_{\la,\delta}(\delta)+\int_{\delta}^tS^c(t-s)F(s,u_{\la,\ve}(s)+v(s))\ds,\quad t\in [\delta,\delta+\ve].
\end{equation} Defining
\[u_{\la,\alpha}\coloneqq
\begin{cases}
u_{\la,\delta}(t), & t\in [0,\delta],\\
u_{\la,\ve}(t), & t\in (\delta,\delta+\ve]
\end{cases}
\]
yields a solution $u_{\la,\alpha}\in C([0,\alpha],E^c)$ of \eqref{eq:pflem44la} with $a=0$. Again, $u_{\la,\alpha}(\alpha)$ can be taken as initial value for problem \eqref{eq:pflem44la} with $a=\alpha$, and the above procedure may be repeated indefinitely, up to construct a noncontinuable solution defined in a maximal time interval $I(x_{\la})$.
As in \cite[Def.~7.1.7]{Lun95} we define by $I(x_{\la})$ as the union of all the intervals $[0,\alpha]$ such that \eqref{eq:pflem44la} has a mild solution $u_{\la,\alpha}$ on this interval belonging to $C([0,\alpha],E^c)$. Denote by 
\[\tau(x_{\la})\coloneqq \sup I(x_{\la})\] 
and
\begin{equation}\label{eq:solula}
u_{\la,\max}(t)\coloneqq u_{\la,\alpha}(t), \text{ if }t\in [0,\alpha]\subset I(x_{\la})
\end{equation}
which is well defined thanks to the uniqueness part of \cite[Thm.~7.1.3(i)]{Lun95}.

In the following we first show that the desired norm estimate \eqref{eq:lem44} holds for the maximal solution $u_{\la,\max}$ on $I(x_{\la})$. At the end we will be able to prove that $I(x_{\la})=[0,T]$.

Fix now $t\in I(x_{\la})$. Then by definition, there exists $\alpha>0$ such that $t\in [0,\alpha]$ and $u_{\la,\max}(t)= u_{\la,\alpha}(t)$ holds for the mild solution $u_{\la,\alpha}\in C([0,\alpha],E^c)$ of \eqref{eq:pflem44la}. For the sake of simplicity, we denote $u_{\la}\coloneqq u_{\la,\alpha}$.

Rewriting \eqref{eq:pflem44uladelta} for $u_{\la}$ we obtain that for $t\in [0,\alpha]$ 
\begin{equation}\label{eq:pflem44ula}
u_{\la}(t)=S^c(t)x_{\la}+\int_0^tS^c(t-s)F(s,u_{\la}(s)+v(s))\ds \eqqcolon S^c(t)x_{\la}+\int_0^tS^c(t-s)f_{\la}(s)\ds,
\end{equation}
where $f_{\la}(s)=F(s,u_{\la}(s)+v(s)).$  Since $f_{\la}(\cdot)\in C([0,\alpha],E^c)$ and by definition, $u_{\la}(0)=x_{\la}\in D(A^c)$ holds, we can apply \cite[Prop.~4.1.8]{Lun95} and obtain that $u_{\la}$ is a strong solution of \eqref{eq:pflem44la} in the sense of \cite[Def.~4.1.1]{Lun95}. This means, that there exists a sequence $(u_{\la,n})\subset C^1([0,\alpha],E^c)\cap C([0,\alpha],D(A^c))$ such that
\begin{equation}\label{eq:lem44approx}
u_{\la,n}\to u_{\la},\quad \dot{u}_{\la,n}-A^cu_{\la,n}\eqqcolon f_{\la,n}\to f_{\la}\text{ in }C([0,\alpha],E^c)
\end{equation}
holds, as $n$ goes to infinity. 
Using that $u_{\la,n}\in C^1([0,\alpha],E^c)$, we have by \cite[Thm.~17.9]{HeStr75} that
\[\frac{d}{dt}\|u_{\la,n}(t)\|=\langle \dot{u}_{\la,n}(t),u_{\la,n}(t)^*\rangle,\text{ for all }u_{\la,n}(t)^*\in\partial \|u_{\la,n}(t)\|.\]
Hence, for all $u_{\la,n}(t)^*\in\partial \|u_{\la,n}(t)\|$,
\begin{align}
\frac{d}{dt}\|u_{\la,n}(t)\|&=\langle A^c u_{\la,n}(t)+f_{\la,n}(t),u_{\la,n}(t)^*\rangle\\
&=\langle A^c u_{\la,n}(t)+F(t,u_{\la,n}(t)+v(t)),u_{\la,n}(t)^*\rangle\\
&+\langle f_{\la}(t)-F(t,u_{\la,n}(t)+v(t)),u_{\la,n}(t)^*\rangle+\langle f_{\la,n}(t)-f_{\la}(t),u_{\la,n}(t)^*\rangle
\end{align}
Using the assumption on $F$, we obtain that
\begin{align}\label{eq:lem44ulanest}
\frac{d}{dt}\|u_{\la,n}(t)\|&\leq a'\left(1+\|v(t)\|\right)^N+b'\|u_{\la,n}(t)\|\notag\\
&+\|f_{\la}-F(\cdot,u_{\la,n}+v)\|_{C([0,t],E^c)}+\|f_{\la,n}-f_{\la}\|_{C([0,t],E^c)}\notag\\
&\leq C\cdot \|u_{\la,n}(t)\|+C\cdot\left(1+\|v(t)\| \right)^N+\ve_{\la,n}
\end{align}
with 
\begin{equation}\label{eq:lem44epsilon}
\ve_{\la,n}=\|f_{\la}-F(\cdot,u_{\la,n}+v)\|_{C([0,t],E^c)}+\|f_{\la,n}-f_{\la}\|_{C([0,t],E^c)}.
\end{equation}
By Gronwall's lemma from \eqref{eq:lem44ulanest} we have
\begin{equation}\label{eq:lem44ulanestGr}
\|u_{\la,n}(t)\|\leq \e^{C\cdot t}\cdot \|u_{\la,n}(0)\|+C\int_0^t\e^{C\cdot (t-s)}\left(1+\|v(s)\|\right)^N\ds+\e^{C\cdot t}\cdot t \cdot \ve_{\la,n}.
\end{equation}
Observe that by the continuity of $F$ and \eqref{eq:lem44approx}, we have that 
\begin{equation}\label{eq:lem44epsilonto0}
\ve_{\la,n}\to 0,\quad n\to\infty.
\end{equation}
Hence, letting $n\to\infty$ in \eqref{eq:lem44ulanestGr} and using \eqref{eq:lem44approx} we obtain
\begin{equation}
\|u_{\la}(t)\|\leq \e^{C\cdot t}\cdot \|x_{\la}\|+C\cdot \e^{C\cdot t}\int_0^t\left(1+\|v(s)\|\right)^N\ds.
\end{equation} 
Since $A^c$ generates a contraction semigroup, $\|x_{\la}\|\leq \|x\|$ holds, and we obtain that
\begin{equation}\label{eq:pflem44est}
\|u_{\la}(t)\|\leq \e^{C\cdot t}\cdot\left( \|x\|+\int_0^tC\cdot \left(1+\|v(s)\|\right)^N\ds\right), \qquad t\in [0,\alpha],
\end{equation}
hence
\begin{equation}\label{eq:pflem44estalphaT}
\left\|u_{\la,\alpha}(t)\right\| \leq \e^{C\cdot T}\left(\|x\|+\int_0^T C\cdot\left(1+\|v(s)\|\right)^N \ds\right),
\quad t\in [0,\alpha].
\end{equation}
Since $t\in I(x_{\la})$ was arbitrary, and the right-hand-side of \eqref{eq:pflem44estalphaT} does not depend on $t$, we have
\[\lim\sup_{t\to \tau(x_{\la})}\|u_{\la,\max}(t)\|<\infty.\]
Using \cite[Prop.~7.1.8]{Lun95} (and its corollary) it follows that the solution $u_{\la,\max}$ is also global, hence $I(x_{\la})=[0,T]$. Thus, \eqref{eq:lem44} holds for $u_{\la,\max}$ and $t\in [0,T]$.

Finally, arguing as in the last part of the proof of \cite[Prop.~6.2.2]{Ce01b}, we obtain \eqref{eq:lem44} for $u(t).$
\end{proof}

Following \cite{Ce03}, for a fixed $T>0$ and $q\geq 1$, we define the space
\begin{equation}\label{eq:V_T}
V_{T,q}\coloneqq L^q\left(\Omega;C((0,T];E^c)\cap L^{\infty}(0,T;E^c)\right)
\end{equation}
being a Banach space with norm
\begin{equation}\label{eq:V_Tnorm}
\left\|u\right\|^q_{V_{T,q}}\coloneqq \mathbb{E}\sup_{t\in [0,T]}\|u(t)\|^q,\quad u\in V_{T,q}.
\end{equation} 
This Banach space will play a crucial role for the solutions of \eqref{eq:SCP}.

\begin{theo}\label{theo:KvN4.3gen}
Let $T>0$, let $2<q<\infty$ and suppose that Assumptions \ref{assum:mainvN4.3mod} hold with 
\begin{equation}
\theta+\kF<1,\quad \theta+\kG<\frac{1}{2}-\frac{1}{Nq}.
\end{equation}
Then for all $\xi\in L^q(\Omega,\mathscr{F}_0,\mathbb{P}; E^c)$ there exists a unique global mild solution $X\in V_{T,q}$ of \eqref{eq:SCP}. Moreover, for some constant $C_{q,T}>0$ we have
\begin{equation}\label{eq:xvtfin}
\left\|X\right\|^q_{V_{T,q}}\leq C_{q,T}\left(1+\mathbb{E}\|\xi\|^q\right).
\end{equation}
\end{theo}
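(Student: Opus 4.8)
The plan is a truncation‑and‑localization argument: reduce \eqref{eq:SCP} to globally Lipschitz coefficients, solve the truncated equations by a fixed point, and then recover global existence and the bound \eqref{eq:xvtfin} from the dissipativity estimate of Lemma \ref{lem:44}. Since $A$ is sectorial we may assume $\omega=0$ (Remark \ref{rem:omega0}), so that $S^c$ is an analytic contraction semigroup on $E^c$ and Lemma \ref{lem:44} is available. First I would truncate: fixing a Lipschitz cut‑off $\chi_n\colon E^c\to[0,1]$ with $\chi_n\equiv 1$ on $\{\|u\|\le n\}$ and $\chi_n\equiv 0$ on $\{\|u\|\ge 2n\}$, set $F_n\coloneqq\chi_n F$ and $G_n\coloneqq\chi_n G$. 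These are globally Lipschitz and bounded in the relevant norms, $\widetilde F$ is already globally Lipschitz, and $F_n$ still satisfies condition (4) of Assumptions \ref{assum:mainvN4.3mod} with the \emph{same} $a',b'$, because
\[
\langle A u+F_n(t,u+v),u^*\rangle=(1-\chi_n(u+v))\langle A u,u^*\rangle+\chi_n(u+v)\langle A u+F(t,u+v),u^*\rangle\le a'(1+\|v\|)^N+b'\|u\|,
\]
using $\langle A u,u^*\rangle\le 0$ (dissipativity of $A^c$) and $0\le\chi_n\le 1$ together with $a'(1+\|v\|)^N+b'\|u\|\ge 0$.

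Next I would solve \eqref{eq:SCP} with $F_n,G_n,\widetilde F$ by Banach's fixed point theorem for
\[
\Lambda_n(Y)(t)\coloneqq S(t)\xi+S\ast F_n(\cdot,Y)(t)+S\ast\widetilde F(\cdot,Y)(t)+S\diamond G_n(\cdot,Y)(t)
\]
on $V_{T,q}$ with an exponentially weighted equivalent norm. Global Lipschitz continuity of $F_n,\widetilde F,G_n$ makes $\Lambda_n$ a strict contraction; the new point compared with \cite{KvN12} is that $\Lambda_n$ maps $V_{T,q}$ into itself must be checked using \emph{analyticity} of $S^c$ in place of strong continuity: $t\mapsto S(t)\xi$ is continuous and bounded on $(0,T]$; $S\ast g\in C([0,T];E^c)$ whenever $g\in C((0,T];E^c)\cap L^\infty(0,T;E^c)$, by dominated convergence and strong continuity of $S^c$ on $(0,\infty)$; $S\ast\widetilde F(\cdot,Y)$ is controlled in $C((0,T];E^c)$ through the integrable kernel $(t-s)^{-(\theta+\kF)}$ (here $\theta+\kF<1$) and the linear growth \eqref{eq:Ftildegrow}; and $S\diamond G_n(\cdot,Y)$ admits a version in $C((0,T];E^c)\cap L^q(\Omega;L^\infty(0,T;E^c))$ by the factorization method, using that $E$ is UMD of type $2$, the $\gamma$‑radonifying estimates of \cite{vNVW08}, and $\theta+\kG<\tfrac12-\tfrac1{Nq}$. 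This yields a unique $X_n\in V_{T,q}$. I would then localize: for $n\le m$ set $\tau_n\coloneqq\inf\{t\in[0,T]\colon\|X_n(t)\|\ge n\}$ ($\tau_n=T$ if this set is empty); since $F_n=F_m$ and $G_n=G_m$ on $\{\|u\|\le n\}$, the usual comparison of mild solutions up to $\tau_n$ gives $X_n=X_m$ on $[0,\tau_n]$, so $(\tau_n)$ is nondecreasing, and with $\tau\coloneqq\sup_n\tau_n$ and $X\coloneqq X_n$ on $[0,\tau_n]$ one obtains a process on $[0,\tau)$ solving \eqref{eq:mildsol} there (the truncations being inactive).

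The core is an $n$‑uniform a priori estimate. Decompose $X_n=u_n+v_n$ with $v_n\coloneqq S\ast\widetilde F(\cdot,X_n)+S\diamond G_n(\cdot,X_n)$ and $u_n\coloneqq S(\cdot)\xi+S\ast F_n(\cdot,X_n)$, so that $u_n(t)=S^c(t)\xi+\int_0^tS^c(t-s)F_n(s,u_n(s)+v_n(s))\ds$ with $u_n\in C((0,T];E^c)\cap L^\infty(0,T;E^c)$ and $v_n\in C([0,T];E^c)$ almost surely; Lemma \ref{lem:44} (applied pathwise, $F_n$ satisfying (4) by the first paragraph) then gives $\|u_n(t)\|\le\e^{Ct}\big(\|\xi\|+\int_0^tC(1+\|v_n(s)\|)^N\ds\big)$. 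The crucial observation is that the stochastic‑convolution part of $v_n$, once raised to the $N$‑th power above, has only \emph{linear} growth in $X_n$: combining the growth bound \eqref{eq:ggrow} of exponent $\tfrac1N$, type $2$, and the factorization estimate with $\theta+\kG<\tfrac12-\tfrac1{Nq}$ one obtains $\big\|\,1+\sup_{s\le T}\|S\diamond G_n(\cdot,X_n)(s)\|\,\big\|_{L^{Nq}(\Omega)}^{N}\le C_{q,T}\big(1+\|X_n\|_{V_{T,q}}\big)$ — this is precisely the interplay that lets the scheme close, the $N$‑th power produced by the dissipativity estimate being absorbed by the $\tfrac1N$‑growth of $G$; the lower‑order term $S\ast\widetilde F(\cdot,X_n)$ is handled as a globally Lipschitz, linear‑growth perturbation via the $\theta+\kF<1$ kernel. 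Plugging these into the inequality for $u_n$, taking $q$‑th moments, using Young's inequality in the time variable, and absorbing for $T$ small (then iterating over consecutive subintervals of $[0,T]$), one arrives at
\[
\|X_n\|_{V_{T,q}}^{q}\le C_{q,T}\big(1+\mathbb{E}\|\xi\|^{q}\big),\qquad C_{q,T}\text{ independent of }n .
\]
This bound forces $\tau=T$ almost surely: on $\{\tau<T\}$ continuity gives $\|X_n(\tau_n)\|=n$ for all $n$, whence $\sup_{t<\tau}\|X(t)\|=\infty$ there, contradicting $\sup_n\|X_n\|_{V_{T,q}}<\infty$ on a set of positive probability. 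Hence $X\in V_{T,q}$, it satisfies \eqref{eq:mildsol} and inherits \eqref{eq:xvtfin}; uniqueness of the global mild solution in $V_{T,q}$ follows from the uniqueness for the truncated equations together with the localization.

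I expect the main obstacle to be this $n$‑uniform a priori estimate: one has to carry out the book‑keeping showing that, under the precise exponent relations $\theta+\kF<1$ and $\theta+\kG<\tfrac12-\tfrac1{Nq}$, the nonlinear (degree‑$N$) bound coming out of Lemma \ref{lem:44} closes uniformly in the truncation parameter, the delicate step being the cancellation of the $N$‑th power against the $\tfrac1N$‑growth of $G$ and the correct treatment of $\widetilde F$. A secondary, pervasive technicality is that $S^c$ is merely analytic, not strongly continuous, on $E^c$, so the path‑regularity claims used for $\Lambda_n$ and the approximation argument behind Lemma \ref{lem:44} must be run on analyticity alone rather than on the Yosida‑type approximations of \cite{KvN12}.
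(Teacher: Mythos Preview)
Your proposal follows essentially the same strategy as the paper's proof: truncate the nonlinearities, obtain approximate solutions $X_n$, derive an $n$-uniform a~priori estimate via Lemma~\ref{lem:44}, and pass to the limit. The differences are minor. The paper truncates only $F$ (by radial projection, $F_n(t,\omega,u)=F(t,\omega,nu/\|u\|)$ for $\|u\|>n$) and keeps the original $G$, invoking the argument of \cite[Prop.~3.8(1)]{KvN12} for existence of $X_n$; you truncate both $F$ and $G$ by a cutoff $\chi_n$, which makes the fixed-point step for $X_n$ entirely elementary at the price of a second localization parameter. The paper's decomposition for Lemma~\ref{lem:44} is $u_n=X_n-S\diamond G(\cdot,X_n)$, $v_n=S\diamond G(\cdot,X_n)$, with $S\ast\widetilde F$ split off by the triangle inequality before the lemma is applied; you put $S\ast\widetilde F$ directly into $v_n$ from the outset---these lead to the same bookkeeping. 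For the limit passage the paper cites \cite[Thm.~5.3]{Ce03} rather than spelling out the stopping-time argument you describe, but the two are equivalent. Your explicit verification that $\chi_n F$ still satisfies condition~(4) of Assumptions~\ref{assum:mainvN4.3mod} via dissipativity of $A^c$ is a detail the paper leaves to \cite{KvN12}.
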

\begin{proof}
We only sketch a proof as it is analogous to the proofs of \cite[Thm.~4.3]{KvN12} and  \cite[Thm.~5.3]{Ce03} with highlighting the necessary changes. We set
\begin{equation}\label{eq:F_n}
F_n(t,\omega,u)\coloneqq\begin{cases}
F(t,\omega,u),& \text{ if }\|u\|\leq n,\\
F(t,\omega,\frac{nu}{\|u\|}),&\text{ otherwise.}
\end{cases}
\end{equation}
We argue in the same way as in the proof of \cite[Prop.~3.8(1)]{KvN12} which uses implicitly that, according to Assumption \ref{assum:mainvN4.3mod}(1), the Banach space $E$ is of type $2$, see also \cite[p.~978]{vNVW08}. The solution space will be $V_{T,q}$ defined in \eqref{eq:V_T} instead of $L^q\left(\Omega;C([0,T];E^c)\right)$ and $F_n+\widetilde{F}$ instead of $F_n$, we obtain that for each $n$ there exists a mild solution $X_n\in V_{T,q}$ of the problem \eqref{eq:SCP} with $F_n$ instead of $F$ (see also the proof of \cite[Thm.~5.3]{Ce03}). 
The mild solution $X_n$ satisfies, for all $t\in [0,T]$,  the integral equation
\begin{equation}\label{eq:mildsolXn}
X_n(t)=S(t)\xi+S\ast F_n(\cdot,X_n(\cdot))(t)+S\ast \widetilde{F}(\cdot,X_n(\cdot))(t)+S\diamond G(\cdot,X_n(\cdot))(t),
\end{equation}
almost surely.

We proceed in a completely analogous fashion as in the proof of \cite[Thm.~4.3]{KvN12}, with $E^c$ instead of $B$. Instead of \cite[Lem.~4.4]{KvN12} we use Lemma \ref{lem:44} with
\[u_n=X_n-S\diamond G(\cdot,X_n(\cdot)),\quad v_n=S\diamond G(\cdot,X_n(\cdot))\]
and obtain that
\begin{align}
\|u_n\|_{V_{T,q}}^q&=\|S(\cdot)\xi+S\ast F(\cdot,X_n(\cdot))+S\ast \widetilde{F}(\cdot,X_n(\cdot))\|_{V_{T,q}}^q\notag\\
&\lesssim\mathbb{E}\sup_{t\in [0,T]}\|S\ast \widetilde{F}(\cdot,X_n(\cdot))\|^q+\mathbb{E}\sup_{t\in [0,T]}\|S(t)\xi+\int_0^tS(t-s)F(s,u_n(s)+v_n(s))\ds\|^q\notag\\
&\lesssim \mathbb{E}\|S\ast \widetilde{F}(\cdot,X_n(\cdot))\|^{q}_{L^{\infty}(0,T;E^c)}\notag\\
&+C_{q,T}\left(1+\mathbb{E}\|\xi\|^q+\mathbb{E}\|S\diamond G(\cdot,X_n(\cdot))\|^{Nq}_{L^{\infty}(0,T;E^c)}\right).
\end{align}
Using that by Assumptions \ref{assum:mainvN4.3mod}(2) $E^{\theta}\hookrightarrow E^c$ holds, it follows from \cite[Lem.~3.6]{vNVW08} with $\alpha=1$, $\lambda=0$, $\eta=\theta$, $\theta=\kF$ that $S\ast \widetilde{F}(\cdot,X_n(\cdot))\in C([0,T],E^c)$ is satisfied and
\begin{equation}\label{eq:ScsillagFhullam}
\|S\ast \widetilde{F}(\cdot,X_n(\cdot))\|_{C([0,T],E^c)}\leq C(T)\cdot \|\widetilde{F}(\cdot,X_n(\cdot))\|_{L^{\infty}(0,T;E^{-\kF})}
\end{equation}
with $C(T)\to 0$ if $T\downarrow 0.$ Using \eqref{eq:Ftildegrow} and proceeding as in the proof of \cite[Thm.~4.3]{KvN12}, we obtain that for each $T>0$ there exists a constant $C_{q,T}>0$ such that 
\begin{equation}\label{eq:estimateXn}
\left\|X_n\right\|^q_{V_{T,q}}\leq C_{q,T}\left(1+\mathbb{E}\|\xi\|^q\right).
\end{equation}
We remark that the estimates needed use only the continuity of the embedding $E^{\theta}\hookrightarrow E^c.$
Once \eqref{eq:estimateXn} has been established we can conclude, the same way as in the proof of \cite[Thm.~5.3]{Ce03} the existence and uniqeness of a process 
$X\in L^q\left(\Omega;L^\infty(0,T;E^c\right))$ with
\begin{equation}\label{eq:xlqn}
\left\|X\right\|_{L^q\left(\Omega;L^\infty(0,T;E^c\right))}\leq C_{q,T}\left(1+\mathbb{E}\|\xi\|^q\right)
\end{equation}
such that for $t\in [0;T]$,
\begin{equation}\label{eq:mildsolX}
X(t)=S(t)\xi+S\ast F(\cdot,X(\cdot))(t)+S\ast \widetilde{F}(\cdot,X(\cdot))(t)+S\diamond G(\cdot,X(\cdot))(t),
\end{equation}
almost surely and thus $X$ is the unique mild solution of \eqref{eq:SCP} in $L^q\left(\Omega;L^\infty(0,T;E^c\right))$.

To prove the continuity of the trajectories of $X$ we first note that the analyticity of $S$ on $E^c$ immediately implies that $(0,T]\ni t\to S(t)\xi\in E^c$ is continuous. 
Next, we use \cite[Pro.~4.2.1]{Lun95} and the continuity of $F$ to obtain that
\[[0,T]\ni t\mapsto S\ast F(\cdot,X(\cdot))(t)\in E^c\]
is continuous. Using \cite[Lem.~3.6]{vNVW08} as above, we obtain that $S\ast \widetilde{F}(\cdot,X(\cdot))\in C([0,T],E^c)$ holds.
Applying analogous estimates as in the proof of \cite[Thm.~4.3]{KvN12}, we may conclude that that there exists $C(T)>0$ such that
\begin{equation}
\mathbb{E}\left\|S\diamond G(\cdot,X(\cdot))\right\|_{C([0,T];E^c)}^{Nq}\leq C(T)\left(1+\left\|X\right\|_{L^q\left(\Omega;L^\infty(0,T;E^c\right))}^q\right),
\end{equation}
hence it follows that
\[[0,T]\ni t\mapsto S\diamond G(\cdot,X(\cdot))(t)\in E^c\]
is continuous almost surely. Thus, by \eqref{eq:mildsolX} and the already established fact that $X\in L^q\left(\Omega;L^\infty(0,T;E^c\right))$, we obtain that $X\in V_{T,q}$ and by \eqref{eq:xlqn} the estimate \eqref{eq:xvtfin} holds.
\end{proof}

For our next result, introduce the following set of assumptions on the operators in (SCP). 
\begin{assum}$ $\label{assum:mainvN4.9mod}
\begin{enumerate}
	\item Let $E$ be a UMD Banach space of type $2$ and $(A, D(A))$ a densely defined, closed and sectorial operator on $E$.
	\item We have continuous (but not necessarily dense) embeddings for some $\theta\in (0,1)$ \[E^{\theta}\hookrightarrow E^c\hookrightarrow E.\]
	\item The strongly continuous analytic semigroup $S$ generated by $(A, D(A))$ on $E$ restricts to an analytic, contractive semigroup, denoted by $S^{c}$ on $E^c$, with generator $(A^c,D(A^c))$.
	\item The map $F\colon [0,T]\times\Omega\times E^c\to E^c$ is continuous in the first variable and locally Lipschitz continuous in the third variable in the sense that for all $r>0$, there exists a constant $L_{F}^{(r)}$ such that
	      \[\left\|F(t,\omega,u)-F(t,\omega,v)\right\|\leq L_{F}^{(r)}\|u-v\|\]
	      for all $\|u\|,\|v\|\leq r$ and $(t,\omega)\in [0,T]\times \Omega$ and there exists a constant $C_{F,0}\geq 0$ such that
				\[\left\|F(t,\omega,0)\right\|\leq C_{F,0},\quad t\in[0,T],\; \omega\in\Omega.\]
				Moreover, for all $u\in E^c$ the map $(t,\omega)\mapsto F(t,\omega,u)$ is strongly measurable and adapted.\\
				Finally, for suitable constants $a',b'\geq 0$ and $N\geq 1$ we have
				\[\langle A u+F (t,u+v), u^*\rangle\leq a'(1+\|v\|)^N+b'\|u\|\]
				for all $u\in D(A|_{E^c})$, $v\in E^c$ and $u^*\in\partial\|u\|,$ see \eqref{eq:subdiff}.
	\item There exist constants $a'',\, b'',\, k,\, K>0$ with $K\geq k$ such that the function $F\colon [0,T]\times\Omega\times E^c\to E^c$ satisfies
	      \begin{equation}\label{eq:assymF}
				\langle F (t,\omega, u+v)-F (t,\omega, v), u^*\rangle\leq a''(1+\|v\|)^K-b''\|u\|^k
				\end{equation}
				for all $t\in[0,T]$, $\omega\in\Omega$, $u,v\in E^c$ and $u^*\in\partial\|u\|,$ and
				\[\left\|F(t,v)\right\|\leq a''(1+\|v\|)^K\]
				for all $v\in E^c.$
	\item For some constant $\kF\geq 0$, the map $\widetilde{F}\colon [0,T]\times\Omega\times E^c\to E^{-\kF}$ is globally Lipschitz continuous in the third variable, uniformly with respect to the first and second variables.
				Moreover, for all $u\in E^c$ the map $(t,\omega)\mapsto \widetilde{F}(t,\omega,u)$ is strongly measurable and adapted.\\
				Finally, for some $d'\geq 0$ we have
				\begin{equation}\label{eq:Ftildegrow2}
				\left\|\widetilde{F}(t,\omega,u)\right\|_{E^{-\kF}}\leq d'\left(1+\|u\|\right)
				\end{equation}
				for all $(t,\omega,u)\in [0,T]\times \Omega\times E^c.$
	\item Let $\gamma(H,E^{-\kG})$ denote the space of $\gamma$-radonifying operators from $H$ to $E^{-\kG}$ for some $0\leq \kG<\frac{1}{2}$, see e.g. \cite[Sec.~3.1]{KvN12}.
	      Then the map $G\colon [0,T]\times\Omega\times E^c\to \gamma(H,E^{-\kG})$ is locally Lipschitz continuous in the sense that for all $r>0$, there exists a constant $L_{G}^{(r)}$ such that
	      \[\left\|G(t,\omega,u)-G(t,\omega,v)\right\|_{\gamma(H,E^{-\kG})}\leq L_{G}^{(r)}\|u-v\|\]
	      for all $\|u\|,\|v\|\leq r$ and $(t,\omega)\in [0,T]\times \Omega$. 
				Moreover, for all $u\in E^c$ and $h\in H$ the map $(t,\omega)\mapsto G(t,\omega,u)h$ is strongly measurable and adapted.\\
				Finally, for suitable constant $c',$
				\[\left\|G(t,\omega,u)\right\|_{\gamma(H,E^{-\kG})}\leq c'\left(1+\|u\|\right)^{\frac{k}{K}}\] 
				for all $(t,\omega,u)\in [0,T]\times \Omega\times E^c.$
\end{enumerate}
\end{assum}

\begin{rem}
Assumptions \ref{assum:mainvN4.9mod}(1)--(5) and (7) are -- in the first 3 cases slightly modified versions of -- Assumptions (A1), (A5), (A4), (F') and (G') in \cite{KvN12}. Assumption (6) is the assumption of \cite[Prop.~3.8]{KvN12} on $\widetilde{F}$. The main difference, besides the lack of strong continuity of $S$ on $E^c$ and that the embedding $E^{\theta}\hookrightarrow E^c$ is not necessarily dense, is that instead of (F'') we impose a possibly asymmetric growth condition \eqref{eq:assymF} on $F$. This is necessary so that later when we apply the abstract theory to \eqref{eq:stochnet} we may consider polynomial reaction terms with different degrees on different edges of the graph. The growth condition on $G$ in Assumption \ref{assum:mainvN4.9mod}(7) is also different from the linear growth condition on $G$ in (G'') of \cite{KvN12} as it reflects the possibly asymmetric  growth condition on $F$. It becomes a linear growth condition when $k=K$.
\end{rem}

The following result is analogous to the statement of \cite[Lem.~4.8]{KvN12} but with the semigroup $S^c$ being an analytic contraction semigroup on $E^c$ which is not necessarily strongly continuous and with the asymmetric growth condition \eqref{eq:assymF} on $F$. Again, the main difference in the proof is the use of a different approximation argument as the Yosida approximation argument in \cite{KvN12} uses the strong continuity of $S^c$ on $E^c$ (the latter denoted by $B$ there) in a crucial manner.

\begin{lemma}\label{lem:48}
Let $S^c$ be an analytic contraction semigroup on $E^c$. Let $x\in E^c$ and $F\colon [0,T]\times\Omega\times E^c\to E^c$ satisfy condition (4) and (5) of Assumptions \ref{assum:mainvN4.9mod}. Assume that $u\in C((0,T];E^c)\cap L^{\infty}(0,T;E^c)$ and $v\in C([0,T]\;E^c)$  satisfy
\begin{equation}\label{eq:lem48mildsol}
u(t)=S^c(t)x+\int_0^tS^c(t-s)F(s,u(s)+v(s))\ds,\quad \forall t\in[0,T].
\end{equation}
Then
\begin{equation}\label{eq:lem48}
\sup_{t\in[0,T]}\left\|u(t)\right\|\leq \|x\|+c\left(1+\sup_{t\in[0,T]}\|v(t)\|\right)^{\frac{K}{k}}
\end{equation}
with $c=\left(\frac{4a''}{b''}\right)^{\frac{1}{k}}$
\end{lemma}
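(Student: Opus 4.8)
The strategy mirrors that of Lemma \ref{lem:44}: regularize the initial datum, invoke Lunford-type local existence in $C([0,\alpha],E^c)$ for the parabolic problem, differentiate the norm along the approximating strong solutions, and exploit the dissipativity estimate \eqref{eq:assymF} of Assumptions \ref{assum:mainvN4.9mod}(5) to derive an \emph{a priori} bound that is automatically global. First I would fix $v\in C([0,T];E^c)$ satisfying \eqref{eq:lem48mildsol} and, taking $\la\in\rho(A^c)$, consider the regularized problem
\begin{equation*}
\dot{z}(t)=A^cz(t)+F(t,z(t)+v(t)),\quad t>0,\qquad z(0)=\la R(\la,A^c)x=:x_\la,
\end{equation*}
exactly as in \eqref{eq:pflem44la}. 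Since $x_\la\in D(A^c)$ and $A^c$ is sectorial (Remark \ref{rem:analdisscontr}), \cite[Thm.~7.1.3(i)]{Lun95} gives a unique noncontinuable mild solution $u_{\la,\max}$ on a maximal interval $I(x_\la)$, and — as in Lemma \ref{lem:44} — since $u_\la(0)\in D(A^c)$ and $F(\cdot,u_\la(\cdot)+v(\cdot))\in C([0,\alpha],E^c)$, \cite[Prop.~4.1.8]{Lun95} upgrades $u_\la=u_{\la,\alpha}$ to a strong solution: there is $(u_{\la,n})\subset C^1([0,\alpha],E^c)\cap C([0,\alpha],D(A^c))$ with $u_{\la,n}\to u_\la$ and $\dot u_{\la,n}-A^cu_{\la,n}=:f_{\la,n}\to F(\cdot,u_\la(\cdot)+v(\cdot))$ in $C([0,\alpha],E^c)$.

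\textbf{The differential inequality.} Along $u_{\la,n}$ I would use \cite[Thm.~17.9]{HeStr75} to write, for $u_{\la,n}(t)^*\in\partial\|u_{\la,n}(t)\|$,
\begin{equation*}
\tfrac{d}{dt}\|u_{\la,n}(t)\|=\langle A^cu_{\la,n}(t)+f_{\la,n}(t),u_{\la,n}(t)^*\rangle.
\end{equation*}
Now the key difference from Lemma \ref{lem:44}: I split $f_{\la,n}$ by adding and subtracting $F(t,u_{\la,n}(t)+v(t))$ and then $F(t,v(t))$. Since $A^c$ is dissipative, $\langle A^cu_{\la,n}(t),u_{\la,n}(t)^*\rangle\le0$; dissipativity also gives $\langle F(t,v(t)),u_{\la,n}(t)^*\rangle\le\|F(t,v(t))\|\le a''(1+\|v(t)\|)^K$ by the second bound in \eqref{eq:assymF}; and the asymmetric estimate \eqref{eq:assymF} itself gives $\langle F(t,u_{\la,n}(t)+v(t))-F(t,v(t)),u_{\la,n}(t)^*\rangle\le a''(1+\|v(t)\|)^K-b''\|u_{\la,n}(t)\|^k$. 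Adding an error term $\ve_{\la,n}:=\|f_{\la,n}-F(\cdot,u_\la+v)\|_{C([0,\alpha],E^c)}+\|F(\cdot,u_\la+v)-F(\cdot,u_{\la,n}+v)\|_{C([0,\alpha],E^c)}\to0$ (the second piece by local Lipschitz continuity of $F$ and $u_{\la,n}\to u_\la$), I obtain
\begin{equation*}
\tfrac{d}{dt}\|u_{\la,n}(t)\|\le 2a''\bigl(1+\|v(t)\|\bigr)^K-b''\|u_{\la,n}(t)\|^k+\ve_{\la,n}.
\end{equation*}

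\textbf{From the differential inequality to the bound.} Set $R:=\sup_{[0,T]}\|v(t)\|$ and let $\rho>0$ be the level at which the right side (with $\ve_{\la,n}$ absorbed, say $\ve_{\la,n}\le a''(1+R)^K$ for $n$ large) becomes nonpositive, i.e.\ $b''\rho^k=4a''(1+R)^K$, giving $\rho=\bigl(\tfrac{4a''}{b''}\bigr)^{1/k}(1+R)^{K/k}=:c(1+R)^{K/k}$. The point is the standard comparison argument: whenever $\|u_{\la,n}(t)\|>\rho$ the derivative $\tfrac{d}{dt}\|u_{\la,n}(t)\|<0$, so $\|u_{\la,n}(t)\|$ can never exceed $\max\{\|u_{\la,n}(0)\|,\rho\}\le\|x_\la\|+\rho\le\|x\|+c(1+R)^{K/k}$ (using $\|x_\la\|\le\|x\|$ since $A^c$ generates a contraction semigroup, and the elementary fact that a continuous function whose one-sided upper derivative is negative above a threshold stays below $\max$ of its start value and the threshold). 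Since this bound is independent of $\alpha$, $\limsup_{t\to\tau(x_\la)}\|u_{\la,\max}(t)\|<\infty$, so by \cite[Prop.~7.1.8]{Lun95} the solution is global, $I(x_\la)=[0,T]$, and the bound holds on $[0,T]$ for $u_{\la,\max}$; letting $n\to\infty$ and then $\la\to\infty$ (arguing as at the end of \cite[Prop.~6.2.2]{Ce01b} to pass from $u_\la$ to the given $u$ solving \eqref{eq:lem48mildsol}) yields \eqref{eq:lem48} with $c=(4a''/b'')^{1/k}$.

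\textbf{Main obstacle.} The nontrivial point is the comparison step turning the \emph{sublinear-in-a-negative-power} differential inequality into the stated supremum bound — in Lemma \ref{lem:44} one simply applied Gronwall, but here the dissipative term $-b''\|u\|^k$ must be used to trap the trajectory below the equilibrium level $\rho$ rather than letting it grow exponentially. One must be slightly careful that this comparison argument is applied to the differentiable approximants $u_{\la,n}$ (where the chain rule for the norm is valid) and only afterwards passed to the limit, and that the $\ve_{\la,n}$ error is handled \emph{before} taking the limit (e.g.\ by absorbing it into the constant for $n$ large, which only costs replacing $2a''$ by $3a''$ and hence does not affect the final constant after the limit since $\ve_{\la,n}\to0$). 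Everything else — local existence, the strong-solution approximation, $\|x_\la\|\le\|x\|$, globality via the a priori bound, and the final passage $u_\la\to u$ — is routine and identical in structure to the proof of Lemma \ref{lem:44}.
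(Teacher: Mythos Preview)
Your proposal is correct and follows essentially the same route as the paper's proof: regularize the initial datum via $x_\la=\la R(\la,A^c)x$, invoke Lunardi's local existence and strong-solution approximation, differentiate the norm along $u_{\la,n}$, split using dissipativity of $A^c$ together with both parts of Assumption \ref{assum:mainvN4.9mod}(5) to obtain the inequality $\frac{d}{dt}\|u_{\la,n}(t)\|\le 2a''(1+R)^K-b''\|u_{\la,n}(t)\|^k+\ve_{\la,n}$, and then pass to the limits $n\to\infty$ and $\la\to\infty$ via \cite[Prop.~6.2.2]{Ce01b}.

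The only difference is in how the scalar differential inequality is turned into the sup bound. The paper argues by contradiction: assuming $\varphi(t_0)>\|x_\varphi\|+(2/b'')^{1/k}\gamma$, it introduces the auxiliary ODE $\psi'=-b''\psi^k+\gamma^k$ with $\psi(t_0)=\varphi(t_0)-\|x_\varphi\|$, applies the comparison principle \cite[Cor.~4.7]{KvN12} backwards, and derives a contradiction at $t=0$. Your direct ``trapping'' argument (the derivative is negative above the equilibrium level $\rho$, so $\varphi\le\max\{\varphi(0),\rho\}$) is a cleaner way to reach the same conclusion with the same constant $c=(4a''/b'')^{1/k}$, and your handling of $\ve_{\la,n}$ by requiring $\ve_{\la,n}\le 2a''(1+R)^K$ for $n$ large is equivalent to the paper's incorporation of $\ve_{\la,n}$ into $\gamma$ before letting $n\to\infty$.
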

\begin{proof}
We will proceed similarily as in Lemma \ref{lem:44}.
We denote by $A^c$ the generator of $S^c$, being sectorial and dissipative (see Remark \ref{rem:analdisscontr}) and fix $v\in C([0,T]\;E^c)$ satisfying \eqref{eq:lem48mildsol}. Hence we can take $\la\in\rho(A^c)$, and introduce the problem on $E^c$
\begin{equation}\label{eq:pflem48la}
\dot{z}(t)=A^c z(t)+F(t,z(t)+v(t)),\quad z(0)=\la R(\la,A^c)x\eqqcolon x_{\la}.
\end{equation}
As we have showed in the proof of Lemma \ref{lem:44}, there exists a unique global solution $u_{\la}\in C([0,T],E^c))$ of \eqref{eq:pflem48la}, satisfying
\begin{equation}\label{eq:pflem48ula}
u_{\la}(t)=S^c(t)x_{\la}+\int_0^tS^c(t-s)F(s,u_{\la}(s)+v(s))\ds\eqqcolon S^c(t)x_{\la}+\int_0^tS^c(t-s)f_{\la}(s)\ds,
\end{equation}
where $f_{\la}(s)=F(s,u_{\la}(s)+v(s)).$ Since $f_{\la}(\cdot)\in C([0,T],E^c)$ and by definition, $u_{\la}(0)=x_{\la}\in D(A^c)$ holds, we can apply \cite[Prop.~4.1.8]{Lun95} and obtain that $u_{\la}$ is a strong solution of \eqref{eq:pflem48la} in the sense of \cite[Def.~4.1.1]{Lun95}. This means, that there exists a sequence $(u_{\la,n})\subset C^1([0,T],E^c)\cap C([0,T],D(A^c))$ such that
\begin{equation}\label{eq:lem48approx}
u_{\la,n}\to u_{\la},\quad \dot{u}_{\la,n}-A^c u_{\la,n}\eqqcolon f_{\la,n}\to f_{\la}\text{ in }C([0,T],E^c)
\end{equation}
holds, as $n$ goes to infinity. 
Using that $u_{\la,n}\in C^1([0,T],E^c)$, we have by \cite[Thm.~17.9]{HeStr75} that
\[\frac{d}{dt}\|u_{\la,n}(t)\|=\langle \dot{u}_{\la,n}(t),u_{\la,n}(t)^*\rangle,\text{ for all }u_{\la,n}(t)^*\in\partial \|u_{\la,n}(t)\|.\]
Hence, for all $u_{\la,n}(t)^*\in\partial \|u_{\la,n}(t)\|$,
\begin{align}
\frac{d}{dt}\|u_{\la,n}(t)\|&=\langle A^c u_{\la,n}(t)+f_{\la,n}(t),u_{\la,n}(t)^*\rangle\\
&=\langle A^c u_{\la,n}(t),u_{\la,n}(t)^*\rangle\\
&+\langle F(t,u_{\la,n}(t)+v(t))-F(t,v(t)),u_{\la,n}(t)^*\rangle+\langle F(t,v(t)),u_{\la,n}(t)^*\rangle\\
&+\langle f_{\la}(t)-F(t,u_{\la,n}(t)+v(t)),u_{\la,n}(t)^*\rangle+\langle f_{\la,n}(t)-f_{\la}(t),u_{\la,n}(t)^*\rangle
\end{align}
Using now the dissipativity of $A^c$ (see Remark \ref{rem:analdisscontr}) and the assumptions on $F$, we obtain that
\begin{align}\label{eq:lem48ulanest}
\frac{d}{dt}\|u_{\la,n}(t)\|&\leq 2a''\left(1+\|v(t)\|\right)^K-b''\|u_{\la,n}(t)\|^k\notag\\
&+\|f_{\la}-F(\cdot,u_{\la,n}+v)\|_{C([0,t],E^c)}+\|f_{\la,n}-f_{\la}\|_{C([0,t],E^c)}\notag\\
&\leq 2a''\left(1+\sup_{s\in[0,T]}\|v(t) \|\right)^K-b''\|u_{\la,n}(t)\|^k+\ve_{\la,n}
\end{align}
with 
\begin{equation}\label{eq:lem48epsilon}
\ve_{\la,n}=\|f_{\la}-F(\cdot,u_{\la,n}+v)\|_{C([0,t],E^c)}+\|f_{\la,n}-f_{\la}\|_{C([0,t],E^c)}.
\end{equation}
By the continuity of $F$ and \eqref{eq:lem48approx}, we have that 
\begin{equation}\label{eq:eq:lem48epsilonto0}
\ve_{\la,n}\to 0,\quad n\to\infty.
\end{equation}\smallskip

We now fix $n\in\nat$ and define $\varphi(t)\coloneqq \|u_{\la,n}(t)\|$ and 
\[\gamma\coloneqq \left(2a''\left(1+\sup_{s\in[0,T]}\|v(s)\|\right)^{K}+\ve_{\la,n}\right)^{\frac{1}{k}}.\] 
Then $\varphi$ is absolutely continuous and by \eqref{eq:lem48ulanest}
\[\varphi'(t)\leq -b''\varphi(t)^k+\gamma^k\]
holds almost everywhere. We will prove that
\begin{equation}\label{eq:lem48ulan}
\varphi(t)\leq \|x_{\varphi}\|+\left(\frac{2}{b''}\right)^{\frac{1}{k}}\cdot \gamma,\quad t\in[0,T],
\end{equation}
where $x_{\varphi}=u_{\la,n}(0)$. Assume to the contrary that for some $t_0\in [0,T]$
\begin{equation}\label{eq:pflem48contr}
\varphi(t_0)> \|x_{\varphi}\|+ \left(\frac{2}{b''}\right)^{\frac{1}{k}}\cdot \gamma.
\end{equation}
Since $\varphi(0)=\|x_{\varphi}\|$, we have that $t_0\in (0,\delta]$. 
Let $\psi:I\to\real$ be the unique maximal solution of
\[\begin{cases}
\psi'(t)&=-b''\psi(t)^k+\gamma^k,\\
\psi(t_0)&=\varphi(t_0)-\|x_{\varphi}\|.
\end{cases}
\]
We can use \cite[Cor.~4.7]{KvN12} with $u^+(t)=\psi(t)$, $u^-(t)=\varphi(t)$, 
\[f(t,u)=-b''u^k+\gamma^k\] 
and the assumption $u^+(t_0)+\|x_{\varphi}\|\leq u^-(t_0)$. This implies that $u^+(t)+\|x\|\leq u^-(t)$, that is, 
\begin{equation}\label{eq:pflem48}
\psi(t)+\|x_{\varphi}\|\leq \varphi(t),\text{ for all }t\in I\cap [0,t_0].
\end{equation}
Using the same arguments as in the proof of \cite[Lem.~4.8]{KvN12} we obtain that $0\in I$ and
\[\psi(t)>\left(\frac{1}{b''}\right)^{\frac{1}{k}}\cdot \gamma,\quad t\in I\cap [0,t_0].\]
This implies by the definition of $\psi$ that $\psi'(t)<0$, hence $\psi$ is decreasing. Combining this together with \eqref{eq:pflem48} and \eqref{eq:pflem48contr} we obtain
\[\|x_{\varphi}\|=\varphi(0)\geq \psi(0)+\|x_{\varphi}\|\geq \psi(t_0)+\|x_{\varphi}\|=\varphi(t_0)>\left(\frac{2}{b''}\right)^{\frac{1}{k}}\cdot \gamma+\|x_{\varphi}\|,\]
hence
\[0>\left(\frac{2}{b''}\right)^{\frac{1}{k}}\cdot \gamma,\]
which is a contradiction.\smallskip

Hence, we have proved \eqref{eq:lem48ulan}. Since $n$ was arbitrary, we obtain that for all $n\in\nat$
\begin{equation}
\|u_{\la,n}(t)\|\leq \|u_{\la,n}(0)\|+\left(\frac{2}{b''}\right)^{\frac{1}{k}}\cdot \left(2a''\left(1+\sup_{s\in[0,T]}\|v(s)\|\right)^{K}+\ve_{\la,n}\right)^{\frac{1}{k}},\quad t\in[0,T].
\end{equation}
Letting $n\to\infty$, by \eqref{eq:lem48approx} and \eqref{eq:eq:lem48epsilonto0} we obtain
\begin{equation}\label{eq:pflem48ulaest}
\|u_{\la}(t)\|\leq \|x_{\lambda}\|+\left(\frac{4a''}{b''}\right)^{\frac{1}{k}}\cdot \left(1+\sup_{s\in[0,T]}\|v(s)\|\right)^{\frac{K}{k}},\quad t\in[0,T].
\end{equation}
Finally, using the same argument as at the end of the proof of \cite[Prop.~6.2.2]{Ce01b}, we obtain that for any $t$, the net $\{u_{\la}(t)\}_{\la\in\rho(A^c)}$ is a Cauchy-net in $E^c$, hence it is convergent and the limit is $u(t).$ This yields \eqref{eq:lem48}.
\end{proof}

The next result is a generalized version of that of Kunze and van Neerven which was first proved in \cite[Thm.~4.9]{KvN12} but with a typo in the statement and was later corrected in the recent arXiv preprint \cite[Thm.~4.9]{KvN19}.

\begin{theo}\label{theo:KvN4.9gen}
Let $T>0$, $2<q<\infty$ and suppose that Assumptions \ref{assum:mainvN4.9mod} hold with 
\[\theta+\kF<1,\quad \theta+\kG<\frac{1}{2}-\frac{1}{q}.\]
Then for all $\xi\in L^q(\Omega,\mathscr{F}_0,\mathbb{P}; E^c)$ there exists a global mild solution $X\in V_{T,q}$ of \eqref{eq:SCP}.  Moreover, for some constant $C_{q,T}>0$ we have
\[\|X\|_{V_{T,q}}^q\leq C_{q,T}\left(1+\mathbb{E}\|\xi\|^q\right).\]
\end{theo}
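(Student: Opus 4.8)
The plan is to mirror the proof of Theorem \ref{theo:KvN4.3gen}, substituting Lemma \ref{lem:48} for Lemma \ref{lem:44} at the crucial a priori estimate step, and otherwise following \cite[Thm.~4.9]{KvN19} with the modifications already introduced (analyticity but not strong continuity of $S^c$ on $E^c$, non-dense embedding $E^{\theta}\hookrightarrow E^c$, asymmetric growth). First I would introduce the truncated drift $F_n$ exactly as in \eqref{eq:F_n}; since Assumptions \ref{assum:mainvN4.9mod}(1)--(4),(6),(7) coincide with the hypotheses needed in \cite[Prop.~3.8]{KvN12} (using that $E$ has type $2$), for each $n$ there is a unique mild solution $X_n\in V_{T,q}$ of \eqref{eq:SCP} with $F_n$ in place of $F$, satisfying the analogue of \eqref{eq:mildsolXn}. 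The role of the growth exponent is now played by $k/K$ rather than $1/N$, which is why the hypothesis $\kG<\tfrac12$ appears and why $\theta+\kG<\tfrac12-\tfrac1q$ suffices; the stochastic convolution term $S\diamond G(\cdot,X_n(\cdot))$ lies in $C([0,T];E^c)$ with the bound
\begin{equation*}
\mathbb{E}\left\|S\diamond G(\cdot,X_n(\cdot))\right\|_{C([0,T];E^c)}^{q}\leq C(T)\left(1+\mathbb{E}\sup_{t\in[0,T]}\|X_n(t)\|^{q}\right),
\end{equation*}
using \cite[Lem.~3.6, Lem.~5.2]{vNVW08} and $E^{\theta}\hookrightarrow E^c$, with $C(T)\to 0$ as $T\downarrow 0$; the $\widetilde{F}$-convolution is handled as in \eqref{eq:ScsillagFhullam}.

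The key departure from Theorem \ref{theo:KvN4.3gen} is the decomposition
\[u_n=X_n-S\diamond G(\cdot,X_n(\cdot))-S\ast\widetilde{F}(\cdot,X_n(\cdot)),\qquad v_n=S\diamond G(\cdot,X_n(\cdot))+S\ast\widetilde{F}(\cdot,X_n(\cdot)),\]
so that $u_n(t)=S^c(t)\xi+\int_0^tS^c(t-s)F_n(s,u_n(s)+v_n(s))\,ds$ and $u_n\in C((0,T];E^c)\cap L^\infty(0,T;E^c)$ (the required regularity of $u_n$ and $v_n$ follows from analyticity of $S^c$, \cite[Prop.~4.1.8, Prop.~4.2.1]{Lun95}, and the above). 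Applying Lemma \ref{lem:48} — noting $F_n$ still satisfies Assumptions \ref{assum:mainvN4.9mod}(4),(5) with the same constants since the retraction onto the ball of radius $n$ does not spoil the one-sided estimates — gives, pathwise,
\begin{equation*}
\sup_{t\in[0,T]}\|u_n(t)\|\leq \|\xi\|+c\left(1+\sup_{t\in[0,T]}\|v_n(t)\|\right)^{K/k},\qquad c=(4a''/b'')^{1/k}.
\end{equation*}
Taking $q$-th moments and using $\|X_n\|\leq\|u_n\|+\|v_n\|$ together with $\big(\mathbb{E}\sup\|v_n\|^{q}\big)\lesssim (\mathbb{E}\sup\|v_n\|^{(K/k)q})^{k/K}$ turned the other way (Jensen, or rather the growth exponent $k/K<1$ on $G$ balancing the power $K/k$ here), and the bound on $\mathbb{E}\sup\|v_n\|^{(K/k)q}\leq C(T)(1+\mathbb{E}\sup\|X_n\|^{q})$ with $C(T)$ small for small $T$, I would absorb the $X_n$-term on the left to obtain, first for small $T$ and then for arbitrary $T$ by iteration over subintervals,
\begin{equation*}
\|X_n\|_{V_{T,q}}^{q}\leq C_{q,T}\left(1+\mathbb{E}\|\xi\|^{q}\right)
\end{equation*}
uniformly in $n$. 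From here the passage $n\to\infty$ and the identification of the limit as a mild solution of \eqref{eq:SCP} in $L^q(\Omega;L^\infty(0,T;E^c))$, then the upgrade to $V_{T,q}$ via continuity of each of the four terms in \eqref{eq:mildsol}, go through verbatim as in Theorem \ref{theo:KvN4.3gen} and \cite[Thm.~5.3]{Ce03}.

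The main obstacle I anticipate is the bookkeeping of the exponents $k,K$ in the absorption argument: one must check that the product of the growth exponent $k/K$ of $G$ (so that $\mathbb{E}\sup\|S\diamond G\|^{(K/k)q}$ is controlled linearly in $\mathbb{E}\sup\|X_n\|^q$) with the power $K/k$ coming out of Lemma \ref{lem:48} is exactly $1$, so that the estimate closes; and that the restriction $\theta+\kG<\tfrac12-\tfrac1q$ (rather than the $-\tfrac1{Nq}$ of Theorem \ref{theo:KvN4.3gen}) is precisely what makes the $\gamma$-norm estimate of \cite[Lem.~3.6]{vNVW08} applicable at the exponent $(K/k)q$. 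A secondary point requiring care is that truncating $F$ to $F_n$ preserves \eqref{eq:assymF}: this holds because $u\mapsto nu/\|u\|$ is a radial retraction and the inequality \eqref{eq:assymF} is tested against $u^*\in\partial\|u\|$, but it is worth stating explicitly. Everything else — existence of $X_n$, continuity of trajectories, the limit procedure — is a routine transcription of the arguments already spelled out for Theorem \ref{theo:KvN4.3gen}, so I would simply indicate the changes rather than repeat them.
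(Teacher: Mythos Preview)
Your approach has a genuine gap at the point you yourself flag as ``secondary'': truncating $F$ to $F_n$ does \emph{not} preserve \eqref{eq:assymF}. The inequality
\[
\langle F_n(t,u+v)-F_n(t,v),u^*\rangle\leq a''(1+\|v\|)^K-b''\|u\|^k
\]
fails for large $\|u\|$ because $F_n$ is bounded (by $C_{F,0}+L_F^{(n)}n$, say), so the left-hand side is bounded below by a constant independent of $u$, while the right-hand side tends to $-\infty$ as $\|u\|\to\infty$. (Concretely: on $E^c=\mathbb{R}$ with $F(u)=-u^3$, $v=0$, $u>n$, the left side is $-n^3$ but the right side is $a''-b''u^3$.) The radial retraction preserves Assumption~\ref{assum:mainvN4.9mod}(4) --- which is a one-sided \emph{linear} bound, used in Lemma~\ref{lem:44} --- but not the superlinear coercivity in (5), which is exactly what Lemma~\ref{lem:48} needs. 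Consequently you cannot apply Lemma~\ref{lem:48} to $F_n$, and the a~priori estimate does not close.

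This is why the paper truncates $G$ rather than $F$: one sets
\[
G_n(t,\omega,u)\coloneqq\begin{cases}G(t,\omega,u),&\|u\|\leq n,\\ G(t,\omega,nu/\|u\|),&\text{otherwise},\end{cases}
\]
and applies Theorem~\ref{theo:KvN4.3gen} (which internally truncates $F$ and uses Lemma~\ref{lem:44}) to obtain $X_n\in V_{T,q}$ solving \eqref{eq:SCP} with $G_n$ in place of $G$. Since $G_n$ is bounded it satisfies the growth hypothesis of Assumptions~\ref{assum:mainvN4.3mod}(6) for arbitrarily large $N$, so the constraint $\theta+\kG<\tfrac12-\tfrac1{Nq}$ is implied by $\theta+\kG<\tfrac12-\tfrac1q$. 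One then decomposes $u_n=X_n-S\diamond G_n(\cdot,X_n(\cdot))$, $v_n=S\diamond G_n(\cdot,X_n(\cdot))$ and applies Lemma~\ref{lem:48} with the \emph{untruncated} $F$. The growth $\|G_n(t,u)\|\leq c'(1+\|u\|)^{k/K}$ (inherited from $G$ since truncation only shrinks the argument) makes $\mathbb{E}\|S\diamond G_n\|_{C([0,T];E^c)}^{(K/k)q}\lesssim C(T)(1+\|X_n\|_{V_{T,q}}^q)$ with $C(T)\to 0$, and the $S\ast\widetilde{F}$ contribution is handled separately by \eqref{eq:ScsillagFhullam} and \eqref{eq:Ftildegrow2}. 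Everything else --- absorption for small $T$, iteration, passage to the limit, continuity of trajectories --- proceeds exactly as you outline.
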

\begin{proof}
We can proceed similarly as in the proofs of \cite[Thm.~4.9]{KvN12} and \cite[Thm.~5.9]{Ce03}. We set
\begin{equation}\label{eq:G_n}
G_n(t,\omega,u)\coloneqq\begin{cases}
G(t,\omega,u),& \text{ if }\|u\|\leq n,\\
G(t,\omega,\frac{nu}{\|u\|}),&\text{ otherwise.}
\end{cases}
\end{equation}
We obtain by Theorem \ref{theo:KvN4.3gen} that for each $n$ there exists a global mild solution $X_n\in V_{T,q}$ of the problem \eqref{eq:SCP} with $G_n$ instead of $G$ (see also the proof of \cite[Thm.~5.5]{Ce03}). Using \eqref{eq:mildsolXn} for $X_n$ and setting
\[u_n=X_n-S\diamond G_n(\cdot,X_n(\cdot)),\quad v_n=S\diamond G_n(\cdot,X_n(\cdot))\]
we obtain that
\begin{align}\label{eq:pfKvN4.9gen1}
\|u_n\|_{V_{T,q}}^q&=\|S(\cdot)\xi+S\ast F(\cdot,X_n(\cdot))+S\ast \widetilde{F}(\cdot,X_n(\cdot))\|_{V_{T,q}}^q\notag\\
&\lesssim\mathbb{E}\sup_{t\in [0,T]}\|S\ast \widetilde{F}(\cdot,X_n(\cdot))\|^q+\mathbb{E}\sup_{t\in [0,T]}\|S(t)\xi+\int_0^tS(t-s)F(s,u_n(s)+v_n(s))\ds\|^q\notag\\
&\lesssim C(T)\cdot \mathbb{E}\|\widetilde{F}(\cdot,X_n(\cdot))\|^{q}_{L^{\infty}(0,T;E^{-\kF})}+\mathbb{E}\|\xi\|^q\\
&+\left(\frac{4a''}{b''}\right)^{\frac{1}{k}}\left(1+\mathbb{E}\|S\diamond G_n(\cdot,X_n(\cdot))\|_{L^{\infty}(0,T;E^c)}^{\frac{K}{k}q}\right),\notag
\end{align}
where $\lesssim$ denotes that the expression on the left-hand-side is less or equal to a constant times the expression on the right-hand-side. In the last inequality we have used estimate \eqref{eq:ScsillagFhullam} with $C(T)\to 0$ if $T\downarrow 0$ and Lemma \ref{lem:48} with $u=u_n$ and $v=v_n$. 

As in the proof of \cite[Thm.~4.3]{KvN12} with $E^c$ instead of $B$, we obtain that for each $T>0$ there exist a constant $C'_{T}>0$ such that
\begin{align}\label{eq:estimateGn}
\mathbb{E}\|S\diamond G_n(\cdot,X_n(\cdot))\|_{C([0,T];E^c)}^{\frac{K}{k}q}&\lesssim \mathbb{E}\int_0^T\|G_n(t,X_n(t))\|_{\gamma(H,E^{-\kG})}^{\frac{K}{k}q}\dt\\
&\lesssim\mathbb{E}\int_0^T \left(1+\|X_n(t)\|^q\right)\dt\notag\\
&\leq C'(T)\left(1+\|X_n\|_{V_{T,q}}^q\right),\notag
\end{align}
where in the second inequality we have used Assumptions \ref{assum:mainvN4.9mod}(7) and we have $C'(T)\to 0$ if $T\downarrow 0$. 
Combining this with \eqref{eq:Ftildegrow2} and \eqref{eq:pfKvN4.9gen1} we obtain that there are positive constants $C_0$, $C_1$ and $C_2(T)$ such that
\[\|X_n\|_{V_{T,q}}^q\lesssim \|u_n\|_{V_{T,q}}^q+\|v_n\|_{V_{T,q}}^q\leq C_0(T)+C_1 \mathbb{E}\|\xi\|^q+ C_2(T)\|X_n\|_{V_{T,q}}^q\]
with $C_2(T)\to 0$ as $T\downarrow 0.$ The proof can be finished as that of Theorem \ref{theo:KvN4.3gen}.
\end{proof}

\section{A stochastic reaction-diffusion equation}\label{sec:srden}

\subsection{Preparatory results}\label{sec:srdenprep}


In order to apply the abstract result of Theorem \ref{theo:KvN4.9gen} to the stochastic reaction-diffusion equation we need to prove some preliminary results regarding the setting of Section \ref{sec:determnetwork}. We make use of the fact that the semigroups involved here all leave the corresponding real spaces invariant (this follows from the first bullet in the proof of Lemma \ref{lem:subMarkov} and the corresponding Beurling--Deny criterion).

\begin{defi}\label{defi:mcEc}
We denote by
\begin{equation}
\mcE^c:=(C[0,1])^m\times\real^n
\end{equation}
the product space of continuous functions on the edges (not necessarily continuous in the vertices) and denote its elements by
\[U=\left(\begin{smallmatrix}u\\ r\end{smallmatrix}\right)\in\mcE^c\text{ with }u\in (C[0,1])^m,\; r\in\real^n. \]
The norm is defined as usual with
\[\left\|U\right\|_{\mcE^c}\coloneqq \max\left\{\|u\|_{(C[0,1])^m},\|r\|_{\ell^{\infty}}\right\},\quad U=\left(\begin{smallmatrix}u\\ r\end{smallmatrix}\right)\in\mcE^c.\]
\end{defi}
This space will play the role of the space $E^c$ in our setting. We recall that for $p\in[1,\infty]$ the operators $(\mcA_p,D(\mcA_p))$ are generators of analytic semigroups (see Proposition \ref{prop:sgrextend}) on the spaces $\mcE_p$ defined in \eqref{eq:mcEp}.
\begin{equation}\label{eq:fractdommcE}
\text{For }0 \leq \theta< 1\text{ let }\mcE_p^{\theta}\text{ be defined as in \eqref{eq:fractdom} for the operator }\mcA_p\text{ on the space }\mcE_p. 
\end{equation}
We will need the following result on the fractional power spaces $\mcE_p^{\theta}.$
\begin{lemma}\label{lemma:fractionalspaceiso}
For the fractional domain spaces $\mcE_p^{\theta}$ defined in \eqref{eq:fractdommcE} and $1< p<\infty$ arbitrary we have that
\begin{enumerate}
	\item if $0<\theta< \frac{1}{2p}$, then
	\[\mcE_p^{\theta}\cong \left(\prod _{j=1}^m W^{2\theta,p}(0,1;\mu_j dx)\right)\times \real^n;\]
	\item if $\theta > \frac{1}{2p}$, then
	\[\mcE_p^{\theta} \cong \left(\prod _{j=1}^m W_{0}^{2\theta,p}(0,1;\mu_j dx)\right)\times \real^n.\] 
\end{enumerate}
\end{lemma}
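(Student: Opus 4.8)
The plan is to identify the fractional domain space $\mcE_p^\theta = D((\la - \mcA_p)^\theta)$ with a corresponding space built from the scalar fractional Sobolev spaces on each edge, together with the trivial finite-dimensional $\real^n$ factor coming from the boundary component. The key point is that, up to the (bounded, finite-rank) coupling operators $C$ and $M$ in the corner of $\mcA_p$, the operator $\mcA_p$ behaves like a direct sum of second-order elliptic operators $\frac{d}{dx}(c_j\frac{d}{dx}) - p_j$ on $(0,1)$ with continuity/Kirchhoff-type boundary conditions, plus a bounded perturbation on the $\real^n$ part; bounded perturbations do not change fractional domain spaces up to isomorphism, so one may as well compute the fractional powers of the diagonal (decoupled) part.

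First I would reduce to the scalar, one-edge situation: using Proposition \ref{prop:sgrextend} and Lemma \ref{lem:mcAp} to get that $\mcA_p$ generates an analytic semigroup with $D(\mcA_p)$ explicitly given by \eqref{eq:mcAp}, and then arguing that the off-diagonal coupling between the edge part and the vertex part (the operators $C$ and $M$, and the constraint $Lu=r$) is a lower-order/bounded perturbation so that $\mcE_p^\theta$ is isomorphic to the fractional domain space of the ``diagonal'' operator $A_{D,p} \oplus 0$ on $E_p \times \real^n$, where $A_{D,p} = \prod_j \big(\frac{d}{dx}(c_j\frac{d}{dx}) - p_j\big)$ with appropriate local boundary conditions. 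Since the $\real^n$ part carries a bounded operator, its fractional domain space is just $\real^n$ itself, which produces the second factor in both cases. Then I would invoke the known characterization of fractional powers of second-order elliptic operators on an interval: for $-\frac{d}{dx}(c\frac{d}{dx}) + p$ (with smooth positive $c$, nonnegative $p$) under Dirichlet-type conditions, $D((\la - A)^\theta) \cong W^{2\theta,p}(0,1)$ when $2\theta < \frac1p$ (the boundary condition is not ``seen''), and $\cong W_0^{2\theta,p}(0,1)$ when $2\theta > \frac1p$ (the trace is well-defined and the boundary condition is inherited). This is the standard interpolation-space description, e.g. as in Triebel or Amann, or via the fact that $D((\la-A)^\theta) = [E_p, D(A)]_\theta$ for these operators.

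The subtle point — and the main obstacle — is the boundary conditions: the operator $A_{max}$ (hence $\mcA_p$) does \emph{not} impose a genuine decoupled Dirichlet condition at the endpoints; rather it imposes continuity across vertices and the Kirchhoff condition is encoded through the coupling with the $\real^n$-component. One must check carefully that, from the point of view of fractional powers of order $\theta$, what matters is only whether the relevant trace maps $W^{2\theta,p}(0,1) \to$ (boundary values) are bounded, which happens precisely when $2\theta > \frac1p$. For $2\theta < \frac1p$ no boundary condition survives the interpolation and one gets the full $W^{2\theta,p}$; for $2\theta > \frac1p$ the functions in $\mcE_p^\theta$ must have vanishing trace at $0$ and $1$ on each edge — this needs justification, and the cleanest route is: (i) show $D(\mcA_p) \hookrightarrow \prod_j W^{2,p}_{0}$-type spaces is false in general (the functions need not vanish, only be continuous across vertices with the Kirchhoff balance), so one argues instead that the \emph{closure} issues and the fact that $\theta < 1$ combined with the bounded coupling let one replace $\mcA_p$ by its diagonal Dirichlet model without changing $\mcE_p^\theta$ up to isomorphism. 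I would make this precise by citing the perturbation result that if $B$ is $A$-bounded with relative bound compatible with lower order (here $C$ maps into the finite-dimensional space and $M$ is bounded), then $D((\la-(A+B))^\theta) = D((\la - A)^\theta)$ with equivalent norms for $\theta \in (0,1)$; see e.g. the standard results on fractional powers under relatively bounded perturbations.

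Finally I would assemble the pieces: for $0 < \theta < \frac1{2p}$, the diagonal model gives $\prod_j W^{2\theta,p}(0,1;\mu_j\,dx) \times \real^n$ with no boundary constraint, giving statement (1); for $\theta > \frac1{2p}$, the trace maps are bounded so the edge components acquire the vanishing-trace condition, yielding $\prod_j W_0^{2\theta,p}(0,1;\mu_j\,dx) \times \real^n$, which is statement (2). The weight $\mu_j$ only rescales the $L^p$-norm and the operator by a positive constant, so it is harmless throughout. The case $\theta = \frac1{2p}$ is deliberately excluded, as usual, since the characterization of the interpolation space changes character there. Throughout, the identifications are as Banach spaces up to equivalent norms, which is all that is needed for the later application in Theorem \ref{theo:KvN4.9gen}.
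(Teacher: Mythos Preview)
Your approach diverges from the paper's, and the reduction step you yourself flag as ``the subtle point --- and the main obstacle'' is in fact a genuine gap. The perturbation result you want to invoke --- that $D((\la - (A+B))^\theta) = D((\la - A)^\theta)$ when $B$ is suitably $A$-bounded --- requires $A$ and $A+B$ to share the same domain. But $\mcA_p$ and the diagonal Dirichlet model you call $A_{D,p}\oplus 0$ have \emph{different} domains: functions in $D(\mcA_p)$ need not vanish at the endpoints, only be continuous across vertices with the constraint $Lu = r$ tying them to the $\real^n$-component. You acknowledge this, but the appeal to ``closure issues and bounded coupling'' does not repair it: the coupling $Lu = r$ is a domain constraint, not an additive perturbation, and the off-diagonal entries $C$ and $M$ by themselves do not account for the change of boundary condition on the edge part. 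There is no standard relatively-bounded-perturbation theorem that lets you switch domains in this way.

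The paper avoids any reduction to a decoupled model. It first shows that $\omega' - \mcA_p$ has a bounded $H^\infty$-calculus (hence bounded imaginary powers) directly, using that $\mcA_p$ generates a positive contraction semigroup on $\mcE_p$; this yields the complex-interpolation identity $\mcE_p^\theta \cong [D(\mcA_p),\mcE_p]_\theta$ for $\mcA_p$ itself, with no detour through a simpler operator. It then invokes a result of Mugnolo \cite[Cor.~3.6]{Mu14a} giving an explicit \emph{similarity transform} on $\mcE_p$ under which $D(\mcA_p) \cong \big(\prod_j W_0^{2,p}(0,1;\mu_j\,dx)\big)\times\real^n$; similarity (not perturbation) is what legitimately transfers the boundary condition. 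Only then do the standard Triebel-type formulas for $[W_0^{2,p},L^p]_\theta$ finish the argument. If you want to salvage your route, what you actually need is this similarity transform (or an equivalent direct identification of $D(\mcA_p)$ with a product of $W_0^{2,p}$-spaces), not a relatively-bounded-perturbation theorem.
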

\begin{proof}
By Proposition \ref{prop:sgrextend} the operator $(\mcA_p,D(\mcA_p))$ generates a positive, contraction semigroup on $\mcE_p$.
Hence, we can use  \cite[Thm.~in $\mathsection$4.7.3]{Ar04} (see also \cite{Duo89}) obtaining that for any $\omega'>0$, $\omega'-\mcA_p$ has a bounded $H^{\infty}(\Sigma_{\varphi})$-calculus for each $\varphi>\frac{\pi}{2}$. Proposition \ref{prop:sgrextend} implies that $\omega'-\mcA_p$ is injective and sectorial thus it has bounded imaginary powers (BIP). Therefore, by \cite[Prop.~in $\mathsection$4.4.10]{Ar04} (see also \cite[Thm.~11.6.1]{MS01}), it follows that for the complex interpolation spaces
\begin{equation}\label{eq:inclpf2}
\mcE_p^{\theta}=D((\omega'-\mcA_p)^{\theta})\cong [D(\omega'-\mcA_p),\mcE_p]_{\theta}\cong [D(\mcA_p),\mcE_p]_{\theta}
\end{equation}
holds with equivalence of norms. Denote
\[W_0(G)\coloneqq \prod _{j=1}^m W_{0}^{2,p}(0,1;\mu_j dx),\]
where $W_{0}^{2,p}(0,1;\mu_j dx)=W^{2,p}(0,1;\mu_j dx)\cap W_{0}^{1,p}(0,1;\mu_j dx),$ $j=1,\dots m.$ Hence, $W_0(G)$ contains such vectors of functions that are twice weakly differentiable on each edge and continuous in the vertices with Dirichlet boundary conditions. By 
\cite[Cor.~3.6]{Mu14a},
\begin{equation}\label{eq:inclpf3}
D(\mcA_p)\cong W_0(G)\times \real^n,
\end{equation}
where the isomorphism is established by a similarity transform of $\mcE_p$. Using general interpolation theory, see e.g.~\cite[Sec.~4.3.3]{Triebel78}, we have that if $\theta < \frac{1}{2p}$, then
\begin{equation}\label{eq:inclpf4}
\left[W_0(G)\times \real^n,\mcE_p\right]_{\theta}\cong \left(\prod _{j=1}^m W^{2\theta,p}(0,1;\mu_j dx)\right)\times \real^n.
\end{equation}
Hence, summing up \eqref{eq:inclpf2}, \eqref{eq:inclpf3} and \eqref{eq:inclpf4}, for $\theta < \frac{1}{2p}$
\[\mcE_p^{\theta}\cong \left(\prod _{j=1}^m W^{2\theta,p}(0,1;\mu_j dx)\right)\times \real^n\]
holds.

Furthermore, using again \cite[Sec.~4.3.3]{Triebel78}, for $\theta>\frac{1}{2p}$ we have that
\begin{equation}\label{eq:inclpf5}
\left[W_0(G)\times \comp^n,\mcE_p\right]_{\theta}\cong \left(\prod _{j=1}^m W_{0}^{2\theta,p}(0,1;\mu_j dx)\right)\times \real^n.
\end{equation}
Hence, by \eqref{eq:inclpf2}, \eqref{eq:inclpf3} and \eqref{eq:inclpf5},  for $\theta>\frac{1}{2p}$
\begin{equation}
\mcE_p^{\theta} \cong \left(\prod _{j=1}^m W_{0}^{2\theta,p}(0,1;\mu_j dx)\right)\times \real^n
\end{equation}
holds.
\end{proof}

\begin{cor}\label{cor:fractionalspaceincl}
For $\theta> \frac{1}{2p}$ the following continuous embeddings are satisfied:
\begin{equation}\label{eq:Ethetaincl}
\mcE_p^{\theta}\hookrightarrow\mcE^c\hookrightarrow \mcE_p.
\end{equation} 
\end{cor}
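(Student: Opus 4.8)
The plan is to deduce the two embeddings of Corollary~\ref{cor:fractionalspaceincl} from Lemma~\ref{lemma:fractionalspaceiso} together with classical Sobolev embedding theorems. Fix $1<p<\infty$ and $\theta>\tfrac{1}{2p}$. The right-hand embedding $\mcE^c\hookrightarrow\mcE_p$ is essentially trivial: on the bounded interval $(0,1)$ one has $C[0,1]\hookrightarrow L^p(0,1;\mu_j\,dx)$ for each $j$ (with constant depending on $\mu_j$ but not on the function), and $\real^n=\ell^\infty(\real^n)\hookrightarrow\ell^p(\real^n)$ since the dimension is finite; combining these componentwise and using the definitions \eqref{eq:Epnorm}, \eqref{eq:Einftynorm} of the norms gives $\|U\|_{\mcE_p}\le C\|U\|_{\mcE^c}$.

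For the left-hand embedding, first invoke Lemma~\ref{lemma:fractionalspaceiso}: since $\theta>\tfrac{1}{2p}$, we have the isomorphism $\mcE_p^{\theta}\cong\bigl(\prod_{j=1}^m W_0^{2\theta,p}(0,1;\mu_j\,dx)\bigr)\times\real^n$ with equivalence of norms. On the vertex component there is nothing to prove, so it suffices to show that $W_0^{2\theta,p}(0,1)\hookrightarrow C[0,1]$ (the weight $\mu_j$ being a fixed positive constant changes norms only by a constant). This is a one-dimensional fractional Sobolev embedding: when $2\theta p>1$, i.e.~$\theta>\tfrac{1}{2p}$, one has $W^{2\theta,p}(0,1)\hookrightarrow C^{0,\alpha}[0,1]\hookrightarrow C[0,1]$ with $\alpha=2\theta-\tfrac1p$ (for $2\theta<1$; for $2\theta\ge 1$ the embedding into $C[0,1]$ is even more classical), see e.g.~\cite{Triebel78}. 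A fortiori $W_0^{2\theta,p}(0,1)\hookrightarrow C[0,1]$ since $W_0^{2\theta,p}\subset W^{2\theta,p}$. Assembling the $m$ edge components and the vertex component, and composing with the isomorphism from Lemma~\ref{lemma:fractionalspaceiso}, yields $\mcE_p^{\theta}\hookrightarrow\mcE^c$ continuously.

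The only mild subtlety is the role of the precise value $\tfrac{1}{2p}$ and the use of case~(2) rather than case~(1) of Lemma~\ref{lemma:fractionalspaceiso}: one must be sure $\theta>\tfrac{1}{2p}$ is exactly the threshold that both (a) places us in the "Dirichlet" case of the lemma and (b) guarantees $2\theta p>1$, which is the sharp condition for the Sobolev embedding into continuous functions in one dimension. These two thresholds coincide, which is why the corollary is stated for this range. I do not expect any genuine obstacle here; the proof is a short concatenation of the structural isomorphism already established and a textbook embedding, and the main point to be careful about is simply bookkeeping the fixed positive weights $\mu_j$ and the finite-dimensionality of the $\real^n$ factor so that all constants are harmless.
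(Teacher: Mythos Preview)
Your proof is correct and follows essentially the same approach as the paper: invoke Lemma~\ref{lemma:fractionalspaceiso}(2) to identify $\mcE_p^{\theta}$ with a product of $W_0^{2\theta,p}$-spaces times $\real^n$, apply the one-dimensional Sobolev embedding $W^{2\theta,p}(0,1)\hookrightarrow C[0,1]$ for $2\theta p>1$, and observe the trivial embedding $\mcE^c\hookrightarrow\mcE_p$. Your write-up is more detailed than the paper's (which dispatches the argument in three lines), but the logic is identical.
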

\begin{proof}
According to Lemma \ref{lemma:fractionalspaceiso}$(2)$ we have that for $\theta> \frac{1}{2p}$
\begin{equation}\label{eq:interpol}
\mcE_p^{\theta} \cong \left(\prod _{j=1}^m W_{0}^{2\theta,p}(0,1;\mu_j dx)\right)\times \real^n
\end{equation}
holds. Hence, by Sobolev imbedding we obtain that for $\theta>\frac{1}{2p}$
\[\mcE_p^{\theta}\hookrightarrow \mcE^c\]
is satisfied. The claim follows by observing $\mcE^c\hookrightarrow\mcE_p$.
\end{proof}

In the following we will prove that each of the semigroups $(\mcT_p(t))_{t\geq 0}$ restricts to the same analytic semigroup of contractions on $\mcE^c$.

\begin{prop}\label{prop:mcAonmcEc}
For all $p\in[1,\infty]$, the semigroups $\mcT_p$ leave $\mcE^c$ invariant, and the restrictions $\mcT_p|_{\mcE^c}$ all coincide that we denote by $\mcS^c$. The semigroup $\mcS^c$ is analytic and contractive on $\mcE^c$. Its generator $(\mcA^c,D(\mcA^c))$ coincides with the part $(\mcA_p|_{\mcE^c},D(\mcA_p|_{\mcE^c}))$ of the operator $(\mcA_p,D(\mcA_p))$ in $\mcE^c$ for any $p\in[1,\infty]$.
\end{prop}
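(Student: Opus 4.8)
The plan is to establish the three assertions — invariance of each $\mcT_p$ on $\mcE^c$ with coinciding restrictions, analyticity and contractivity of the common restriction $\mcS^c$, and the identification of its generator — in that order, exploiting the consistency relation \eqref{eq:sgrconsistent} from Proposition \ref{prop:sgrextend} together with the embeddings of Corollary \ref{cor:fractionalspaceincl}. First I would fix some $p_0\in(1,\infty)$ and some $\theta$ with $\tfrac{1}{2p_0}<\theta<1$, so that by Corollary \ref{cor:fractionalspaceincl} we have the continuous embeddings $\mcE_{p_0}^{\theta}\hookrightarrow\mcE^c\hookrightarrow\mcE_{p_0}$. Since $\mcT_{p_0}$ is analytic on $\mcE_{p_0}$, for $t>0$ we have $\mcT_{p_0}(t)\mcE_{p_0}\subset D(\mcA_{p_0})\subset\mcE_{p_0}^{\theta}\hookrightarrow\mcE^c$; in particular $\mcT_{p_0}(t)$ maps $\mcE^c$ into $\mcE^c$ for every $t>0$. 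For $t=0$ this is trivial, so $\mcE^c$ is $\mcT_{p_0}$-invariant. The consistency \eqref{eq:sgrconsistent} then forces $\mcT_p(t)U=\mcT_{p_0}(t)U$ for every $U\in\mcE^c$ and every $p\in[1,\infty]$ (recall $\mcE^c\hookrightarrow\mcE_p$ for all $p$, using $\mcE^c\subset\mcE_\infty$ and $\mcE_\infty\hookrightarrow\mcE_p$), which gives the invariance of $\mcE^c$ under all $\mcT_p$ and the coincidence of the restrictions; call this common family $\mcS^c$.

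Next I would verify that $\mcS^c$ is a semigroup in the sense used in the paper (the axioms (a)--(c) recalled after Proposition \ref{prop:sgrextend}): the semigroup law and local boundedness are inherited from $\mcT_{p_0}$ via the continuity of $\mcE_{p_0}^{\theta}\hookrightarrow\mcE^c$ — more precisely, for $t\in(0,1]$ one estimates $\|\mcT_{p_0}(t)U\|_{\mcE^c}\lesssim\|\mcT_{p_0}(t)U\|_{\mcE_{p_0}^\theta}=\|(\omega'-\mcA_{p_0})^\theta \mcT_{p_0}(t)U\|_{\mcE_{p_0}}\lesssim t^{-\theta}\|U\|_{\mcE_{p_0}}\lesssim t^{-\theta}\|U\|_{\mcE^c}$, which is only a bound blowing up like $t^{-\theta}$, so this crude argument is not quite enough for (b). To get genuine contractivity on $\mcE^c$ I would instead argue at the level of $\mcE_\infty$: by Proposition \ref{prop:sgrextend}(2) the semigroup $\mcT_\infty$ is contractive (and positive) on $\mcE_\infty$ with the sup-type norm \eqref{eq:Einftynorm}, and $\mcE^c$ is a \emph{closed} subspace of $\mcE_\infty$ carrying exactly the restricted norm (Definition \ref{defi:mcEc}), so the restriction $\mcS^c=\mcT_\infty|_{\mcE^c}$ is automatically contractive on $\mcE^c$. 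Thus axiom (b) holds with constant $1$, and axiom (c) (injectivity of the orbit map) follows because $\mcT_\infty(t)U=0$ for all $t>0$ implies $U=0$ already in $\mcE_\infty$.

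For analyticity of $\mcS^c$ on $\mcE^c$ I would show that $t\mapsto\mcT_{p_0}(t)$ extends holomorphically to a sector $\Sigma_\vartheta$ as a map into $\mathcal{L}(\mcE^c)$. This is where the smoothing estimate is genuinely useful rather than just a contractivity substitute: for $z\in\Sigma_{\vartheta'}$ with $|z|\le 1$, $\mcT_{p_0}(z)$ maps $\mcE_{p_0}$ into $D(\mcA_{p_0})\subset\mcE_{p_0}^\theta\hookrightarrow\mcE^c$, analyticity of $z\mapsto\mcT_{p_0}(z)$ into $\mathcal{L}(\mcE_{p_0})$ upgrades (via the closed embedding $\mcE_{p_0}^\theta\hookrightarrow\mcE^c$ and the identity $\mcT_{p_0}(z)=\mcT_{p_0}(z/2)\mcT_{p_0}(z/2)$, the first factor landing in $\mcE_{p_0}^\theta$) to analyticity of $z\mapsto\mcT_{p_0}(z)|_{\mcE^c}$ into $\mathcal{L}(\mcE^c)$, and the sectorial smoothing bound $\|(\omega'-\mcA_{p_0})^\theta\mcT_{p_0}(z)\|_{\mathcal{L}(\mcE_{p_0})}\lesssim|z|^{-\theta}$ on $\Sigma_{\vartheta'}$ gives the required local boundedness of the extension on $\Sigma_{\vartheta'}\cap\{|z|\le1\}$. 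Together with the already-established contractivity on the positive axis this is exactly the definition of an analytic contraction semigroup on $\mcE^c$ recalled in the excerpt. The main obstacle here is the bookkeeping of the holomorphic extension: one must be careful that the extension obtained from the $\mcE_{p_0}$-picture genuinely takes values in $\mathcal{L}(\mcE^c)$ and restricts back to $\mcS^c$ on $(0,\infty)$, which is handled by the semigroup factorization trick above and by consistency.

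Finally, for the generator I would invoke Remark \ref{rem:analdisscontr}: having shown that $\mcS^c$ is an analytic contraction semigroup on $\mcE^c$ which is the restriction of the analytic semigroup $\mcT_{p_0}$ on $\mcE_{p_0}$, and having the embeddings $D(\mcA_{p_0})\subset\mcE_{p_0}^\theta\hookrightarrow\mcE^c\hookrightarrow\mcE_{p_0}$, the verbatim argument of that remark — computing $R(\lambda,\mcA^c)U=\int_0^\infty e^{-\lambda t}\mcS^c(t)U\,dt=\int_0^\infty e^{-\lambda t}\mcT_{p_0}(t)U\,dt=R(\lambda,\mcA_{p_0})U$ for $U\in\mcE^c$ and $\lambda$ large, with the integral converging in $\mcE_{p_0}$ by the embedding $\mcE^c\hookrightarrow\mcE_{p_0}$ and in $\mcE^c$ by analyticity — shows $R(\lambda,\mcA^c)=R(\lambda,\mcA_{p_0}|_{\mcE^c})$, hence $(\mcA^c,D(\mcA^c))=(\mcA_{p_0}|_{\mcE^c},D(\mcA_{p_0}|_{\mcE^c}))$. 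Since the restrictions $\mcT_p|_{\mcE^c}$ all coincide with $\mcS^c$, the same identification holds with $\mcA_{p_0}$ replaced by $\mcA_p$ for any $p\in[1,\infty]$: indeed the parts $\mcA_p|_{\mcE^c}$ all have the same resolvent on $\mcE^c$ by consistency, so they all equal $\mcA^c$. This completes the proof; I expect the genuinely delicate point to be the transfer of analyticity to $\mathcal{L}(\mcE^c)$ (the second-to-last paragraph), everything else being a consequence of consistency and the Sobolev embedding already packaged in Corollary \ref{cor:fractionalspaceincl}.
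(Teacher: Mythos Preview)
Your overall strategy mirrors the paper's closely: invariance via analytic smoothing into $\mcE^c$, coincidence of the restrictions by consistency, contractivity from $\mcT_\infty$, and identification of the generator via the resolvent computation of Remark~\ref{rem:analdisscontr}. The one genuine gap is in the analyticity step. You claim that the sectorial smoothing bound $\|(\omega'-\mcA_{p_0})^\theta\mcT_{p_0}(z)\|_{\mathcal{L}(\mcE_{p_0})}\lesssim|z|^{-\theta}$ ``gives the required local boundedness of the extension on $\Sigma_{\vartheta'}\cap\{|z|\le1\}$'', but this is false: $|z|^{-\theta}$ blows up as $|z|\to 0$, so it does \emph{not} yield boundedness of $\mcS^c$ on $\Sigma_{\vartheta'}\cap\{|z|\le 1\}$ in $\mathcal{L}(\mcE^c)$, which is exactly what the definition of analytic semigroup (recalled after Proposition~\ref{prop:sgrextend}) requires. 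Your factorization trick $\mcT_{p_0}(z)=\mcT_{p_0}(z/2)\mcT_{p_0}(z/2)$ establishes holomorphy of the $\mcE^c$-valued map, but any route from $\mcE^c$ to $\mcE^c$ that passes through $\mcE_{p_0}^\theta$ via the smoothing estimate necessarily picks up the singular factor $|z|^{-\theta}$.

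The paper sidesteps this by a resolvent argument routed through $\mcE_\infty$ rather than $\mcE_{p_0}$. It first shows $D(\mcA_p)\subset\mcE^c$ for \emph{all} $p\in[1,\infty]$ (the case $p=\infty$ via consistency of resolvents with $\mcA_2$), identifies $\mcA^c=\mcA_\infty|_{\mcE^c}$, and then transfers analyticity from $\mcE_\infty$ to $\mcE^c$ using the resolvent characterization \cite[Cor.~3.7.18]{ABHN11}: the estimate $\sup_{|s|>r}\|sR(is,\mcA_\infty)\|_{\mcE_\infty}<\infty$ holds by analyticity of $\mcT_\infty$, and since the $\mcE^c$- and $\mcE_\infty$-norms coincide on $\mcE^c$ and $R(is,\mcA^c)=R(is,\mcA_\infty)|_{\mcE^c}$, the same estimate holds for $\mcA^c$ in $\mcE^c$-norm. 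Your direct-extension approach can be repaired along the same lines: use that $\mcT_\infty$ is analytic on $\mcE_\infty$ with a \emph{bounded} holomorphic extension to a sector, observe $\mcT_\infty(z)\mcE^c\subset D(\mcA_\infty)\subset\mcE^c$ for $z\in\Sigma_{\vartheta'}$, and note that boundedness in $\mcE_\infty$-norm is exactly boundedness in $\mcE^c$-norm --- no $|z|^{-\theta}$ appears because you are no longer paying for a fractional domain embedding.
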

\begin{proof}
First we will show that for each $p\in[1,\infty]$, $D(\mcA_p)\subset \mcE^c$ holds. If $p\in [1,\infty)$ it follows easily from \eqref{eq:mcAp} and Sobolev imbedding. For $p=\infty$ take $U\in D(\mcA_{\infty})$. Then for any $\la>0$ there exists $V\in \mcE_{\infty}$ such that $R(\la,\mcA_{\infty})V=U$. Using that the semigroup $\mcT_2$ is the extension of $\mcT_{\infty}$ to $\mcE_2$ by \eqref{eq:sgrconsistent} and $\mcE_{\infty}\hookrightarrow\mcE_2$ holds, by a similar argument as in Remark \ref{rem:analdisscontr} we obtain that $V\in \mcE_2$ and $R(\la,\mcA_{2})V=U\in D(\mcA_2)$. The claim follows now by observing $D(\mcA_2)\subset \mcE^c$.

From Proposition \ref{prop:sgrextend} we know that for each $p\in[1,\infty]$ the semigroup $\mcT_p$ is analytic and contractive. Hence, using the inclusion $D(\mcA_p)\subset \mcE^c$ and \cite[Thm.~3.7.19]{ABHN11}, we obtain that $\mcT_p$ leaves $\mcE^c$ invariant. By \eqref{eq:sgrconsistent} we also have that the restrictions on $\mcE^c$ all coincide, thus we may use $\mcS^c$ to denote this common restriction. It is straighforward that $\mcS^c$ is a contraction semigroup on $\mcE^c$ since $\mcT_{\infty}$ is a contraction semigroup on $\mcE_{\infty}$ and the norms on $\mcE^c$ and $\mcE_{\infty}$ coincide.

Using the same argument as in Remark \ref{rem:analdisscontr}, and the fact $D(\mcA_p)\subset \mcE^c$, we obtain that $\mcA^c=\mcA_p|_{\mcE^c}$ for all $p\in[1,\infty]$.

It remains to prove that $\mcS^c$ is analytic. We now use that $\mcT_{\infty}$ is analytic on $\mcE_{\infty}$. That is, by \cite[Cor.~3.7.18]{ABHN11}, there exists $r>0$ such that $\left\{is:s\in\real,\; |s|>r\right\}\subset\rho(\mcA_{\infty})$
and
\begin{equation}\label{eq:resestimate}
\sup_{|s|>r}\left\|sR(is,\mcA_{\infty})\right\|_{\mcE_{\infty}}<\infty.
\end{equation}
Since $\mcA_{\infty}|_{\mcE^c}=\mcA^c$, the analogue of \eqref{eq:resestimate} also holds with $\mcA^c$ instead of $\mcA_{\infty}$ and with respect to the norm of $\mcE^c$. Hence, using \cite[Cor.~3.7.18]{ABHN11} again, we obtain that the semigroup $\mcS^c$ generated by $\mcA^c$ on $\mcE^c$ is analytic. \end{proof}

\subsection{Main results}\label{subsec:mainresults}

We now apply the results of the previous sections to the following stochastic evolution equation, based on \eqref{netcp} (see \cite[Sec.~2]{BMZ08}, \cite[Sec.~5]{KvN12}). We prescribe stochastic noise on the nodes as well as on the edges of the network.

Let $(\Omega,\mathscr{F},\mathbb{P})$ be a complete probability space endowed with a right-continuous filtration $\mathbb{F}=(\mathscr{F}_t)_{t\in [0,T]}$ for some $T>0$ given. We consider the problem
\begin{equation}\label{eq:stochsys2}
\left\{\begin{array}{rcll}
\dot{u}_j(t,x)&=& (c_j u_j')'(t,x)+d_j(x)\cdot u_j'(t,x)&\\
&-& p_j(x)u_j(t,x)+f_j(t,x,u_j(t,x))&\\
&+&h_j(t,x,u_j(t,x))\frac{\partial w_j}{\partial t}(t,x), &t\in(0,T],\; x\in(0,1),\; j=1,\dots,m, \\
u_j(t,\mv _i)&=&u_\ell (t,\mv _i)\eqqcolon r_i(t), &t\in(0,T],\; \forall j,\ell\in \Gamma(\mv _i),\; i=1,\ldots,n,\\
\dot{r}_i(t)&=&[M r(t)]_{i}&\\
&&+\sum_{j=1}^m \phi_{ij}\mu_{j} c_j(\mv_i) u'_j(t,\mv_i)&\\
&&+g_i(t,r_i(t))\dot{\beta}_i(t), &t\in(0,T],\; i=1,\ldots,n,\\
u_j(0,x)&=&\mathsf{u}_{j}(x), &x\in [0,1],\; j=1,\dots,m. \\
r_i(0)&=&\mathsf{r}_{i}, & i=1,\dots ,n.
\end{array}
\right.
\end{equation}

Here  $\dot{\beta}_i(t)$, $i=1,\dots ,n$, are independent noises; written as formal derivatives of independent scalar Brownian motions $(\beta_i(t))_{t\in [0,T]}$, defined on $(\Omega,\mathscr{F},\mathbb{P})$ with respect to the filtration $\mathbb{F}$. The terms $\frac{\partial w_j}{\partial t}$, $j=1,\dots ,m$, are independent space-time white noises on $[0,1]$; written as formal derivatives of independent cylindrical Wiener-processes $(w_j(t))_{t\in [0,T]}$, defined on $(\Omega,\mathscr{F},\mathbb{P})$, in the Hilbert space $L^2(0,1; \mu_j dx)$  with respect to the filtration $\mathbb{F}$.

In contrast to Section \ref{sec:determnetwork}, we add a first order term $d_j(x)\cdot u_j'(t,x)$ to the first equation of \eqref{netcp} assuming
\begin{equation}
d_j\in \mathrm{Lip}[0,1],\quad j=1,\dots ,m.
\end{equation}

The functions $f_j\colon [0,T]\times \Omega\times [0,1]\times \real\to\real$ are polynomials of the form
\begin{equation}\label{eq:fjdef}
f_j(t,\omega,x,\eta)=-a_j(t,\omega,x)\eta^{2k_j+1}+\sum_{l=0}^{2k_j}a_{j,l}(t,\omega,x)\eta^l,\quad \eta\in\real,\, j=1,\dots ,m
\end{equation} 
for some integers $k_j,$ $j=1,\dots ,m.$
For the coefficients we assume that there are constants $0<c\leq C<\infty$ such that
\begin{equation}\label{eq:assa_j}
c\leq a_j(t,\omega,x)\leq C,\;\left|a_{j,l}(t,\omega,x)\right|\leq C,\text{ for all }j=1,\dots ,m,\;l=0,\dots ,2k_j,
\end{equation}
for all $(t,\omega,x)\in [0,T]\times \Omega\times [0,1]$, see \cite[Ex.~4.2]{KvN12}. Furthermore, we suppose that
\[a_j(t,\omega,\cdot),\,a_{j,l}(t,\omega,\cdot)\in C[0,1],\quad j=1,\dots m,\;l=0,\dots ,2k_j\]
and that the coefficients $a_{j,l}\colon [0,T]\times \Omega\times [0,1]\to\real$ are jointly measurable and adapted in the sense that for each $j$ and $l$ and for each $t\in[0,T]$, the function $a_{j,l}(t,\cdot)$ is $\mathscr{F}_t\otimes \mcB_{[0,1]}$-measurable, where $\mcB_{[0,1]}$ denotes the sigma-algebra of the Borel sets on $[0,1].$

\begin{rem}
The functions coming from the classical FitzHugh-Nagumo problem (see e.g. \cite{BMZ08})
\begin{equation}
f_j(\eta)\coloneqq \eta(\eta-1)(a_j-\eta),\quad j=1,\dots ,m
\end{equation}
with $a_j\in (0,1)$ satisfy the conditions above.
\end{rem}

Furthermore, let
\begin{align}\label{eq:defkK}
K\coloneqq 2k_{max}+1,\quad k\coloneqq 2k_{min}+1,\\
\text{ where }k_{max}=\max_{j=1,\dots,m}k_j,\quad k_{min}=\min_{j=1,\dots,m}k_j.\notag
\end{align}

For the functions $g_i$ we assume
\begin{align}
&g_i\colon [0,T]\times \Omega \times\real\to \real,\quad i=1,\dots ,n \text{ are locally Lipschitz continuous}\notag\\
&\text{in the third variable, uniformly with respect to the first 2 variables, and}\label{eq:gidefLip}\\
&|g_i(t,\omega,r)|\leq c(1+|r|)^{\frac{k}{K}}\text{ for all }(t,\omega,r)\in [0,T]\times\Omega\times \real\label{eq:gidefnov}
\end{align}
where the constants $k$ and $K$ are defined in \eqref{eq:defkK}. We further require that the functions $g_i$ are jointly measurable and adapted in the sense that for each $i$ and $t\in[0,T]$, $g_i(t,\cdot)$ is $\mathscr{F}_t\otimes \mcB_{\real}$-measurable, where  $\mcB_{\real}$ denotes the sigma-algebra of the Borel sets on $\real$.\smallskip

We suppose that
\begin{align}
&h_j\colon [0,T]\times \Omega\times [0,1]\times \real\to\real,\quad j=1,\dots ,m \text{ are locally Lipschitz continuous}\notag\\
&\text{in the fourth variable, uniformly with respect to the first 3 variables, and}\label{eq:hjdefLip}\\
&|h_j(t,\omega, x,\eta)|\leq c(1+|\eta|)^{\frac{k}{K}}\text{ for all }(t,\omega, x,\eta)\in [0,T]\times [0,1]\times \real.\label{eq:hjdefnov}
\end{align}
We further assume that the functions $h_j$ are jointly measurable and adapted in the sense that for each $j$ and $t\in[0,T]$, $h_j(t,\cdot)$ is $\mathscr{F}_t\otimes \mcB_{[0,1]}\otimes \mcB_{\real}$-measurable, where $\mcB_{[0,1]}$ and $\mcB_{\real}$ denote the sigma-algebras of the Borel sets on $[0,1]$ and $\real$, respectively.

We rewrite system \eqref{eq:stochsys2} in an abstract form analogously to \eqref{eq:SCP}

\begin{equation}\tag{SCPn}\label{eq:SCPn}
\left\{
\begin{aligned}
d\mcX(t)&=[\mcA \mcX(t)+\mcF(t,\mcX(t))+\widetilde{\mcF}(t,\mcX(t))]dt+\mcG(t,\mcX(t))d\mcW(t)\\
\mcX(0)&=\xi.
\end{aligned}
\right.
\end{equation}

The operator $(\mcA, D(\mcA))$ is $(\mcA_p, D(\mcA_p))$ for some large $p\in [2,\infty)$, where $p$ will be chosen in \eqref{eq:thetap} \eqref{eq:mainthmqp} and \eqref{eq:Holderregpfp}, \eqref{eq:mainthmBp2} later. Hence, by Proposition \ref{prop:sgrextend}, $\mcA$ is the generator of the strongly continuous analytic semigroup $\mathcal{S}\coloneqq \mcT_p(t)$ on the Banach space $\mcE_p$, and $\mcE_p$ is a UMD space of type $2$. 

For the function $\mcF\colon [0,T]\times \Omega\times\mcE^c\to \mcE^c$ we have
\begin{equation}\label{eq:mcFdef}
\mcF(t,\omega,U)\coloneqq \begin{pmatrix}
	\mcF_1(t,\omega,u) \\ 0_{\real^n}
\end{pmatrix},\quad U=\begin{pmatrix}
	u \\ r
\end{pmatrix}\in \mcE^c,
\end{equation}
with 
\begin{align}\label{eq:mcF1}
\mcF_1(t,\omega,u)&\colon [0,1]\to \real^m,\notag\\
(\mcF_1(t,\omega,u))(x)&\coloneqq \left(f_1(t,\omega,x,u_1(x)),\dots ,f_m(t,\omega,x,u_m(x))\right)^{\top},
\end{align}
 $t\in[0,T]$, $\omega\in\Omega$.

We now define $\widetilde{\mcF}$ and $\mcG$ and after that we prove that they map between the appropriate spaces as assumed in Section \ref{sec:srde}. Let
\begin{equation}\label{eq:mcFtilde}
(\widetilde{\mcF} U)(x)\coloneqq \begin{pmatrix}
	\widetilde{\mcF}_1 u \\ 0_{\real^n}
\end{pmatrix},\quad U=\begin{pmatrix}
	u \\ r
\end{pmatrix}\in (C^1[0,1])^m\times\real^n,
\end{equation}
be defined as a map from $(C^1[0,1])^m\times\real^n$ to $\mcE_p$ for any $p>1$ with 
\begin{align}\label{eq:mcFtilde1}
\widetilde{\mcF}_1 u &\colon [0,1]\to \real^m,\notag\\
(\widetilde{\mcF}_1 u )(x)&\coloneqq \left(d_1(x)\cdot \left(\frac{d}{dx} u_1\right)(x),\dots ,d_m(x)\cdot \left(\frac{d}{dx} u_m\right)(x))\right)^{\top}.
\end{align}

To define the operator $\mcG$ we argue in analogy with \cite[Sec.~5]{KvN19}. First define 
\[
\mcH\coloneqq \mcE_2
\]
the product $L^2$-space, see \eqref{eq:mcE_2def}, which is a Hilbert space. We further define the multiplication operator $\Gamma\colon [0,T]\times \Omega\times\mcE^c\to\mathcal{L}(\mcH)$ as
\begin{equation}\label{eq:Gamma2def}
[\Gamma(t,\omega,U)Y](x)\coloneqq \begin{pmatrix}
	\Gamma_1(t,\omega, x,u(x)) & 0_{m\times n} \\ 
	0_{n\times m} & \Gamma_2(t,\omega, r)
\end{pmatrix}_{(m+n)\times(m+n)}\cdot Y
\end{equation}
for $U=\begin{pmatrix}
	u \\ r
\end{pmatrix}\in \mcE^c$ and $Y\in\mcH$ with 
\[\Gamma_1(t,\omega,x,u(x))=\begin{pmatrix} h_1(t,\omega, x,u_1(x)) & \cdots &  0\\
\vdots &  \ddots & \vdots \\
0 & \cdots & h_m(t,\omega,x,u_m(x))\end{pmatrix}_{m\times m}\]
and
\[\Gamma_2(t,\omega,r)=\begin{pmatrix} g_1(t,\omega,r_1) & \cdots &  0\\
\vdots &  \ddots & \vdots \\
0 & \cdots & g_n(t,\omega,r_n)\end{pmatrix}_{n\times n}.\]
Because of the assumptions on the functions $h_j$ and $g_i$, $\Gamma$ clearly maps into $\mathcal{L}(\mcH).$

Let $(\mcA_2,D(\mcA_2))$ be the generator on $\mcH=\mcE_2,$ see Proposition \ref{prop:sgrextend}, and pick $\kG\in(\frac{1}{4},\frac{1}{2})$. 
Using Lemma \ref{lemma:fractionalspaceiso}(2) we have that there is an isomorphism
\begin{equation}\label{eq:imath}
\imath\colon \mcE_2^{\kG} \to \left(\prod _{j=1}^m H_{0}^{2\kG}(0,1;\mu_j dx)\right)\times\real^n \eqqcolon\mcH_1.
\end{equation}
By Corollary \ref{cor:fractionalspaceincl}, $\mcH_1 \hookrightarrow\mcE^c$ holds. Using Corollary \ref{cor:fractionalspaceincl} again, we have that the there exists a continuous embedding
\begin{equation}\label{eq:jmath}
\jmath\colon \mcH_1 \to \mcE_p
\end{equation}
for $p\geq 2$ arbitrary.

Let $\nu>0$ arbitrary and define now $G$ by
\begin{equation}\label{eq:mcG2def}
(\nu-\mcA_p)^{-\kG}\mcG(t,\omega, U)Y\coloneqq \jmath\, \imath\,   (\nu-\mcA_2)^{-\kG}\Gamma(t,\omega,U)Y,\quad U\in \mcE^c,\; Y\in \mcH.
\end{equation}

\begin{lemma}\label{lem:mcG2prop}
\hspace{2em}
\begin{enumerate}[1.]
	\item Let $p>1$ arbitrary. Then the mapping defined in \eqref{eq:mcFtilde} can be extended to a linear and continuous operator from $\mcE^c$ into $\mcE_p^{-\frac{1}{2}}$, that we also call $\widetilde{\mcF}$.
	\item Let $p\geq 2$ and $\kG\in(\frac{1}{4},\frac{1}{2})$ be arbitrary. The operator $\mcG$ defined in \eqref{eq:mcG2def} maps $[0,T]\times\Omega\times \mcE^c$ into $\gamma (\mcH,\mcE_p^{-\kappa_{G}})$.
\end{enumerate}
\end{lemma}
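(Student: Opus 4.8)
For the first claim, the plan is to fix $p\in(1,\infty)$ (the range in which the extrapolation space and the duality below behave well; this is all that is needed in the sequel), to establish the continuity estimate on the dense subspace $(C^1[0,1])^m\times\real^n$ of $\mcE^c$ where $\widetilde{\mcF}$ is already defined, and then to extend by continuity, using $\mcE_p\hookrightarrow\mcE_p^{-1/2}$. To estimate $\|\widetilde{\mcF}U\|_{\mcE_p^{-1/2}}$ I would dualise: by Proposition \ref{prop:sgrextend} and the consistency \eqref{eq:sgrconsistent}, $\mcA_2$ is self-adjoint and the adjoint of $\mcA_p$ on $\mcE_p$ is $\mcA_{p'}$, so $\|\widetilde{\mcF}U\|_{\mcE_p^{-1/2}}=\|(\nu-\mcA_p)^{-1/2}\widetilde{\mcF}U\|_{\mcE_p}=\sup_{\|W^*\|_{\mcE_{p'}}\le1}|\langle\widetilde{\mcF}U,(\nu-\mcA_{p'})^{-1/2}W^*\rangle|$; and by Lemma \ref{lemma:fractionalspaceiso}(2), applied with $\theta=\tfrac12>\tfrac1{2p'}$, the vectors $W:=(\nu-\mcA_{p'})^{-1/2}W^*$ range, up to a fixed constant, over the unit ball of $\big(\prod_{j=1}^m W_0^{1,p'}(0,1;\mu_j dx)\big)\times\real^n$. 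For such a $W$ with edge components $w_j\in W_0^{1,p'}(0,1)$, integration by parts on each edge (the boundary terms vanish since $w_j(0)=w_j(1)=0$), together with $d_j\in\mathrm{Lip}[0,1]$ and $W^{1,p'}(0,1)\hookrightarrow L^1(0,1)$, gives
\[
|\langle\widetilde{\mcF}U,W\rangle|=\Big|\sum_{j=1}^m\int_0^1 u_j\,(d_j w_j)'\,\mu_j\,dx\Big|\lesssim\|u\|_{(C[0,1])^m}\le\|U\|_{\mcE^c},
\]
and taking the supremum over $W$ proves the estimate. This part is routine once the extrapolation space is identified via Lemma \ref{lemma:fractionalspaceiso}.

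For the second claim, note from \eqref{eq:mcG2def} that $(\nu-\mcA_p)^{-\kG}\mcG(t,\omega,U)=\jmath\,\imath\,(\nu-\mcA_2)^{-\kG}\Gamma(t,\omega,U)$ on $\mcH$, and since $(\nu-\mcA_p)^{-\kG}\colon\mcE_p^{-\kG}\to\mcE_p$ is an isomorphism, the assertion $\mcG(t,\omega,U)\in\gamma(\mcH,\mcE_p^{-\kG})$ is equivalent to $\jmath\,\imath\,(\nu-\mcA_2)^{-\kG}\Gamma(t,\omega,U)\in\gamma(\mcH,\mcE_p)$. In this composition $\Gamma(t,\omega,U)\in\mathcal L(\mcH)$: indeed $U\in\mcE^c$ has bounded coordinates, so by the growth bounds \eqref{eq:hjdefnov} and \eqref{eq:gidefnov} the diagonal matrices $\Gamma_1(t,\omega,\cdot,u(\cdot))$ and $\Gamma_2(t,\omega,r)$ in \eqref{eq:Gamma2def} are essentially bounded, whence multiplication by them is bounded on $\mcH=\mcE_2$; moreover $(\nu-\mcA_2)^{-\kG}\colon\mcE_2\to\mcE_2^{\kG}$ is bounded and $\imath\colon\mcE_2^{\kG}\to\mcH_1$ is an isomorphism by \eqref{eq:imath} and Lemma \ref{lemma:fractionalspaceiso}(2) (which is where $\kG>\tfrac14$ is used). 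Hence, by the operator-ideal property of $\gamma$-radonifying operators, it suffices to prove that the continuous embedding $\jmath\colon\mcH_1\hookrightarrow\mcE_p$ from \eqref{eq:jmath} is itself $\gamma$-radonifying.

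The $\gamma$-radonifying property of $\jmath$ is the one non-routine ingredient and the main obstacle, and it is here that $\kG>\tfrac14$, i.e. $2\kG>\tfrac12$, enters a second time. Both $\mcH_1$ and $\mcE_p$ decompose with an $n$-dimensional block on which $\jmath$ acts as the identity (trivially $\gamma$-radonifying), and a further splitting over the edges reduces the claim to showing that the inclusion of the separable Hilbert space $H_0^{2\kG}(0,1;\mu_j dx)$ into $L^p(0,1;\mu_j dx)$ is $\gamma$-radonifying. Because $2\kG>\tfrac12$ we have $H_0^{2\kG}(0,1)\hookrightarrow C[0,1]$ (the Sobolev embedding underlying Corollary \ref{cor:fractionalspaceincl}), so point evaluations are functionals on $H_0^{2\kG}(0,1)$ with norm bounded uniformly in $x\in[0,1]$; for an orthonormal basis $(e_l)_{l\in\nat}$ this yields $\sup_{x}\sum_l|e_l(x)|^2<\infty$, and then, by Tonelli and the Gaussian moment identity ($p<\infty$ is used here),
\[
\mathbb E\Big\|\sum_l\gamma_l e_l\Big\|_{L^p(0,1;\mu_j dx)}^p=c_p\int_0^1\Big(\sum_l|e_l(x)|^2\Big)^{p/2}\mu_j\,dx<\infty,
\]
which is precisely the assertion that the inclusion is $\gamma$-radonifying (alternatively one invokes the standard kernel characterisation of $\gamma$-radonifying operators into $L^p$-spaces, the kernel being the bounded reproducing kernel of $H_0^{2\kG}(0,1)$). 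Combined with the reduction of the previous paragraph this gives $\mcG(t,\omega,U)\in\gamma(\mcH,\mcE_p^{-\kG})$ and completes the proof.
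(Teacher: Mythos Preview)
Your proof is correct and follows essentially the same route as the paper: for Part 1 both argue by duality, identifying $\mcE_p^{-1/2}$ with the dual of $\mcE_{p'}^{1/2}\cong\big(\prod_j W_0^{1,p'}(0,1;\mu_j\,dx)\big)\times\real^n$ via Lemma \ref{lemma:fractionalspaceiso}(2) and integrating by parts against test functions with zero boundary values; for Part 2 both reduce via the ideal property of $\gamma$-radonifying operators to showing $\jmath\in\gamma(\mcH_1,\mcE_p)$, which the paper obtains by citing \cite[Lem.~2.1(4), Cor.~2.2]{vNVW08} while you spell out explicitly the same square-function/reproducing-kernel argument (the uniform bound on $\sum_l|e_l(x)|^2$ coming from $2\kG>\tfrac12$).
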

\begin{proof}
\begin{enumerate}[1.]
	\item To prove the claim for $\widetilde{\mcF}$ let $p> 1$ arbitrary and $q\coloneqq (1-\frac{1}{p})^{-1}$. We first investigate the operator $\widetilde{\mcF}_1$ defined in \eqref{eq:mcFtilde1} and take $u\in (C^1[0,1])^m$, $v\in (W_0^{1,q}(0,1))^m$ obtaining that for a constant $c'>0$
\begin{align}
\left|\langle \widetilde{\mcF}_1 u, v\rangle\right|&=\left|\sum_{j=1}^m\int_0^1 d_j(x)u_j'(x)v_j(x)\dx\right|=\left|-\sum_{j=1}^m\int_0^1 u_j(x)(d_jv_j)'(x)\dx\right|\\
&\leq c'\cdot \|u\|_{(C[0,1])^m}\cdot \|d\|_{(W^{1,\infty}(0,1))^m}\cdot \|v\|_{(W_0^{1,q}(0,1))^m}
\end{align}
where $d=(d_1,\dots ,d_m)$ and $\langle \cdot,\cdot\rangle$ denotes the duality of $(W_0^{1,q}(0,1))^m$. Hence, for a positive constant $c$,
\[\frac{\left|\langle \widetilde{\mcF}_1u, v\rangle\right|}{\|v\|_{(W_0^{1,q}(0,1))^m}}\leq c\|u\|_{(C[0,1])^m}.\]
Since $(C^1[0,1])^m$ is dense in $(C[0,1])^m$, $\widetilde{\mcF}_1$ can be extended to a continuous linear operator 
\[\widetilde{\mcF}_1\colon (C[0,1])^m \to \left((W_0^{1,q}(0,1))^m\right)^*\] 
with
\begin{equation}\label{eq:mcFtildepf}
\|\widetilde{\mcF}_1 u\|_{\left((W_0^{1,q}(0,1))^m\right)^*}\leq c\|u\|_{(C[0,1])^m},\quad u\in (C[0,1])^m.
\end{equation}
By \cite[Thm.~3.1.4]{vNeervenbook} we have
\[\left(\mcE_q^{\frac{1}{2}}\right)^*\cong \mcE_p^{-\frac{1}{2}}\;\text{ for }\frac{1}{p}+\frac{1}{q}=1.\]
Lemma \ref{lemma:fractionalspaceiso} implies that
	\[\mcE_q^{\frac{1}{2}} \cong \left(\prod _{j=1}^m W_{0}^{1,q}(0,1;\mu_j dx)\right)\times \real^n\cong (W_0^{1,q}(0,1))^m\times \real^n\] 
holds, hence
\[\mcE_p^{-\frac{1}{2}}\cong \left(\mcE_q^{\frac{1}{2}}\right)^*\cong \left((W_0^{1,q}(0,1))^m\right)^*\times \real^n.\]
By the definition \eqref{eq:mcFtilde} of $\widetilde{\mcF}$ and by the extension \eqref{eq:mcFtildepf} of $\widetilde{\mcF}_1$ this means exactly that $\widetilde{\mcF}$ can be extended to a continuous linear operator from $\mcE^c$ in $\mcE_p^{-\frac{1}{2}}$ with
\begin{equation}\label{eq:Fhullamcontlin}
\|\widetilde{\mcF} U\|_{\mcE_p^{-\frac{1}{2}}}\leq c\|U\|_{\mcE^c},\quad U\in\mcE^c.
\end{equation}
	
	\item We can argue as in \cite[Sec.~10.2]{vNVW08}. Using \cite[Lem.~2.1(4)]{vNVW08}, we obtain in a similar way as in \cite[Cor.~2.2]{vNVW08}) that $\jmath \in\gamma (\mathcal{H}_1,\mcE_p)$, since $2\kG>\frac{1}{2}$ holds. Hence, by the definition of $\mcG$ and the ideal property of $\gamma$-radonifying operators, the mapping $\mcG$ takes values in $\gamma (\mcH,\mcE_p^{-\kappa_{G}})$.
\end{enumerate}
\end{proof}

The driving noise process $\mcW$ is defined by
\begin{equation}\label{eq:mcW2def}
\mcW(t)=\begin{pmatrix}
	w_1(t,\cdot)\\
	\vdots\\
	w_m(t,\cdot)\\
	\beta_1(t)\\
	\vdots\\
	\beta_n(t)
\end{pmatrix},
\end{equation}
and thus $(\mcW(t))_{t\in [0,T]}$ is a cylindrical Wiener process, defined on $(\Omega,\mathscr{F},\mathbb{P})$, in the Hilbert space $\mcH$ with respect to the filtration $\mathbb{F}$.
\medskip

Similar to \eqref{eq:V_T} for a fixed $T>0$ and $q\geq 1$ we define the space
\begin{equation}\label{eq:mcV_T}
\mcV_{T,q}\coloneqq L^q\left(\Omega;C((0,T];\mcE^c)\cap L^{\infty}(0,T;\mcE^c)\right)
\end{equation}
being a Banach space with norm
\begin{equation}\label{eq:mcV_Tnorm}
\left\|U\right\|^q_{\mcV_{T,q}}\coloneqq \mathbb{E}\sup_{t\in [0,T]}\|U(t)\|_{\mcE^c}^q,\quad U\in \mcV_{T,q},
\end{equation} 
This Banach space will play a crucial role for the solutions of \eqref{eq:SCPn}.

We will state now the result regarding system \eqref{eq:SCPn}.

\begin{theo}\label{theo:SCPnsolcont}
Let $\mcF$, $\widetilde{\mcF}$, $\mcG$ and $\mcW$ defined as in \eqref{eq:mcFdef}, \eqref{eq:mcFtilde}, \eqref{eq:mcG2def} and \eqref{eq:mcW2def}, respectively. 
Let $q>4$ be arbitrary.
Then for every $\xi\in L^q(\Omega,\mathscr{F}_0,\mathbb{P};\mcE^c)$ equation \eqref{eq:SCPn} has a unique global mild solution in $\mcV_{T,q}$.
\end{theo}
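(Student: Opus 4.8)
The plan is to check that the concrete data $\mcA=\mcA_p$, $\mcF$, $\widetilde{\mcF}$, $\mcG$, $\mcW$ of \eqref{eq:SCPn} fit the hypotheses of the abstract existence result, Theorem \ref{theo:KvN4.9gen}, with $E=\mcE_p$, $E^c=\mcE^c$ and $H=\mcH$; the conclusion then follows by invoking that theorem. First I would fix the parameters. Take $\kF=\tfrac12$ and choose $\kG\in(\tfrac14,\tfrac12)$; since $q>4$ we have $\tfrac12-\tfrac1q>\tfrac14$, so after taking $\kG$ close enough to $\tfrac14$ there is room to pick $p\in[2,\infty)$ so large that $\tfrac1{2p}<\tfrac12-\tfrac1q-\kG$, and then to pick $\theta\in\bigl(\tfrac1{2p},\,\tfrac12-\tfrac1q-\kG\bigr)$. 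With these choices $\theta+\kF=\theta+\tfrac12<1$ and $\theta+\kG<\tfrac12-\tfrac1q$, i.e.\ exactly the parameter restrictions of Theorem \ref{theo:KvN4.9gen}; note that the forced lower bound $\kG>\tfrac14$, coming from Lemma \ref{lem:mcG2prop}(2), is what makes the assumption $q>4$ necessary.

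Next I would run through Assumptions \ref{assum:mainvN4.9mod}. Parts (1)--(3) are supplied by Section \ref{sec:determnetwork}: $\mcE_p$ is a UMD space of type $2$ and $\mcA_p$ is densely defined, closed and sectorial by Proposition \ref{prop:sgrextend}; the continuous embeddings $\mcE_p^\theta\hookrightarrow\mcE^c\hookrightarrow\mcE_p$ for $\theta>\tfrac1{2p}$ are Corollary \ref{cor:fractionalspaceincl}; and Proposition \ref{prop:mcAonmcEc} says that $\mcS=\mcT_p$ restricts to the analytic contraction semigroup $\mcS^c$ on $\mcE^c$, with generator $\mcA^c=\mcA_p|_{\mcE^c}$. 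For part (6) I would use Lemma \ref{lem:mcG2prop}(1): $\widetilde{\mcF}$ extends to a \emph{bounded linear} map $\mcE^c\to\mcE_p^{-1/2}$, hence is globally Lipschitz in its last argument and has the linear growth \eqref{eq:Ftildegrow2}, and it is trivially strongly measurable and adapted because $d_1,\dots,d_m$ are deterministic. For part (7), Lemma \ref{lem:mcG2prop}(2) gives that $\mcG$ maps $[0,T]\times\Omega\times\mcE^c$ into $\gamma(\mcH,\mcE_p^{-\kG})$; the local Lipschitz estimate follows from the local Lipschitz continuity of the $h_j$ and $g_i$ (which enter $\mcG$ only through the multiplication operator $\Gamma$), together with the boundedness of $\imath,\jmath$ and the ideal property of $\gamma$-radonifying operators; the growth bound with exponent $\tfrac{k}{K}$ follows the same way from \eqref{eq:gidefnov} and \eqref{eq:hjdefnov} (with $k,K$ as in \eqref{eq:defkK}); and measurability and adaptedness are inherited from the corresponding assumptions on $h_j$ and $g_i$.

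The heart of the matter is Assumptions \ref{assum:mainvN4.9mod}(4) and (5) for $\mcF$. That $\mcF$ maps $\mcE^c$ into itself, is locally Lipschitz in the last variable uniformly in $(t,\omega)$, has $\|\mcF(t,\omega,0)\|_{\mcE^c}\le C$, is strongly measurable and adapted, and satisfies the growth bound $\|\mcF(t,\omega,v)\|_{\mcE^c}\le a''(1+\|v\|_{\mcE^c})^K$ is immediate from the polynomial form \eqref{eq:fjdef} and the coefficient bounds \eqref{eq:assa_j}, using $2k_j+1\le K$ for every $j$. For the two one-sided estimates I would argue as in \cite[Ex.~4.2]{KvN12} and \cite[Thm.~5.3]{Ce03}: a functional $U^*\in\partial\|U\|_{\mcE^c}$ is a signed point evaluation, either at a vertex value or at a point $x_0$ of some edge $j_0$ with $|u_{j_0}(x_0)|=\|U\|_{\mcE^c}$. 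Since $\mcA^c$ generates a contraction semigroup, $\langle\mcA^c U,U^*\rangle\le 0$ for $U\in D(\mcA^c)$; when the maximum is attained at a vertex the $\real^n$-component of $\mcF$ vanishes and the estimates are trivial, and on an edge everything reduces to bounding $\sgn(u_{j_0}(x_0))\,f_{j_0}(t,\omega,x_0,u_{j_0}(x_0)+v_{j_0}(x_0))$, where the negative leading coefficient produces a term $\le -c\,\|U\|_{\mcE^c}^{2k_{j_0}+1}$ while the cross terms and the lower-order terms are absorbed into it by Young's inequality, at the cost of powers of $(1+\|v\|_{\mcE^c})$ of order $\le 2k_{j_0}+1\le K$; splitting into the cases $\|U\|_{\mcE^c}\le 1$ and $\|U\|_{\mcE^c}\ge 1$ (in the latter $\|U\|_{\mcE^c}^{2k_{j_0}+1}\ge\|U\|_{\mcE^c}^{k}$ with $k=2k_{min}+1$) then yields both \eqref{eq:assymF} and the estimate in Assumptions \ref{assum:mainvN4.9mod}(4) with $N=K$. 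The main obstacle I expect is precisely this bookkeeping: tracking edges that carry polynomials of \emph{different} degrees while still extracting a uniform dissipation exponent $k$ and a uniform growth exponent $K$ --- this is exactly the generality for which the asymmetric condition \eqref{eq:assymF} was designed --- and making sure the Kirchhoff coupling at the vertices is entirely absorbed into $\langle\mcA^c U,U^*\rangle\le 0$.

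Once all of Assumptions \ref{assum:mainvN4.9mod} are verified, Theorem \ref{theo:KvN4.9gen} produces a global mild solution $\mcX\in\mcV_{T,q}$ of \eqref{eq:SCPn}. For the uniqueness assertion (which is not part of the statement of Theorem \ref{theo:KvN4.9gen}) I would use the standard localization argument: truncating $\mcG$ as in \eqref{eq:G_n}, the hypotheses of Theorem \ref{theo:KvN4.3gen} hold for the $\mcG_n$-problem (the bounded map $\mcG_n$ satisfies a growth bound of order $\tfrac1N$ with an $n$-dependent constant, and $\theta+\kG<\tfrac12-\tfrac1q\le\tfrac12-\tfrac1{Nq}$), so that problem has a \emph{unique} solution in $\mcV_{T,q}$; since any mild solution of \eqref{eq:SCPn} lying in $\mcV_{T,q}$ has almost surely bounded paths, it must coincide with the $\mcG_n$-solution up to the exit time of the ball of radius $n$ in $\mcE^c$, and letting $n\to\infty$ gives uniqueness and completes the proof.
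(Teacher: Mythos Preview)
Your proposal is correct and follows essentially the same route as the paper's proof: fix parameters $\theta,\kG,p$ compatible with $q>4$, then verify Assumptions \ref{assum:mainvN4.9mod}(1)--(7) using Proposition \ref{prop:sgrextend}, Corollary \ref{cor:fractionalspaceincl}, Proposition \ref{prop:mcAonmcEc}, and Lemma \ref{lem:mcG2prop}, and invoke Theorem \ref{theo:KvN4.9gen}. The paper handles the one-sided estimates in (4) and (5) by citing the scalar computations in \cite[Ex.~4.2, 4.5]{KvN12} and then lifting them to $\mcE^c$ via \cite[Sec.~4.3]{DPZ92}; your more explicit description of $U^*$ as a signed point evaluation together with the dissipativity of $\mcA^c$ is exactly what those references amount to. Your separate treatment of uniqueness via localization to the $\mcG_n$-problems is a useful addition: the paper's proof does not spell this out, relying instead on the fact that the approximation scheme in the proof of Theorem \ref{theo:KvN4.9gen} inherits uniqueness from Theorem \ref{theo:KvN4.3gen}.
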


\begin{proof}
The condition $q > 4$ allows us to choose $2 \leq p < \infty$, $\theta\in (0,\frac{1}{2})$ and $\kG\in (\frac14 ,\frac12)$ such that
\begin{equation}\label{eq:thetap}
\theta>\frac{1}{2p}
\end{equation} 
and 
\begin{equation}\label{eq:thetakappaG}
0 < \theta+ \kG < \frac12 - \frac1q.
\end{equation} 
We will apply Theorem \ref{theo:KvN4.9gen} with $\theta$ and $\kG$ having the properties above. To this end we have to check Assumptions \ref{assum:mainvN4.9mod} for the mappings in \eqref{eq:SCPn}, taking $\mcA=\mcA_p$ for the $p$ chosen above. 

\begin{enumerate}[(a)]
	\item Assumption $(1)$ is satisfied because of the generator property of $\mcA_p$, see Proposition \ref{prop:sgrextend}.
	\item Assumption $(2)$ is satisfied since \eqref{eq:thetap} holds and we can use Corollary \ref{cor:fractionalspaceincl}.
	\item Assumption $(3)$ is satisfied because of Proposition \ref{prop:mcAonmcEc}.
	\item To satisfy Assumptions $(4)$ and $(5)$ we first remark that the locally Lipschitz continuity of $\mcF$ follows from \eqref{eq:fjdef}. In the following we have to consider vectors $U^*\in\partial\|U\|$ for $U=\left(\begin{smallmatrix}u\\ r\end{smallmatrix}\right)\in\mcE^c$. It is easy to see that there exists $U^*\in\partial\|U\|$ of the form
\[U^*=\left(\begin{smallmatrix}u^*\\ r^*\end{smallmatrix}\right)\]
with
\[u^*\in \partial\|u\|_{(C[0,1])^m}\text{ and }r^*\in\partial\|r\|_{\ell^{\infty}}.\]
Using that the functions $f_j$ are polynomials of the 4th variable (see \eqref{eq:fjdef}), a similar computation as in \cite[Ex.~4.2]{KvN12} shows that for all $j=1,\dots ,m$ and for a suitable constant $a'\geq 0$
\[f_j(t,\omega,x,\eta+\zeta)\cdot\sgn\eta\leq a'(1+|\zeta|^{2k_j+1})\]
holds. Using techniques from \cite[Sec.~4.3]{DPZ92} we obtain that
\[\langle \mcA U+\mcF (t,U+V), U^*\rangle\leq a'(1+\|V\|_{\mcE^c})^K+b'\|U\|_{\mcE^c}\]
with $K$ defined in \eqref{eq:defkK} and for all $U\in D(\mcA_p|_{\mcE^c})$, $V\in \mcE^c$ and $U^*\in\partial\|U\|.$
Following the computation of \cite[Ex.~4.5]{KvN12}, we obtain that for suitable positive constants $a,b,c$ and for all $(t,\omega,x)\in [0,T]\times\Omega\times [0,1]$ and $j=1,\dots ,m$
\[\left[f_j(t,\omega,x,\eta+\zeta)-f_j(t,\omega,x,\zeta)\right]\cdot\sgn\eta\leq a-b|\eta|^{2k_j+1}+c|\zeta|^{2k_j+1}\]
holds. Using again techniques from \cite[Sec.~4.3]{DPZ92} (see also \cite[Rem.~5.1.2 and (5.19)]{Ce03}, we obtain that for $k$ and $K$ defined in \eqref{eq:defkK} $K\geq k$ holds and
\[\langle \mcF (t,\omega, U+V)-\mcF (t,\omega, V), U^*\rangle\leq a''(1+\|V\|_{\mcE^c})^K-b''\|U\|_{\mcE^c}^k\]
for all $t\in[0,T]$, $\omega\in\Omega$, $U,V\in \mcE^c$ and $U^*\in\partial\|U\|.$ Furthermore,
\begin{equation}\label{eq:Ass3.5mcF}
\left\|\mcF(t,V)\right\|_{\mcE^c}\leq a''(1+\|V\|_{\mcE^c})^K
\end{equation}
for all $V\in \mcE^c.$
\item To check Assumption (6) we refer to Lemma \ref{lem:mcG2prop}. This implies that $\widetilde{F}\colon \mcE^c\to \mcE^{-\kF}$ with $\kF=\frac{1}{2}$. Since $\widetilde{F}$ is a continuous linear operator, the rest of the statement also follows.
\item To check Assumption (7) note that by Lemma \ref{lem:mcG2prop}, $\mcG$ takes values in $\gamma (\mcH,\mcE_p^{-\kappa_{G}})$ with $\mcH=\mcE_2$ and $\kG$ chosen above. We apply a similar computation as in the proof of \cite[Thm.~10.2]{vNVW08}. We fix $U,V\in \mcE^c$ and let
\[R\coloneqq \max\left\{\|U\|_{\mcE^c},\|V\|_{\mcE^c}\right\}.\]
Furthermore, we denote the matrix from \eqref{eq:Gamma2def} by
\[\mathcal{M}_{\Gamma}(t,\omega,U)\coloneqq \begin{pmatrix}
	\Gamma_1(t,\omega,\cdot,u(\cdot)) & 0_{m\times n} \\ 
	0_{n\times m} & \Gamma_2(t,\omega,r)
\end{pmatrix}_{(m+n)\times(m+n)},\quad\text{ for }U=\begin{pmatrix}
	u \\ r
\end{pmatrix}\in \mcE^c.\]
For $R>0$ we denote
\begin{equation}
L_g(R)\coloneqq\max_{1\leq i\leq n}L_{g_i}(R),\quad L_h(R)\coloneqq\max_{1\leq j\leq m}L_{h_j}(R),
\end{equation}
where the positive constants $L_{g_i}(R)$'s and $L_{h_j}(R)$'s are the corresponding Lipschitz constants of the functions $g_i$ and $h_j$, respectively, on the ball of radius $R$, see \eqref{eq:gidefLip} and \eqref{eq:hjdefLip}.
From the right-ideal property of the $\gamma$-radonifying operators and \eqref{eq:mcG2def} we have that
\begin{align}
&\left\|(-\mcA_p)^{-\kG}\left(\mcG(t,\omega,U)-\mcG(t,\omega,V)\right)\right\|_{\gamma (\mcH,\mcE_p)}\\
&\leq \left\|\jmath\, \imath\,   (-\mcA_2)^{-\kG}\right\|_{\gamma (\mcH,\mcE_p^{-\kappa_{G}})}\cdot\left\|\Gamma(t,\omega,U)-\Gamma(t,\omega,V)\right\|_{\mathcal{L}(\mcH)}\\
&\leq \left\|\jmath\, \imath\,   (-\mcA_2)^{-\kG}\right\|_{\gamma (\mcH,\mcE_p^{-\kappa_{G}})}\cdot\left\|\mathcal{M}_\Gamma(t,\omega,U)-\mathcal{M}_\Gamma(t,\omega,V)\right\|_{L^{\infty}\left(([0,1])^m\times\real^n,\real^{(m+n)\times(m+n)}\right)}\\
&\preceq \left\|\jmath\, \imath\,   (-\mcA_2)^{-\kG}\right\|_{\gamma (\mcH,\mcE_p^{-\kappa_{G}})}\cdot \max\{L_g(R),L_h(R)\}\cdot \left\|U-V\right\|_{\mcE^c}.
\end{align}
Hence, we obtain that $\mcG:[0,T]\times \mcE^c\to \gamma (\mcH,\mcE_p^{-\kappa_{G}})$ is locally Lipschitz continuous.\\
Using the assumptions \eqref{eq:gidefnov} and \eqref{eq:hjdefnov} on the functions $g_i$'s and $h_j$'s and an analogous computation as above, we obtain that $\mcG$ grows as required in Assumption $(7)$ as a map $[0,T]\times \mcE^c\to \gamma (\mcH,\mcE_p^{-\kappa_{G}})$.
\end{enumerate}
\end{proof}

In the following we treat the special case when $h_j\equiv 0$, $j=1,\dots ,m$, that is, there is stochastic noise only in the vertices of the network. To rewrite the equations \eqref{eq:stochsys2} in the form \eqref{eq:SCPn}, we define the operator $\mcG$ in a different way than it has been done in \eqref{eq:mcG2def}.

Instead of the operator in \eqref{eq:Gamma2def} we define $\Gamma\colon [0,T]\times\Omega\times\mcE^c\to\mathcal{L}(\mcE_p)$ as
\begin{equation}\label{eq:Gammadef}
\left[\Gamma(t,\omega,U)Y\right](x)\coloneqq \begin{pmatrix}
	0_{m\times m} & 0_{m\times n} \\ 
	0_{n\times m} & \Gamma_2(t,\omega,r)
\end{pmatrix}_{(m+n)\times(m+n)}\cdot Y
\end{equation}
for $U=\begin{pmatrix}
	u \\ r
\end{pmatrix}\in \mcE^c$ and $Y\in\mcE_p$ with 
\[\Gamma_2(t,\omega,r)=\begin{pmatrix} g_1(t,\omega,r_1) & \cdots &  0\\
\vdots &  \ddots & \vdots \\
0 & \cdots & g_n(t,\omega,r_n)\end{pmatrix}_{n\times n}.\]
Because of the assumptions on the functions $g_i$, $\Gamma$ clearly maps into $\mathcal{L}(\mcE_p).$

Now, let
\begin{equation}\label{eq:Rdef}
R\coloneqq \begin{pmatrix}
	0_{m\times m} & 0_{m\times n} \\ 
	0_{n\times m} & I_{n\times n}
\end{pmatrix}_{(m+n)\times(m+n)}
\end{equation} 
Then for all $p\geq 2$, $R\in\gamma(\mcH,\mcE_p)$ with $\mcH=\mcE_2$ holds since $R$ has finite dimensional range. 

Now $\mcG\colon [0,T]\times\Omega\times\mcE^c\to\gamma(\mcH,\mcE_p)$ will be defined as
\begin{equation}\label{eq:mcGdef}
\mcG(t,\omega,U)Y\coloneqq \Gamma(t,\omega,U)R Y,\quad U\in\mcE^c,\quad Y\in\mcH.
\end{equation}

In this case we obtain a better regurality in Theorem \ref{theo:SCPnsolcont}.

\begin{theo}\label{theo:SCPnsolconthj0}
Let $\mcF$, $\widetilde{\mcF}$, $\mcG$ and $\mcW$ defined as in \eqref{eq:mcFdef}, \eqref{eq:mcFtilde}, \eqref{eq:mcGdef} and \eqref{eq:mcW2def}, respectively, and assume that $h_j\equiv 0,$ $j=1,\dots m.$ 
Then for arbitrary $q>2$ and for every $\xi\in L^q(\Omega,\mathscr{F}_0,\mathbb{P};\mcE^c)$ equation \eqref{eq:SCPn} has a unique global mild solution in $\mcV_{T,q}$.
\end{theo}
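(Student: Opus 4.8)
The plan is to derive the result from Theorem \ref{theo:KvN4.9gen} in exactly the same way as Theorem \ref{theo:SCPnsolcont}, the only — but decisive — difference being that when $h_j\equiv 0$ the diffusion operator $\mcG$ of \eqref{eq:mcGdef} may be taken with $\kG=0$. Indeed, the matrix $R$ in \eqref{eq:Rdef} has finite-dimensional range, so $R\in\gamma(\mcH,\mcE_p)$ for every $p\geq 2$ (with $\mcH=\mcE_2$), and no composition with a negative fractional power of $\mcA_p$ is needed to produce a $\gamma$-radonifying operator. With $\kG=0$ the smallness requirement of Theorem \ref{theo:KvN4.9gen} reduces to $\theta+\kF<1$ and $\theta<\tfrac12-\tfrac1q$, and since $q>2$ the latter interval is nonempty; this is precisely what relaxes the restriction $q>4$ of Theorem \ref{theo:SCPnsolcont} to $q>2$. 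Concretely, given $q\in(2,\infty)$ I would first pick $\theta\in(0,\tfrac12)$ with $\theta<\tfrac12-\tfrac1q$, then choose $p\geq 2$ large enough that $\tfrac1{2p}<\theta$; with $\kF=\tfrac12$ (Lemma \ref{lem:mcG2prop}(1)) and $\kG=0$ all of $\theta+\kF<1$, $\theta+\kG<\tfrac12-\tfrac1q$ and $0\leq\kG<\tfrac12$ hold. Take $\mcA=\mcA_p$, so that $\mcS=\mcT_p$ is a strongly continuous analytic semigroup on the UMD type-$2$ space $\mcE_p$.

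Assumptions \ref{assum:mainvN4.9mod}(1)--(3) are then verified verbatim as in the proof of Theorem \ref{theo:SCPnsolcont}: (1) by the generator property in Proposition \ref{prop:sgrextend}, (2) by Corollary \ref{cor:fractionalspaceincl} (using $\theta>\tfrac1{2p}$), (3) by Proposition \ref{prop:mcAonmcEc}. Since $\mcF$ and $\widetilde{\mcF}$ are unchanged from \eqref{eq:mcFdef} and \eqref{eq:mcFtilde}, Assumptions (4) and (5) on $\mcF$ — the one-sided polynomial estimates coming from \eqref{eq:fjdef} together with computations as in \cite[Ex.~4.2, 4.5]{KvN12} combined with the techniques of \cite[Sec.~4.3]{DPZ92} — and Assumption (6) on $\widetilde{\mcF}$ (Lemma \ref{lem:mcG2prop}(1), giving $\widetilde{\mcF}\colon\mcE^c\to\mcE_p^{-\kF}$ with $\kF=\tfrac12$, linear and continuous and hence globally Lipschitz with the required linear growth) carry over word for word.

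The only genuinely new verification is Assumption \ref{assum:mainvN4.9mod}(7) for the $\mcG$ of \eqref{eq:mcGdef} with $\kG=0$. The multiplication operator $\Gamma(t,\omega,U)$ of \eqref{eq:Gammadef} belongs to $\mathcal{L}(\mcE_p)$ with $\|\Gamma(t,\omega,U)\|_{\mathcal{L}(\mcE_p)}\leq\max_{1\leq i\leq n}|g_i(t,\omega,r_i)|$, so the ideal property of $\gamma$-radonifying operators gives $\mcG(t,\omega,U)=\Gamma(t,\omega,U)R\in\gamma(\mcH,\mcE_p)$ with $\|\mcG(t,\omega,U)\|_{\gamma(\mcH,\mcE_p)}\leq\|R\|_{\gamma(\mcH,\mcE_p)}\max_{1\leq i\leq n}|g_i(t,\omega,r_i)|$; then \eqref{eq:gidefnov} yields $\|\mcG(t,\omega,U)\|_{\gamma(\mcH,\mcE_p)}\leq c'(1+\|U\|_{\mcE^c})^{k/K}$, which is exactly the growth required in Assumption (7) (here $E^{-\kG}=\mcE_p$). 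Local Lipschitz continuity of $U\mapsto\mcG(t,\omega,U)$ follows in the same way from \eqref{eq:gidefLip}, since $\Gamma$ depends on $U=(u,r)^{\top}$ only through $r$ and each $g_i$ is locally Lipschitz; measurability and adaptedness are inherited from the $g_i$. With all of Assumptions \ref{assum:mainvN4.9mod} in force, Theorem \ref{theo:KvN4.9gen} produces the global mild solution $\mcX\in\mcV_{T,q}$ with $\|\mcX\|_{\mcV_{T,q}}^q\leq C_{q,T}(1+\mathbb{E}\|\xi\|^q)$, and uniqueness follows as in the proof of Theorem \ref{theo:SCPnsolcont}. I do not expect a serious obstacle here: the argument is essentially inherited from Theorem \ref{theo:SCPnsolcont}, the one new observation being that $R$ has finite rank so that $\kG=0$ is admissible; the only point needing a little care is that the chain $\tfrac1{2p}<\theta<\tfrac12-\tfrac1q$ can be realized simultaneously, which works precisely because $p$ may be taken arbitrarily large.
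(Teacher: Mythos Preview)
Your proposal is correct and follows essentially the same approach as the paper: apply Theorem \ref{theo:KvN4.9gen} with $\kG=0$, noting that the finite-rank matrix $R$ already lies in $\gamma(\mcH,\mcE_p)$, and choose $p$ large enough that $\tfrac{1}{2p}<\theta<\tfrac12-\tfrac1q$ is achievable. The only cosmetic difference is that the paper first fixes $p$ (via the equivalent condition $q(\tfrac12-\tfrac1{2p})>1$) and then $\theta$, whereas you pick $\theta$ first and then $p$; the verification of Assumption \ref{assum:mainvN4.9mod}(7) via the ideal property of $\gamma$-radonifying operators is identical.
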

\begin{proof}
We first chose $p\geq 2$ such that
\begin{equation}\label{eq:mainthmqp}
q\left(\frac{1}{2}-\frac{1}{2p}\right)>1.
\end{equation}
Hence, we can take $\theta$ satisfying
\[\frac{1}{2p}<\theta<\frac{1}{2}-\frac{1}{q}.\]
We will apply Theorem \ref{theo:KvN4.9gen} with $\theta$ having this property and $\kG=0$. To this end we have to check Assumptions \ref{assum:mainvN4.9mod} again for the mappings in \eqref{eq:SCPn}, taking $\mcA=\mcA_p$ for the $p$ chosen above. This can be done in the same way as in the proof of Theorem \ref{theo:SCPnsolcont} up to Assumption (7). It can be easily checked for $\kG=0$ for the operator $\mcG\colon [0,T]\times\Omega\times\mcE^c\to\gamma(\mcH,\mcE_p)$ defined in \eqref{eq:mcGdef}. If $U,V\in\mcE^c$ with $\|U\|_{\mcE^c},\|V\|_{\mcE^c}\leq r$ then
\begin{align*}
\left\|\mcG(t,\omega,U)-\mcG(t,\omega,V)\right\|_{\gamma(\mcH,\mcE_p)}&\leq \left\|\Gamma(t,\omega,U)-\Gamma(t,\omega,V)\right\|_{\mathcal{L}(\mcE_p)}\cdot \|R\|_{\gamma(\mcH,\mcE_p)}\\
&\leq \|R\|_{\gamma(\mcH,\mcE_p)}\cdot L_{\mcG}^{(r)}\cdot \|U-V\|_{\mcE^c},
\end{align*}
where $L_{\mcG}^{(r)}$ is the maximum of the Lipschitz-constants of the functions $g_i$ on the ball $\{x\in\real\colon |x|\leq r\}$ (see \eqref{eq:gidefLip}) and $\|R\|_{\gamma(\mcH,\mcE_p)}$ is finite.

Furthermore, applying \eqref{eq:gidefnov}, the last statement of Assumption $(7)$ follows similarly as above, hence there exists a constant $c'>0$ such that
\begin{equation}\label{eq:Ass3.5mcG}
\left\|\mcG(t,\omega,U)\right\|_{\gamma(\mcH,\mcE_p)}\leq c'\left(1+\|U\|_{\mcE^c}\right)^{\frac{k}{K}}
\end{equation}
for all $(t,\omega,U)\in [0,T]\times \Omega\times \mcE^c.$
\end{proof}

In the following theorem we will state a result regarding the regularity of the mild solution of \eqref{eq:SCPn} that exists according to Theorem \ref{theo:SCPnsolcont}. We will show that the trajectories of the solutions are actually continuous in the vertices of the graph, hence they lie in the space
\begin{equation}\label{eq:mcB}
\mcB\coloneqq\left\{\left(\begin{smallmatrix} u\\r\end{smallmatrix}\right)\in  D(L)\times \real^n: L u=r\right\},
\end{equation} 
where $(L,D(L))$ is the boundary operator defined in \eqref{eq:Ldef}. The space $\mcB$ can be looked at as the Banach space of all continuous functions on the graph $\mathsf{G}$ with norm
\begin{equation}\label{eq:normmcB}
\|U\|_{\mcB}=\max_{j=1,\dots ,m}\sup_{[0,1]}|u_j|=\|U\|_{\mcE^c},\quad U=\left(\begin{smallmatrix} u\\r\end{smallmatrix}\right)\in \mcB.
\end{equation} 
It is easy to see that
\begin{equation}\label{eq:mcBEc}
\mcB\cong D(L)\text{ and }\mcB\subset\mcE^c.
\end{equation}

We can again prove the following continuous embeddings. In contrast to Corollary \ref{cor:fractionalspaceincl}, also the first embedding will be dense.

\begin{prop}\label{prop:fractionalspaceinclmcB}
Let $\mcE_p^{\theta}$ defined in \eqref{eq:fractdommcE} for $p\in[1,\infty)$. Then for $\theta> \frac{1}{2p}$ the following continuous, dense embeddings are satisfied:
\begin{equation}\label{eq:EthetainclmcB}
\mcE_p^{\theta}\hookrightarrow\mcB\hookrightarrow \mcE_p.
\end{equation} 
\end{prop}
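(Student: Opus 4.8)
The plan is to build everything on Corollary~\ref{cor:fractionalspaceincl}, the explicit description of $D(\mcA_p)$ in Lemma~\ref{lem:mcAp}, and an elementary density argument. The right-hand embedding $\mcB\hookrightarrow\mcE_p$ is immediate: $\mcB\subset\mcE^c$ carrying the same norm by \eqref{eq:mcBEc}--\eqref{eq:normmcB}, and $\mcE^c\hookrightarrow\mcE_p$; density follows since, by Lemma~\ref{lem:mcAp}, $D(\mcA_p)\subset\mcB$, and $D(\mcA_p)$ is dense in $\mcE_p$ because $(\mcA_p,D(\mcA_p))$ generates a $C_0$-semigroup on $\mcE_p$ for $p\in[1,\infty)$ by Proposition~\ref{prop:sgrextend}.

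For the left-hand embedding I would first note that Corollary~\ref{cor:fractionalspaceincl} already yields the continuous embedding $\mcE_p^\theta\hookrightarrow\mcE^c$ for $\theta>\frac1{2p}$, so it only remains to see that its range lies inside $\mcB$ and is dense there. Since $\mcA_p$ generates a bounded analytic $C_0$-semigroup on $\mcE_p$, the subspace $D(\mcA_p)$ is dense in $\mcE_p^\theta$ (it is a core for $(\omega'-\mcA_p)^\theta$ as $\theta<1$; equivalently, $\mcT_p(t)x\to x$ in $\mcE_p^\theta$ and $\mcT_p(t)x\in D(\mcA_p)$). Now $D(\mcA_p)\subset\mcB$, the space $\mcB$ is closed in $\mcE^c$ --- for a sequence in $\mcB$ converging in $\mcE^c$ the limit still takes a common value at each vertex and still satisfies $Lu=r$ by pointwise passage to the limit --- and $\mcE_p^\theta\hookrightarrow\mcE^c$ is continuous; hence $\mcE_p^\theta$ is contained in the closure of $D(\mcA_p)$ in $\mcE^c$, which lies in $\mcB$, and $\|U\|_{\mcB}=\|U\|_{\mcE^c}\lesssim\|U\|_{\mcE_p^\theta}$. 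This gives continuity of $\mcE_p^\theta\hookrightarrow\mcB$; and since $D(\mcA_p)\subset\mcE_p^\theta\subset\mcB$, density of $\mcE_p^\theta$ in $\mcB$ for $\|\cdot\|_{\mcB}$ reduces to density of $D(\mcA_p)$ in $\mcB$.

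To establish the latter, given $U=\left(\begin{smallmatrix}u\\ r\end{smallmatrix}\right)\in\mcB$ --- so $u\in(C[0,1])^m\cap D(L)$ and $Lu=r$ --- I would approximate each component $u_j$ by its $n$-th Bernstein polynomial $B_nu_j$ on $[0,1]$. These are polynomials, hence lie in $W^{2,p}(0,1;\mu_j\,dx)$, they converge uniformly to $u_j$ on $[0,1]$, and they reproduce the endpoints: $B_nu_j(0)=u_j(0)$ and $B_nu_j(1)=u_j(1)$. Consequently $u^{(n)}\coloneqq(B_nu_1,\dots,B_nu_m)^\top$ satisfies $u^{(n)}_j(\mv_i)=u_j(\mv_i)=r_i$ for every $j\in\Gamma(\mv_i)$, so $u^{(n)}\in D(L)$ and $Lu^{(n)}=r$; thus $\left(\begin{smallmatrix}u^{(n)}\\ r\end{smallmatrix}\right)\in D(\mcA_p)$ by Lemma~\ref{lem:mcAp} and converges to $U$ in $\mcE^c=\mcB$. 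The only point requiring care is forcing the approximants to satisfy the vertex-continuity constraint exactly, and the Bernstein construction --- available precisely because every edge is parametrised over $[0,1]$ --- does this automatically, so no serious obstacle remains.
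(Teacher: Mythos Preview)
Your argument is correct; the route, however, differs from the paper's. The paper invokes an explicit product decomposition $\mcB\cong (C_0[0,1])^m\times\real^n$ established in \cite[Lem.~3.6]{KS21} (via the same similarity transform used for $D(\mcA_p)$) and pairs it with Lemma~\ref{lemma:fractionalspaceiso}(2), so that both embeddings reduce to the standard componentwise chain $W_0^{2\theta,p}(0,1)\hookrightarrow C_0[0,1]\hookrightarrow L^p(0,1)$, where continuity and density are classical. Your proof bypasses that external isomorphism: you obtain $\mcE_p^\theta\subset\mcB$ from the soft facts that $D(\mcA_p)$ is dense in $\mcE_p^\theta$ and contained in $\mcB$, and that $\mcB$ is closed in $\mcE^c$ (into which $\mcE_p^\theta$ embeds by Corollary~\ref{cor:fractionalspaceincl}); density of $\mcE_p^\theta$ in $\mcB$ you get by a direct Bernstein-polynomial approximation, exploiting endpoint interpolation to preserve the vertex constraints exactly. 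Your version is self-contained and more elementary in that it never identifies $\mcE_p^\theta$ or $\mcB$ with concrete product spaces, while the paper's version is shorter once the decomposition of $\mcB$ is available and makes the density claims immediate from classical Sobolev theory.
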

\begin{proof}
According to Lemma \ref{lemma:fractionalspaceiso}$(2)$ we have that for $\theta> \frac{1}{2p}$
\begin{equation}\label{eq:interpolpf}
\mcE_p^{\theta} \cong \left(\prod _{j=1}^m W_{0}^{2\theta,p}(0,1;\mu_j dx)\right)\times \real^n
\end{equation}
holds.
In \cite[Lem.~3.6]{KS21} we have proved that
\begin{equation}\label{eq:mcBszorzat}
\mcB\cong (C_0[0,1])^m\times\real^n,
\end{equation}
where $C_0[0,1]$ denotes such continuous functions that are $0$ at the endpoints of the interval $[0,1]$. Hence, combining this with \eqref{eq:interpolpf} and using Sobolev imbedding, we obtain that for $\theta>\frac{1}{2p}$ the continuous, dense embedding
\[\mcE_p^{\theta}\hookrightarrow \mcB\]
is satisfied. Using \eqref{eq:mcBszorzat} again, we have $\mcB\hookrightarrow\mcE_p$, and the claim follows.
\end{proof}

To the analogy of $\mcV_{T,q}$, we define for a fixed $T>0$ and $q\geq 1$
\begin{equation}\label{eq:V_Thullam}
\widetilde{\mcV}_{T,q}\coloneqq L^q\left(\Omega;C((0,T];\mcB)\cap L^{\infty}(0,T;\mcB)\right)
\end{equation}
being a Banach space with norm
\begin{equation}\label{eq:V_Thullamnorm}
\left\|U\right\|^q_{\widetilde{\mcV}_{T,q}}\coloneqq \mathbb{E}\sup_{t\in [0,T]}\|U(t)\|_{\mcB}^q,\quad U\in \widetilde{\mcV}_{T,q},
\end{equation}

In the following we will show that the trajectories of the solution of \eqref{eq:SCPn} lie in $\mcB$.

\begin{theo}\label{theo:Holderreg}
Let $\mcF$, $\widetilde{\mcF}$, $\mcG$ and $\mcW$ defined as in \eqref{eq:mcFdef}, \eqref{eq:mcFtilde}, \eqref{eq:mcG2def} and \eqref{eq:mcW2def}, respectively. Let $q>4$ be arbitrary. Then for every $\xi\in L^{q}(\Omega,\mathscr{F}_0,\mathbb{P};\mcE^c)$ equation \eqref{eq:SCPn} has a unique global mild solution in $\widetilde{\mcV}_{T,q}$.
\end{theo}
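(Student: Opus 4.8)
The plan is to bootstrap on the mild equation
\[
\mcX(t)=\mcS(t)\xi+\mcS\ast\mcF(\cdot,\mcX(\cdot))(t)+\mcS\ast\widetilde{\mcF}(\cdot,\mcX(\cdot))(t)+\mcS\diamond\mcG(\cdot,\mcX(\cdot))(t),
\]
which is satisfied almost surely, for every $t\in[0,T]$, by the mild solution $\mcX\in\mcV_{T,q}$ furnished by Theorem \ref{theo:SCPnsolcont}. I keep the very same $p\in[2,\infty)$, $\theta\in(\tfrac{1}{2p},\tfrac12)$ and $\kG\in(\tfrac14,\tfrac12)$ that were chosen in the proof of that theorem, so that in particular $\theta+\kG<\tfrac12-\tfrac1q$. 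By Proposition \ref{prop:fractionalspaceinclmcB} the choice $\theta>\tfrac{1}{2p}$ gives the continuous (in fact dense) embedding $\mcE_p^{\theta}\hookrightarrow\mcB$. Hence it suffices to show that each of the four terms on the right-hand side has trajectories in $C((0,T];\mcB)\cap L^{\infty}(0,T;\mcB)$ almost surely; once this is done, the identity $\|U\|_{\mcB}=\|U\|_{\mcE^c}$ for $U\in\mcB$ from \eqref{eq:normmcB}, together with the a priori estimate \eqref{eq:xvtfin}, yields $\mathbb{E}\sup_{t\in[0,T]}\|\mcX(t)\|_{\mcB}^q<\infty$, i.e.\ $\mcX\in\widetilde{\mcV}_{T,q}$, and uniqueness in $\widetilde{\mcV}_{T,q}$ is inherited from the uniqueness in $\mcV_{T,q}$ because $\mcB\subset\mcE^c$.

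For the initial-value term, analyticity of $\mcS$ on $\mcE_p$ gives $\mcS(t)\xi\in D(\mcA_p)\subset\mcB$ for every $t>0$ (using the description of $D(\mcA_p)$ in Lemma \ref{lem:mcAp}), and $t\mapsto\mcS(t)\xi$ is continuous into $\mcE_p^{\theta}$, hence into $\mcB$, on $(0,T]$. Even though $\xi$ need not lie in $\mcB$, the $L^{\infty}(0,T;\mcB)$-bound is automatic: by \eqref{eq:normmcB} and the contractivity of $\mcS^c$ on $\mcE^c$ (Proposition \ref{prop:mcAonmcEc}) one has $\|\mcS(t)\xi\|_{\mcB}=\|\mcS(t)\xi\|_{\mcE^c}\le\|\xi\|_{\mcE^c}$ for all $t>0$. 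For the drift convolution, $\mcF(\cdot,\mcX(\cdot))$ belongs to $L^{\infty}(0,T;\mcE^c)\hookrightarrow L^{\infty}(0,T;\mcE_p)$ and is continuous on $(0,T]$, so the parabolic smoothing estimate $\|\mcS(t-s)\|_{\mathcal{L}(\mcE_p,\mcE_p^{\theta})}\lesssim(t-s)^{-\theta}$ with $\theta<1$, combined with standard properties of parabolic convolutions (cf.\ \cite[Prop.~4.2.1]{Lun95}), gives $\mcS\ast\mcF(\cdot,\mcX(\cdot))\in C([0,T];\mcE_p^{\theta})\hookrightarrow C([0,T];\mcB)$. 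For $\mcS\ast\widetilde{\mcF}(\cdot,\mcX(\cdot))$ I would apply \cite[Lem.~3.6]{vNVW08} exactly as in the proof of Theorem \ref{theo:KvN4.3gen}, using that $\widetilde{\mcF}$ maps $\mcE^c$ boundedly into $\mcE_p^{-1/2}$ by Lemma \ref{lem:mcG2prop}(1) and that $\theta+\tfrac12<1$; this gives $\mcS\ast\widetilde{\mcF}(\cdot,\mcX(\cdot))\in C([0,T];\mcE_p^{\theta})$, and once again $\mcE_p^{\theta}\hookrightarrow\mcB$ places this term in $C([0,T];\mcB)$.

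For the stochastic convolution I would repeat the estimate already carried out in the proof of Theorem \ref{theo:KvN4.3gen}, but retaining the gain of regularity up to $\mcE_p^{\theta}$ instead of collapsing it to $\mcE^c$: since $\mcG(\cdot,\mcX(\cdot))$ takes values in $\gamma(\mcH,\mcE_p^{-\kG})$ with $\|\mcG(s,\mcX(s))\|_{\gamma(\mcH,\mcE_p^{-\kG})}\lesssim(1+\|\mcX(s)\|_{\mcE^c})^{k/K}$, which is bounded because $\mcX\in L^{\infty}(0,T;\mcE^c)$, and since $\mcE_p$ is UMD of type $2$, the factorization argument (resting on \cite[Lem.~3.6]{vNVW08} and the type-$2$ property, cf.\ \cite[Lem.~5.2]{vNVW08}) yields, under the condition $\theta+\kG<\tfrac12-\tfrac1q$, that $\mcS\diamond\mcG(\cdot,\mcX(\cdot))$ has trajectories in $C([0,T];\mcE_p^{\theta})\hookrightarrow C([0,T];\mcB)$ almost surely. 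Assembling the four contributions finishes the argument. I do not expect a genuinely new obstacle here — the analytic content is the same as in Theorems \ref{theo:KvN4.3gen} and \ref{theo:SCPnsolcont} — and the only point that needs care is the simultaneous compatibility of the exponents, namely that the $\theta$ already produced in Theorem \ref{theo:SCPnsolcont} satisfies all three requirements $\theta>\tfrac{1}{2p}$ (for $\mcE_p^{\theta}\hookrightarrow\mcB$), $\theta<\tfrac12$ (for the $\widetilde{\mcF}$-convolution, since $\kF=\tfrac12$) and $\theta+\kG<\tfrac12-\tfrac1q$ (for the stochastic convolution), together with the correct treatment of the singularity of $\mcS(\cdot)\xi$ at $t=0$, which is precisely where the coincidence $\|\cdot\|_{\mcB}=\|\cdot\|_{\mcE^c}$ is used since $\xi$ is only assumed to lie in $\mcE^c$.
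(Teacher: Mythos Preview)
Your proposal is correct and follows essentially the same bootstrapping strategy as the paper: show that each of the four terms in the mild equation has trajectories in $\mcE_p^{\eta}$ (the paper writes $\eta$ rather than reusing $\theta$, but the constraints are identical) and then use $\mcE_p^{\eta}\hookrightarrow\mcB$ from Proposition~\ref{prop:fractionalspaceinclmcB}, with the $L^\infty$-bound at $t=0$ handled via $\|\cdot\|_{\mcB}=\|\cdot\|_{\mcE^c}$. The only cosmetic differences are that the paper chooses fresh parameters rather than recycling those from Theorem~\ref{theo:SCPnsolcont}, and cites \cite[Lem.~3.6]{vNVW08} for the $\mcF$-convolution whereas you invoke \cite[Prop.~4.2.1]{Lun95}; the analytic content is the same.
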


\begin{proof}
By Theorem \ref{theo:SCPnsolcont} there exists a global mild solution $\mcX\in \mcV_{T,q}$, that is
\begin{equation}\label{eq:Holderregpf1}
\mcX\in L^{q}(\Omega; C((0,T];\mcE^c)\cap L^{\infty}(0,T;\mcE^c)).
\end{equation}
This solution satisfies the following implicit equation (see \eqref{eq:mildsol}):
\begin{equation}\label{eq:proofmildsol}
\mcX(t)=\mcS(t)\xi+\mcS\ast \mcF(\cdot,\mcX(\cdot))(t)+\mcS\ast \widetilde{\mcF}(\mcX(\cdot))(t)+\mcS\diamond \mcG(\cdot,\mcX(\cdot))(t),
\end{equation}
where $\mcS$ denotes the semigroup generated by $\mcA_p$ on $\mcE_p$ for some $p\geq 2$ large enough, $\ast$ denotes the usual convolution, $\diamond$ denotes the stochastic convolution with respect to $\mcW.$ We only have to show that for almost all $\omega\in\Omega$ for the trajectories 
\begin{equation}\label{eq:Holderregpf2}
\mcX(\cdot)\in C((0,T];\mcB)\cap L^{\infty}(0,T;\mcB)
\end{equation} 
holds. Then $\mcX\in \widetilde{\mcV}_{T,q}$ is satisfied since the norms on $\mcE^c$ and $\mcB$ coincide and \eqref{eq:Holderregpf1} is true. We will show \eqref{eq:Holderregpf2} by showing it for all the three terms on the right-hand-side of \eqref{eq:proofmildsol}.

We first fix $0<\alpha<\frac{1}{2}$ and $\eta> 0$ such that 
\[\eta+\frac{1}{4}<\alpha-\frac{1}{q}\] 
holds. It is possible beacuse of the assumption $q>4$. Then we choose $\kG\in(\frac{1}{4},\frac{1}{2})$ such that
\begin{equation}\label{eq:HolderregpfetakG}
\eta+\kG<\alpha-\frac{1}{q}
\end{equation}
is satisfied. We further fix $p\geq 2$ such that
\begin{equation}\label{eq:Holderregpfp}
\frac{1}{2p}<\eta.
\end{equation}
\begin{enumerate}
	\item  Using \eqref{eq:mcBEc}, we have that almost surely $\xi\in \mcE^c$ holds. Hence, using the analiticity of $\mcS$ on $\mcE^c$ (see Proposition \ref{prop:mcAonmcEc}) and the obvious fact $D(\mcA)\subset\mcB$ (see \eqref{eq:mcAp}), we have that almost surely 
\begin{equation}\label{eq:Sxi}
\mcS(\cdot)\xi\in C((0,T];\mcB)\cap L^{\infty}(0,T;\mcB)
\end{equation} 
also holds.
\item For the deterministic convolution term with $\mcF$ in \eqref{eq:proofmildsol} observe that by the choice of the constants
\begin{equation}\label{eq:peta}
\frac{1}{2p}<\eta<1-\frac{1}{q}
\end{equation}
holds. We now apply \cite[Lem.~3.6]{vNVW08} with $\alpha=1$, $\theta=\lambda=0$, $q$ instead of $p$ and for $\eta$. Hence, we obtain that there exist constants $C\geq 0$ and $\ve>0$ such that
\begin{equation}\label{eq:est2nd1}
\|\mcS\ast \mcF(\cdot,\mcX(\cdot))(t)\|_{C([0,T];\mcE_p^{\eta})}\leq CT^{\ve}\|\mcF(\cdot,\mcX(\cdot))\|_{L^q(0,T;\mcE_p)}.
\end{equation}
Taking the $q$th power on the right-hand-side of \eqref{eq:est2nd1}, using Corollary \ref{cor:fractionalspaceincl} and \eqref{eq:Ass3.5mcF} we obtain that
\begin{align}
\|\mcF(\cdot,\mcX(\cdot))\|^q_{L^q(0,T;\mcE_p)}&=\int_0^T\|\mcF(s,\mcX(s))\|^q_{\mcE_p}\ds\\
& \lesssim\int_0^T\|\mcF(s,\mcX(s))\|^q_{\mcE^c}\ds\\
&\lesssim \int_0^T(1+\|\mcX(s)\|^{K\cdot q}_{\mcE^c})\ds\\
&\lesssim (1+\sup_{t\in[0,T]}\|\mcX(t)\|^{K\cdot q}_{\mcE^c}).
\end{align}
Hence,
\begin{equation}
\|\mcS\ast \mcF(\cdot,\mcX(\cdot))\|^q_{C([0,T];\mcE_p^{\eta})}
\leq C_T\cdot \left(1+\sup_{t\in[0,T]}\|\mcX(t)\|_{\mcE^c}^{K\cdot q}\right).
\end{equation}
By Proposition \ref{prop:fractionalspaceinclmcB} and \eqref{eq:peta} we obtain that $\mcS\ast \mcF(\cdot,\mcX(\cdot)) \in C([0,T];\mcB)$ holds and for a positive constant $C_T'>0$
\begin{equation}\label{eq:SastF}
\|\mcS\ast \mcF(\cdot,\mcX(\cdot))\|^q_{C([0,T];\mcB)}
\leq C_T'\cdot \left(1+\sup_{t\in[0,T]}\|\mcX(t)\|_{\mcE^c}^{K\cdot q}\right).
\end{equation}
Since we know by \eqref{eq:Holderregpf1} that for almost all $\omega\in\Omega$ the right-hand-side is finite, we obtain that the left-hand-side is almost surely finite.
\item For the deterministic convolution term with $\widetilde{\mcF}$ in \eqref{eq:proofmildsol} we proceed similarly as before.
We apply \cite[Lem.~3.6]{vNVW08} with $\alpha=1$, $\lambda=0$, $\theta=\frac{1}{2}$, $q$ instead of $p$ and for $\eta$. 
We obtain that there exist constants $C\geq 0$ and $\ve>0$ such that
\begin{equation}\label{eq:est2nd1Fhullam}
\|\mcS\ast \widetilde{\mcF}(\mcX(\cdot))(t)\|_{C([0,T];\mcE_p^{\eta})}\leq CT^{\ve}\|\widetilde{\mcF}(\mcX(\cdot))\|_{L^q(0,T;\mcE_p^{-\frac{1}{2}})}.
\end{equation}
Taking the $q$th power on the right-hand-side of \eqref{eq:est2nd1Fhullam} and using \eqref{eq:Fhullamcontlin} we obtain that
\begin{equation}
\|\widetilde{\mcF}(\mcX(\cdot))\|^q_{L^q(0,T;\mcE_p^{-\frac{1}{2}})}=\int_0^T\|\widetilde{\mcF}(\mcX(s))\|^q_{\mcE_p^{-\frac{1}{2}}}\ds \lesssim \int_0^T\|\mcX(s)\|^{q}_{\mcE^c}\ds \lesssim \sup_{t\in[0,T]}\|\mcX(t)\|^{q}_{\mcE^c}.
\end{equation}
Hence,
\begin{equation}
\|\mcS\ast \widetilde{\mcF}(\mcX(\cdot))\|^q_{C([0,T];\mcE_p^{\eta})}
\leq C_T\cdot \sup_{t\in[0,T]}\|\mcX(t)\|_{\mcE^c}^{q}.
\end{equation}
By Proposition \ref{prop:fractionalspaceinclmcB} and \eqref{eq:peta} we obtain that $\mcS\ast \widetilde{\mcF}(\mcX(\cdot)) \in C([0,T];\mcB)$ holds and for a positive constant $C_T'>0$
\begin{equation}\label{eq:SastFhullam}
\|\mcS\ast \widetilde{\mcF}(\mcX(\cdot))\|^q_{C([0,T];\mcB)}
\leq C_T'\cdot \sup_{t\in[0,T]}\|\mcX(t)\|_{\mcE^c}^{q}.
\end{equation}
Since we know by \eqref{eq:Holderregpf1} that for almost all $\omega\in\Omega$ the right-hand-side is finite, we obtain that the left-hand-side is almost surely finite.
\item We now prove that for the stochastic convolution term $\mcS\diamond \mcG(\cdot,\mcX(\cdot))\in C([0,T];\mcB)$ almost surely holds by showing that
\begin{equation}\label{eq:Holderreg2pf1}
\mathbb{E}\left\|\mcS\diamond \mcG(\cdot,\mcX(\cdot))\right\|^q_{C([0,T];\mcB)}
\end{equation}
is finite. By \eqref{eq:HolderregpfetakG} we can apply \cite[Prop.~4.2]{vNVW08} with $\la=0$, $\theta=\kG$, $q$ instead of $p$, $\alpha$ and $\eta$, and we have that there exist $\ve>0$ and $C\geq 0$ such that
\begin{align}
&\mathbb{E}\left\|\mcS\diamond \mcG(\cdot,\mcX(\cdot))\right\|^q_{C([0,T],\mcE_p^{\eta})}\\ &\leq C^qT^{\ve q}\int_0^T\mathbb{E}\left\|s\mapsto (t-s)^{-\alpha}\mcG(s,\mcX(s))\right\|^q_{\gamma(L^2(0,t;\mcH),\mcE_p^{-\kG})}\dt.
\end{align}
In the following we proceed similarly as done in the proof of \cite[Thm.~4.3]{KvN12}, with $N=1$ and $q$ instead of $p.$ Since $\mcE_p^{-\kG}$ is a Banach space of type $2$ (because $\mcE_p$ is of that type), the continuous embedding
\begin{equation}\label{eq:L2embedding}
L^2(0,t;\gamma(\mcH,\mcE_p^{-\kG}))\hookrightarrow \gamma(L^2(0,t;\mcH),\mcE_p^{-\kG})
\end{equation}
holds.
Using this, Young's inequality and the growth property of $\mcG$ (see the proof of Theorem \ref{theo:SCPnsolcont}), respectively, we obtain the following estimates
\begin{align}
&\mathbb{E}\left\|\mcS\diamond \mcG(\cdot,\mcX(\cdot))\right\|^q_{C([0,T],\mcE_p^{\eta})}\\
&\quad\lesssim T^{\ve q}\int_0^T\mathbb{E}\left\|s\mapsto (t-s)^{-\alpha}\mcG(s,\mcX(s))\right\|^q_{L^2(0,t;\gamma(\mcH,\mcE_p^{-\kG}))}\dt\\
&\quad=T^{\ve q}\mathbb{E}\int_0^T\left(\int_0^t (t-s)^{-2\alpha}\left\|\mcG(s,\mcX(s))\right\|^2_{\gamma(\mcH,\mcE_p^{-\kG})}\ds\right)^{\frac{q}{2}}\dt\\
&\quad\leq T^{\ve q}\left(\int_0^T t^{-2\alpha}\dt\right)^{\frac{q}{2}}\mathbb{E}\int_0^T \left\|\mcG(t,\mcX(t))\right\|^q_{\gamma(\mcH,\mcE_p^{-\kG})}\dt \\
&\quad\lesssim T^{(\frac{1}{2}-\alpha+\ve)q} \mathbb{E}\int_0^T\left(1+\|\mcX(t)\|_{\mcE^c}\right)^{\frac{k}{K}q}\dt\\
&\quad \lesssim T^{(\frac{1}{2}-\alpha+\ve)q+1}\left(1+\|\mcX\|^{\frac{k}{K}q}_{\mcV_{T,q}}\right).
\end{align}
Hence, for each $T>0$ there exists constant $C_{T}>0$ such that
\begin{equation}\label{eq:SdiamondG2}
\mathbb{E}\left\|\mcS\diamond \mcG(\cdot,\mcX(\cdot))\right\|^q_{C([0,T],\mcE_p^{\eta})}
\leq C'_{T}\cdot\left(1+\|\mcX\|^{\frac{k}{K}q}_{\mcV_{T,q}}\right)
\end{equation}
Using that $\frac{k}{K}<1$, we have that
\begin{equation}
\mathbb{E}\left\|\mcS\diamond \mcG(\cdot,\mcX(\cdot))\right\|^q_{C([0,T];\mcE_p^{\eta})}
\leq C''_{T}\cdot\left(1+\|\mcX\|^{q}_{\mcV_{T,q}}\right)
\end{equation}
holds. By Proposition \ref{prop:fractionalspaceinclmcB} and \eqref{eq:Holderregpfp} we obtain that for a positive constant $\tilde{C}_T>0$
\begin{equation}\label{eq:SdiamondG}
\mathbb{E}\left\|\mcS\diamond \mcG(\cdot,\mcX(\cdot))\right\|^q_{C([0,T];\mcB)}
\leq \tilde{C}_T\cdot\left(1+\|\mcX\|^{q}_{\mcV_{T,q}}\right)
\end{equation}
is satisfied.
\end{enumerate} 
Finally, by \eqref{eq:Sxi}, \eqref{eq:SastF} and \eqref{eq:SdiamondG}, we obtain \eqref{eq:Holderregpf2} and hence the proof is complete.
\end{proof}

We again treat the case when $h_j\equiv 0$, $j=1,\dots ,m$ separately by defining the operator $\mcG$ as in \eqref{eq:mcGdef} to obtain better regurality for the solutions.

\begin{theo}\label{theo:Holderreghj0}
Let $\mcF$, $\widetilde{\mcF}$, $\mcG$ and $\mcW$ defined as in \eqref{eq:mcFdef}, \eqref{eq:mcFtilde}, \eqref{eq:mcGdef} and \eqref{eq:mcW2def}, respectively, and assume that $h_j\equiv 0,$ $j=1,\dots m.$ 
Then for arbitrary $q>2$ and for every $\xi\in L^{q}(\Omega,\mathscr{F}_0,\mathbb{P};\mcE^c)$ equation \eqref{eq:SCPn} has a unique global mild solution in $\widetilde{\mcV}_{T,q}$.
\end{theo}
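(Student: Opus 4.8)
The plan is to follow the proof of Theorem~\ref{theo:Holderreg} essentially line by line, the only genuine difference being that when the noise acts only in the vertices the operator $\mcG$ of \eqref{eq:mcGdef} takes values in $\gamma(\mcH,\mcE_p)$, i.e.\ we are in the case $\kG=0$. By Theorem~\ref{theo:SCPnsolconthj0} there is, for every $q>2$, a unique global mild solution $\mcX\in\mcV_{T,q}$, which satisfies
\[
\mcX(t)=\mcS(t)\xi+\mcS\ast\mcF(\cdot,\mcX(\cdot))(t)+\mcS\ast\widetilde{\mcF}(\mcX(\cdot))(t)+\mcS\diamond\mcG(\cdot,\mcX(\cdot))(t),
\]
where $\mcS=\mcT_p$ on $\mcE_p$ for some $p\ge 2$ to be fixed. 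Since the norms of $\mcE^c$ and $\mcB$ coincide on $\mcB$ and $\mcX\in L^q(\Omega;C((0,T];\mcE^c)\cap L^\infty(0,T;\mcE^c))$, it suffices to show that almost surely each of the four terms on the right-hand side belongs to $C((0,T];\mcB)\cap L^\infty(0,T;\mcB)$.

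Because $q>2$ we may fix $\alpha\in(0,\tfrac12)$ with $\alpha-\tfrac1q>0$, then $\eta\in(0,\alpha-\tfrac1q)$ (note $\eta<1-\tfrac1q$ automatically), and finally $p\ge 2$ with $\tfrac1{2p}<\eta$. With these choices Proposition~\ref{prop:fractionalspaceinclmcB} yields the continuous dense embedding $\mcE_p^{\eta}\hookrightarrow\mcB$, which is the device used to upgrade $\mcE_p^{\eta}$-regularity to $\mcB$-regularity. The term $\mcS(\cdot)\xi$ is treated as in step~(1) of the proof of Theorem~\ref{theo:Holderreg}: $\xi\in\mcE^c$ almost surely by \eqref{eq:mcBEc}, $\mcS$ is analytic on $\mcE^c$ by Proposition~\ref{prop:mcAonmcEc}, and $D(\mcA)\subset\mcB$ by \eqref{eq:mcAp}, hence $\mcS(\cdot)\xi\in C((0,T];\mcB)\cap L^\infty(0,T;\mcB)$ a.s. The two deterministic convolutions are handled verbatim as in steps~(2) and~(3) there: applying \cite[Lem.~3.6]{vNVW08} (with $\alpha=1$, $\lambda=0$ and $\theta=0$, resp.\ $\theta=\tfrac12$, and $q$ in place of $p$) together with \eqref{eq:Ass3.5mcF} and Corollary~\ref{cor:fractionalspaceincl}, resp.\ \eqref{eq:Fhullamcontlin}, one gets $\mcS\ast\mcF(\cdot,\mcX(\cdot))$ and $\mcS\ast\widetilde{\mcF}(\mcX(\cdot))$ in $C([0,T];\mcE_p^{\eta})$ with norms bounded by powers of $\sup_{t\in[0,T]}\|\mcX(t)\|_{\mcE^c}$, which are finite almost surely; the embedding $\mcE_p^{\eta}\hookrightarrow\mcB$ then places both terms in $C([0,T];\mcB)$.

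For the stochastic convolution $\mcS\diamond\mcG(\cdot,\mcX(\cdot))$ we proceed as in step~(4) of the proof of Theorem~\ref{theo:Holderreg}, but now with $\kG=0$. We apply \cite[Prop.~4.2]{vNVW08} with $\lambda=0$, $\theta=\kG=0$, $q$ in place of $p$, and the $\alpha,\eta$ fixed above; this is admissible since $\eta<\alpha-\tfrac1q$. Using that $\mcE_p$ has type $2$ we invoke $L^2(0,t;\gamma(\mcH,\mcE_p))\hookrightarrow\gamma(L^2(0,t;\mcH),\mcE_p)$, then Young's inequality (legitimate because $2\alpha<1$) and the growth estimate \eqref{eq:Ass3.5mcG}, to obtain
\[
\mathbb{E}\left\|\mcS\diamond\mcG(\cdot,\mcX(\cdot))\right\|^q_{C([0,T];\mcE_p^{\eta})}\lesssim T^{(\frac12-\alpha+\ve)q+1}\left(1+\|\mcX\|^{\frac kK q}_{\mcV_{T,q}}\right)\lesssim 1+\|\mcX\|^{q}_{\mcV_{T,q}},
\]
where in the last step we used $\tfrac kK<1$. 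Since $\mcX\in\mcV_{T,q}$ the right-hand side is finite, so $\mcS\diamond\mcG(\cdot,\mcX(\cdot))\in C([0,T];\mcE_p^{\eta})$ almost surely, and by $\mcE_p^{\eta}\hookrightarrow\mcB$ also in $C([0,T];\mcB)$. Combining the four terms gives $\mcX(\cdot)\in C((0,T];\mcB)\cap L^\infty(0,T;\mcB)$ almost surely, hence $\mcX\in\widetilde{\mcV}_{T,q}$, and uniqueness is inherited from Theorem~\ref{theo:SCPnsolconthj0} since $\widetilde{\mcV}_{T,q}\subset\mcV_{T,q}$. The only point that genuinely needs checking beyond the earlier proof is that the weaker assumption $q>2$ still permits the consistent choice of $\alpha$, $\eta$ and $p$ above; this works precisely because $\kG=0$ removes the constraint $\kG>\tfrac14$ that forced $q>4$ in Theorem~\ref{theo:Holderreg}, so no genuine new obstacle arises.
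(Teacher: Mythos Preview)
Your proof is correct and follows essentially the same approach as the paper's own proof: both reduce to the argument of Theorem~\ref{theo:Holderreg}, noting that only step~(4) requires modification since now $\kG=0$, and both apply \cite[Prop.~4.2]{vNVW08} with $\theta=\lambda=0$ after choosing $\alpha\in(0,\tfrac12)$, $\eta$ and $p$ so that $\tfrac1{2p}<\eta<\alpha-\tfrac1q$. The only cosmetic difference is the order in which the parameters are selected (you fix $\alpha$, then $\eta$, then $p$; the paper fixes $\alpha$ and $p$ first, then $\eta$), but the resulting constraints are identical.
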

\begin{proof}
The claim can be proved analogously to Theorem \ref{theo:Holderreg} except for step (4). To show that in this case $\mcS\diamond \mcG(\cdot,\mcX(\cdot))\in C([0,T];\mcB)$ almost surely holds we first fix $0<\alpha<\frac{1}{2}$ and $p\geq 2$ such that
\begin{equation}\label{eq:mainthmBp2}
\frac{1}{2p}<\alpha-\frac{1}{q}
\end{equation} 
holds (it is possible since $q>2$). We further choose $\eta>0$ such that 
\begin{equation}\label{eq:peta2}
\frac{1}{2p}<\eta<\alpha-\frac{1}{q}
\end{equation} 
is satisfied. Applying \cite[Prop.~4.2]{vNVW08} with $\theta=\la=0$ and $q$ instead of $p$, we have that there exist $\ve>0$ and $C\geq 0$ such that
\begin{equation}
\mathbb{E}\left\|\mcS\diamond \mcG(\cdot,\mcX(\cdot))\right\|^q_{C([0,T];\mcE_p^{\eta})}\leq C^qT^{\ve q}\int_0^T\mathbb{E}\left\|s\mapsto (t-s)^{-\alpha}\mcG(s,\mcX(s))\right\|^q_{\gamma(L^2(0,t;\mcH),\mcE_p)}\dt.
\end{equation}
In the following we proceed similarly as done in the proof of \cite[Thm.~4.3]{KvN12}, with $N=1$ and $q$ instead of $p.$ Since $\mcE_p$ is a Banach space of type $2$, we can use the continuous embedding
\[ L^2(0,t;\gamma(\mcH,\mcE_p))\hookrightarrow \gamma(L^2(0,t;\mcH),\mcE_p),\]
Young's inequality and \eqref{eq:Ass3.5mcG}, respectively, to obtain the following estimates
\begin{align}
\mathbb{E}\left\|\mcS\diamond \mcG(\cdot,\mcX(\cdot))\right\|^q_{C([0,T];\mcE_p^{\eta})}&\lesssim T^{\ve q}\int_0^T\mathbb{E}\left\|s\mapsto (t-s)^{-\alpha}\mcG(s,\mcX(s))\right\|^q_{L^2(0,t;\gamma(\mcH,\mcE_p))}\dt\\
&=T^{\ve q}\mathbb{E}\int_0^T\left(\int_0^t (t-s)^{-2\alpha}\left\|\mcG(s,\mcX(s))\right\|^2_{\gamma(\mcH,\mcE_p)}\ds\right)^{\frac{q}{2}}\dt\\
&\leq T^{\ve q}\left(\int_0^T t^{-2\alpha}\dt\right)^{\frac{q}{2}}\mathbb{E}\int_0^T \left\|\mcG(t,\mcX(t))\right\|^q_{\gamma(\mcH,\mcE_p)}\dt \\
&\lesssim T^{(\frac{1}{2}-\alpha+\ve)q} \mathbb{E}\int_0^T\left(1+\|\mcX(t)\|_{\mcE^c}\right)^{\frac{k}{K}q}\dt\\
&\lesssim T^{(\frac{1}{2}-\alpha+\ve)q+1}\left(1+\mathbb{E}\sup_{t\in[0,T]}\|\mcX(t)\|^{\frac{k}{K}q}_{\mcE^c}\right).
\end{align}
Using that $\frac{k}{K}<1$, we have that there exists a constant $C''_{T}$ such that
\begin{equation}
\mathbb{E}\left\|\mcS\diamond \mcG(\cdot,\mcX(\cdot))\right\|^q_{C([0,T];\mcE_p^{\eta})}
\leq C''_{T}\cdot\left(1+\|\mcX\|^{q}_{\mcV_{T,q}}\right)
\end{equation}
holds. By Proposition \ref{prop:fractionalspaceinclmcB} and \eqref{eq:peta2} we obtain that for a positive constant $\tilde{C}_T>0$
\begin{equation*}
\mathbb{E}\left\|\mcS\diamond \mcG(\cdot,\mcX(\cdot))\right\|^q_{C([0,T];\mcB)}
\leq \tilde{C}_T\cdot\left(1+\|\mcX\|^{q}_{\mcV_{T,q}}\right)
\end{equation*}
is satisfied and thus
\begin{equation}
\mcS\diamond \mcG(\cdot,\mcX(\cdot))\in C([0,T];\mcB)
\end{equation} 
almost surely.
\end{proof}

\begin{rem}
If, in Theorems \ref{theo:Holderreg} and \ref{theo:Holderreghj0}, the initial condition satisfies $\xi\in L^{q}(\Omega,\mathscr{F}_0,\mathbb{P};\mcB)$, then the global mild solution belongs even to $L^q(\Omega;C([0,T];\mcB))$ instead of $\widetilde{\mcV}_{T,q}$. This follows from the fact that the semigroup $\mcS$ is strongly continuous on $\mcB$ which can be shown by a completely analogous argument as in \cite[Prop.~3.8]{KS21arxiv}.
\end{rem}


\appendix
\section{Proof of Proposition \ref{prop:formA}}\label{app:pfpropdetermL2}
We defined the form $\ea$ in \eqref{eq:form} -- \eqref{eq:domform} as
\begin{align}
\ea(U,V)&\coloneqq \sum_{j=1}^m\int_0^1 \mu_j c_j(x) u'_j(x) \overline{g'_j(x)} dx+\sum_{j=1}^m\int_0^1 \mu_j p_j(x)u_j(x) \overline{v_j(x)} dx-\langle Mr,q\rangle_{\comp^n}\\
&\text{ for }U=\left(\begin{smallmatrix}u\\ r\end{smallmatrix}\right),\, V=\Big(\begin{smallmatrix}g\\ q\end{smallmatrix}\Big)
\end{align}
on the Hilbert space $\mathcal{E}_2$ with domain
\begin{equation}
D\left(\ea\right)=\mathcal{V}\coloneqq \left\{U=\left(\begin{smallmatrix}u\\ r\end{smallmatrix}\right)\colon u\in \left(H^1(0,1)\right)^m\cap D(L),\, r\in \mathbb{C}^{n},\; L u=r\right\}.
\end{equation}
The operator associated to a form is defined in Definition \ref{defi:opassform} as
\begin{equation*}
\begin{array}{rcl}
D(\mcB)&\coloneqq&\left\{U\in \mcV:\exists V\in \mcE_2 \hbox{ s.t. } \ea(U,H)=\langle V,H\rangle_{\mcE_2}\; \forall H\in \mcV\right\},\\
\mcB U&\coloneqq&-V.
\end{array}\end{equation*}
The operator $\left(\mathcal{A}_2,D(\mathcal{A}_2)\right)$ from \eqref{eq:calA}--\eqref{eq:domcalA} is defined on $\mcE_2$ as
\begin{equation}
\mcA_2=\begin{pmatrix}
	A_{max} & 0\\
	C & M
\end{pmatrix}
\end{equation}
with domain
\begin{equation}
D(\mcA_2)=\left\{ \left(\begin{smallmatrix}u\\ r\end{smallmatrix}\right)\in D(A_{max})\times \mathbb{C}^{n}\colon L u=r\right\}.
\end{equation}

\begin{prop}\label{prop:formAapp}
The operator associated with the form $\ea$ is $\left(\mathcal{A}_2,D(\mathcal{A}_2)\right)$.
\end{prop}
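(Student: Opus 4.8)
The plan is to establish the two inclusions $D(\mcA_2)\subseteq D(\mcB)$ and $D(\mcB)\subseteq D(\mcA_2)$ together with the identity $\mcB=\mcA_2$ on the common domain, working directly from Definition~\ref{defi:opassform}. The only analytic ingredients are two integrations by parts and one elliptic regularity step.

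\emph{Step 1: $\mcA_2\subseteq\mcB$.} Fix $U=\left(\begin{smallmatrix}u\\ r\end{smallmatrix}\right)\in D(\mcA_2)$, so that $u\in\left(H^2(0,1)\right)^m\cap D(L)$, $r\in\comp^n$ and $Lu=r$, and let $H=\left(\begin{smallmatrix}g\\ q\end{smallmatrix}\right)\in\mcV$ be arbitrary. Integrating by parts on each edge gives
\[
\sum_{j=1}^m\mu_j\int_0^1(c_ju_j')'(x)\,\overline{g_j(x)}\dx=\sum_{j=1}^m\mu_j\bigl[c_ju_j'\overline{g_j}\bigr]_0^1-\sum_{j=1}^m\mu_j\int_0^1 c_j(x)u_j'(x)\,\overline{g_j'(x)}\dx .
\]
Since $g\in D(L)$ satisfies $g_j(\mv_i)=q_i$ whenever $j\in\Gamma(\mv_i)$, reorganizing the boundary sum $\sum_j\mu_j[c_ju_j'\overline{g_j}]_0^1$ according to the vertices and using the definition~\eqref{eq:matrixFiw} of $\Phi_w^{\pm}$ shows that it equals $-\langle Cu,q\rangle_{\comp^n}$, with $C$ the feedback operator of~\eqref{eq:opC}. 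Inserting this into $\langle\mcA_2U,H\rangle_{\mcE_2}=\langle A_{max}u,g\rangle_{E_2}+\langle Cu+Mr,q\rangle_{\comp^n}$ and accounting for the $p_j$-term yields $\langle\mcA_2U,H\rangle_{\mcE_2}=-\ea(U,H)$ for every $H\in\mcV$. By Definition~\ref{defi:opassform} this is exactly the statement that $U\in D(\mcB)$ and $\mcB U=\mcA_2U$.

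\emph{Step 2: $\mcB\subseteq\mcA_2$.} Let $U=\left(\begin{smallmatrix}u\\ r\end{smallmatrix}\right)\in D(\mcB)\subseteq\mcV$ and let $V=\left(\begin{smallmatrix}v\\ s\end{smallmatrix}\right)\in\mcE_2$ satisfy $\ea(U,H)=\langle V,H\rangle_{\mcE_2}$ for all $H\in\mcV$. Testing against $H=\left(\begin{smallmatrix}g\\ 0\end{smallmatrix}\right)$ with $g\in\left(C_c^{\infty}(0,1)\right)^m\subset D(L)$ gives $-(c_ju_j')'+p_ju_j=v_j$ on each edge in the distributional sense; since $0<c_j\in C^1[0,1]$ this forces $c_ju_j'\in H^1(0,1)$, hence $u_j\in H^2(0,1)$, so $u\in D(A_{max})$ and $v=-A_{max}u$. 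Testing now against a general $H=\left(\begin{smallmatrix}g\\ q\end{smallmatrix}\right)\in\mcV$ and integrating by parts (legitimate now that $u\in(H^2)^m$), the interior term cancels $\langle v,g\rangle_{E_2}$ and the boundary contribution computed in Step~1 yields $\langle Cu+Mr,q\rangle_{\comp^n}=-\langle s,q\rangle_{\comp^n}$. Since $L$ maps $\left(H^1(0,1)\right)^m\cap D(L)$ onto $\comp^n$ (e.g.\ using piecewise affine functions on the edges), $q$ ranges over all of $\comp^n$, so $s=-(Cu+Mr)$. Hence $V=-\mcA_2U$, and $U\in D(A_{max})\times\comp^n$ with $Lu=r$, i.e.\ $U\in D(\mcA_2)$ and $\mcB U=\mcA_2U$.

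I expect the main obstacle to be the bookkeeping in Step~1 showing that $\sum_j\mu_j[c_ju_j'\overline{g_j}]_0^1$ reassembles into exactly $-\langle Cu,q\rangle_{\comp^n}$: this is where the sign conventions in $\Phi_w^{\pm}$, the continuity condition encoded in $D(L)$, and the definition of $C$ must all fit together, and it is precisely this step that turns the Kirchhoff law into the ``hidden'' vertex condition of the operator associated with the form. The elliptic regularity upgrade of $u$ from $\left(H^1\right)^m$ to $\left(H^2\right)^m$ in Step~2 is routine but essential, since it is what pins the form operator down to $\mcA_2$ rather than an extension of it.
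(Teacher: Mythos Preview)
Your proof is correct and follows essentially the same route as the paper's: both establish the two inclusions via integration by parts, reorganize the boundary terms at the vertices into $-\langle Cu,q\rangle_{\comp^n}$, and upgrade $u$ from $(H^1)^m$ to $(H^2)^m$ by testing against compactly supported functions on each edge. The only cosmetic difference is that in Step~2 you identify the edge and vertex components of $V$ separately (first $v=-A_{max}u$, then $s=-(Cu+Mr)$ via the surjectivity of $L$), whereas the paper repeats the full integration by parts and concludes $\mcA_2U=-V$ in one stroke using the density of $\mcV$ in $\mcE_2$.
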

\begin{proof}
Denote by $\left(\mathcal{B},D(\mathcal{B})\right)$ the operator associated with $\ea$.
Let us first show that $\mcA_2\subset \mcB$. Take $U=\left(\begin{smallmatrix}u\\ r\end{smallmatrix}\right)\in D(\mcA_2)$. Then for all
$H=\Big(\begin{smallmatrix}h\\ d\end{smallmatrix}\Big)\in \mcV$
\begin{align}\label{parts}
\ea(U,H)&=\sum_{j=1}^m \int_0^1 \mu_j c_j(x) u'_j(x)\overline{h'_j(x)}dx-\langle Mr,d\rangle_{\comp^n}\notag\\
&= \sum_{j=1}^m\left[\mu_j c_j u'_j\overline{h_j}\right]_0^1 - \sum_{j=1}^m \int_0^1 \mu_j (c_j u_j')'(x)\overline{h_j(x)}dx\\
&-\langle Mr,d\rangle_{\comp^n}.\notag
\end{align}
Using the entries \eqref{eq:fiijpm} of the incidence matrix $\Phi$, the first term
above can be written as
\[
\sum_{j=1}^m\left[\mu_j c_j u'_j\overline{h_j}\right]_0^1 = \sum_{j=1}^m\sum_{i=1}^n \mu_j c_j(\mv_i)(\phi_{ij}^- -\phi_{ij}^+)u'_j(\mv _i)\overline{h_j(\mv _i)}.
\]
Observe now that by the definition of $\mcV$
\[L h= d\]
holds. Changing the sums, and using \eqref{eq:matrixFiw} and \eqref{eq:opC}, we have that
\[\sum_{j=1}^m\left[\mu_j c_j u'_j\overline{h_j}\right]_0^1 =\sum_{i=1}^n \overline{d_i} \underbrace{\sum_{j=1}^m (\omega_{ij}^- -\omega_{ij}^+)u'_j(\mv _i)}_{=-[Cu]_i} =-\langle Cu,d\rangle_{\comp^n}.
\]
The second term in \eqref{parts} is by the definition \eqref{eq:opAmax}--\eqref{eq:domAmax} of $(A_{max},D(A_{max}))$
\[
-\sum_{j=1}^m \int_0^1 \mu_j (c_j u_j')'(x)\overline{h_j(x)}dx=-\langle A_{max} u,h\rangle_{E_2},
\]
which makes sense because $A_{max}f\in E_2$. 
Hence,
\begin{align*}
\ea(U,H)&= -\langle A_{max}u,h\rangle_{E_2}-\langle Cu,d\rangle_{\comp^n}-\langle Mr,d\rangle_{\comp^n}\\
&=-\langle \mcA_2 U, H\rangle_{\mcE_2}
\end{align*}
The proof of the inclusion $\mcA_2\subset \mcB$ is completed.\\

To check the converse inclusion $\mcB\subset \mcA_2$ take $U=\left(\begin{smallmatrix}u\\ r\end{smallmatrix}\right)\in D(\mcB)$. By definition, there exists $V=\Big(\begin{smallmatrix}v\\ q\end{smallmatrix}\Big)\in \mcE_2$ such that
\[\ea(U,H)=\langle V,H\rangle_{\mcE_2}\text{ for all } H=\Big(\begin{smallmatrix}h\\ d\end{smallmatrix}\Big)\in \mcV,\]
and $\mcB U=-V$. In particular,
\begin{equation}\label{integr}
\sum_{j=1}^m \int_0^1 \mu_j c_j(x)u'_j(x)\overline{h'_j(x)}dx - \langle Mr,d\rangle_{\comp^n}
=\sum_{j=1}^m \int_0^1 v_j(x)\overline{h_j(x)}\mu_j dx+ \langle q,d\rangle_{\comp^n}
\end{equation}
for all $H=\Big(\begin{smallmatrix}h\\ d\end{smallmatrix}\Big)\in \mcV$, hence also for all $h^j$ of the form
\begin{equation*}
h^j=\left(\begin{smallmatrix}
0\\
\vdots\\
h_j\\
\vdots\\
0
\end{smallmatrix}\right)\leftarrow j^{\rm th}\hbox{ row},\;\; h_j\in H^1_0(0,1)
\end{equation*}
and $L h^j=d=0_{\comp^n}.$
From this follows that~\eqref{integr} in fact implies
\[\int_0^1 \mu_j c_j(x)u'_j(x)\overline{h'_j(x)}dx = \int_0^1 v_j(x)\overline{h_j(x)}\mu_j dx\hbox{\; for all } j=1,\ldots,m,\;\; h_j\in H^1_0(0,1).\]
By definition of weak derivative this means that $c_j\cdot u_j'\in
H^1(0,1)$ for all $j=1,\ldots,m$. 
Since $0<c_j\in H^1(0,1)$, it follows that in fact $u_j'\in H^1(0,1)$ for all
$j=1,\ldots,m$. We conclude that $u\in \left(H^2(0,1)\right)^m$, hence by $U\in\mcV$, also $U\in D(\mcA_2)$ holds.

Moreover, integrating by parts as in~\eqref{parts} we see -- analogously to the first part of the proof -- that
if~\eqref{integr} holds for some $H=\Big(\begin{smallmatrix}h\\ d\end{smallmatrix}\Big)\in \mcV$, then
\begin{align*}&- \sum_{j=1}^m \int_0^1 \mu_j (c_j u_j')'(x)\overline{h_j(x)}dx-\langle Cu,d\rangle_{\comp^n}-\langle Mr,d\rangle_{\comp^n}\\
&=\sum_{j=1}^m \int_0^1 v_j(x)\overline{h_j(x)}\mu_j dx+\langle q,d\rangle_{\comp^n}.
\end{align*}  
That is 
\[\ea(U,H)=-\langle \mcA_2 U, H\rangle_{\mcE_2}=\langle V, H\rangle_{\mcE_2}\]
for arbitrary $H\in\mcV$, hence $ \mcA_2 U=-V=\mcB U$ and this completes the proof.
\end{proof}

\section{Proof of Proposition \ref{prop:sgrextend}}\label{app:extsgrLp}

We defined the spaces $\mcE_p$ for $p\in[1,\infty]$ in \eqref{eq:mcEp}. In the subsequent lemma we prove crucial properties of the semigroup $(\mcT_2(t))_{t\geq 0}$ that will imply its extendability to the spaces $\mcE_p$. The proof is similar to that of \cite[Lem.~4.1 and Prop.~5.3]{MR07} except the fact that we have non-diagonal matrix $M$. Therefore we give it in details. 

\begin{lemma}\label{lem:subMarkov}
If Assumption \ref{as:M} holds for $M$, then the semigroup $(\mcT_2(t))_{t\geq 0}$ on $\mcE_2$, associated with $\ea$,
is \emph{sub-Markovian}, i.e., it is real, positive, and contractive on $\mcE_\infty$.
\end{lemma}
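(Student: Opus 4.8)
The plan is to apply the Beurling--Deny--Ouhabaz criteria to the form $\ea$, checking each of the three properties (realness, positivity, $L^\infty$-contractivity) via a suitable projection onto a closed convex set and the characterization in terms of $\ea$. Throughout I will work on the product Hilbert space $\mcE_2 = E_2\times\ell^2(\comp^n)$ and use that, by Proposition \ref{prop:formA}, the operator associated with $\ea$ is $(\mcA_2,D(\mcA_2))$, which generates the semigroup $(\mcT_2(t))_{t\geq 0}$.

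\emph{Realness.} Since $c_j,p_j$ are real-valued and $M$ is real, the form satisfies $\ea(\overline{U},\overline{V})=\overline{\ea(U,V)}$; equivalently, the convex set $\mcE_2^{\real}$ of real-valued elements is invariant under the projection and $\ea(\mathrm{Re}\,U,\mathrm{Im}\,U)\in\real$. By \cite[Thm.~2.5 / Prop.~2.5 in Ouhabaz's book, or the version quoted in the excerpt]{Ou05} this gives that $\mcT_2(t)$ maps real elements to real elements, so we may henceforth argue on the real space.

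\emph{Positivity.} I would use Ouhabaz's criterion: the semigroup is positive iff for every $U=\left(\begin{smallmatrix}u\\ r\end{smallmatrix}\right)\in\mcV$ one has $U^+\coloneqq\left(\begin{smallmatrix}u^+\\ r^+\end{smallmatrix}\right)\in\mcV$ and $\ea(U^+,U^-)\leq 0$, where $u^+=(u_1^+,\dots,u_m^+)$ etc. The first point holds because $H^1(0,1)$ is stable under $v\mapsto v^+$ and the continuity condition $Lu=r$ is preserved by taking positive parts (the pointwise vertex values satisfy $u_j(\mv_i)^+ = u_\ell(\mv_i)^+$). For the second point, the diffusion term vanishes since $u_j'(u_j^+)' \equiv 0$ a.e. (on $\{u_j>0\}$ we have $(u_j^+)'=u_j'$ but $u_j^-=0$, and on $\{u_j<0\}$ we have $(u_j^+)'=0$); the potential term $\int \mu_j p_j u_j^+ u_j^-$ vanishes pointwise; and the remaining term is $-\langle M r^+, r^-\rangle_{\comp^n}=-\sum_{i\ne k} b_{ik} r_i^+ r_k^- - \sum_i b_{ii} r_i^+ r_i^-$, where the diagonal contribution vanishes pointwise and the off-diagonal contribution is $\leq 0$ precisely because $b_{ik}\geq 0$ for $i\ne k$ (Assumption \ref{as:M}(2)). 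Hence $\ea(U^+,U^-)\leq 0$ and positivity follows.

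\emph{$L^\infty$-contractivity.} Here I would use the criterion that $\mcT_2(t)$ is $\mcE_\infty$-contractive iff for all $U\in\mcV$ the truncation $(1\wedge U^+ - 1\wedge U^-)$, i.e. the projection onto the unit ball of $\mcE_\infty$, call it $\Pi U$, lies in $\mcV$ and $\ea(\Pi U, U-\Pi U)\geq 0$. Again $\mcV$ is stable under this truncation (it acts coordinatewise on $u$ and on $r$, preserves $H^1$, and preserves the vertex-continuity since the truncation is applied with the same threshold at every vertex). Writing $U-\Pi U$ componentwise, the diffusion term contributes $\sum_j\int_0^1\mu_j c_j (\Pi u_j)'(u_j-\Pi u_j)' \geq 0$ because $(\Pi u_j)'(u_j-\Pi u_j)'\geq 0$ a.e.\ (the derivative of the truncated function and of the excess point in the same direction), the potential term contributes $\sum_j\int_0^1\mu_j p_j (\Pi u_j)(u_j-\Pi u_j)\geq 0$ since $p_j\geq 0$ and pointwise $(\Pi u_j)(u_j-\Pi u_j)\geq 0$, and the vertex term contributes $-\langle M \Pi r, r-\Pi r\rangle_{\comp^n}$. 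The crux is to show this last quantity is $\geq 0$. Using symmetry of $M$ and decomposing at each vertex according to whether $|r_i|\leq 1$ (so $(\Pi r)_i=r_i$ and the $i$-th excess is zero) or $|r_i|>1$ (so $(\Pi r)_i=\sgn r_i$ and $(r-\Pi r)_i = r_i-\sgn r_i$ has the same sign as $r_i$), one checks term by term: the diagonal terms give $-b_{ii}(\Pi r)_i(r-\Pi r)_i\geq 0$ since $b_{ii}<0$ by Assumption \ref{as:M}(3) and $(\Pi r)_i(r-\Pi r)_i\geq 0$; and for the off-diagonal terms $-b_{ik}(\Pi r)_i(r-\Pi r)_k$ (with $b_{ik}\geq 0$) one uses that $(\Pi r)_i(r-\Pi r)_k\leq |(r-\Pi r)_k|\leq (r-\Pi r)_k\sgn r_k$ and reorganizes. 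I expect this $M$-term --- specifically reconciling the off-diagonal sign condition with the row-sum condition \eqref{as:M}(3) in the truncation estimate --- to be the main obstacle; it is exactly the point where the argument departs from the diagonal-$M$ case of \cite{MR07}, and is the reason Assumption \ref{as:M} is imposed in full strength. Once $\ea(\Pi U, U-\Pi U)\geq 0$ is verified, Ouhabaz's criterion yields $\mcE_\infty$-contractivity, and together with realness and positivity this is precisely the sub-Markovian property claimed.
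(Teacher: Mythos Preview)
Your approach is essentially the same as the paper's---both verify the Beurling--Deny--Ouhabaz criteria for $\ea$---though you use the equivalent formulations $\ea(U^+,U^-)\le 0$ for positivity and $\ea(\Pi U,U-\Pi U)\ge 0$ for $L^\infty$-contractivity, whereas the paper checks $\ea(|U|,|U|)\le\ea(U,U)$ and (for $U\ge 0$) $\ea(1\wedge U,(U-1)^+)\ge 0$. Your realness and positivity steps are complete and correct.

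The genuine gap is the vertex term in the $L^\infty$-contractivity step, which you explicitly flag as ``the main obstacle'' and do not finish. Your sketch of bounding the off-diagonal terms individually via $(\Pi r)_i(r-\Pi r)_k\le |(r-\Pi r)_k|$ does not close by itself: single off-diagonal terms $-b_{ik}(\Pi r)_i(r-\Pi r)_k$ can be negative, so one must combine diagonal and off-diagonal contributions using the strict row-sum condition in Assumption~\ref{as:M}(3). Concretely, writing $s_i\coloneqq (r-\Pi r)_i$ and noting that $s_i\ne 0$ forces $(\Pi r)_i=\sgn r_i=\sgn s_i$, one has for each such $i$
\[
-s_i\sum_k b_{ik}(\Pi r)_k=-b_{ii}|s_i|-s_i\sum_{k\ne i}b_{ik}(\Pi r)_k\ge -b_{ii}|s_i|-|s_i|\sum_{k\ne i}b_{ik}=-|s_i|\Bigl(b_{ii}+\sum_{k\ne i}b_{ik}\Bigr)>0,
\]
using $|(\Pi r)_k|\le 1$, $b_{ik}\ge 0$ for $k\ne i$, and Assumption~\ref{as:M}(3). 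Summing over $i$ gives $-\langle M\Pi r,r-\Pi r\rangle\ge 0$. The paper avoids this computation entirely by observing (via \cite[Ex.~4.44(3)]{Mu14}) that Assumption~\ref{as:M} already makes $(\e^{tM})_{t\ge 0}$ sub-Markovian on $\comp^n$, which immediately yields $-\langle M(1\wedge r),(r-1)^+\rangle\ge 0$ by the finite-dimensional Ouhabaz criterion. Either route works; once you insert the displayed estimate your argument is complete.
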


\begin{proof}
By~\cite[Prop.~2.5, Thm.~2.7, and Cor.~2.17]{Ou05}, we need to
check that the following criteria are verified for the domain $\mcV$
of $\ea$, see \eqref{eq:domform}:
\begin{itemize}
\item ${U}\in \mcV \Rightarrow \overline{U}\in \mcV \hbox{ and } {\ea}({\rm Re}\,{U},{\rm Im}\,{U})\in\mathbb{R}$,
\item ${U}\in \mcV, U\hbox{ real-valued }\Rightarrow \vert {U}\vert \in \mcV \hbox{ and } \ea(\vert U\vert,\vert U\vert)\leq \ea(U,U)$,
\item $0\leq U\in \mcV \Rightarrow 1\wedge {U}\in \mcV \hbox{ and } \ea(1\wedge U,(U-1)^+)\geq 0$.
\end{itemize}

It is clear that $\overline{k}\in H^1(0,1)$ if $k\in H^1(0,1)$. Further, if $k$ is real valued, then $\vert
k\vert\in H^1(0,1)$ and $\vert k\vert'=\sgn k\cdot k'$, and if $0\leq k$, then $1\wedge k\in H^1(0,1)$ with
$(1\wedge k)'=k'\cdot {\mathbf{1}}_{\{k<1\}}$ and $((k-1)^{+})'=k'\cdot {\mathbf{1}}_{\{k>1\}}$.

Take any 
\[U=\left(\begin{smallmatrix} u\\r\end{smallmatrix}\right)\in \mcV,\quad u\in \left(H^1(0,1)\right)^m\cap D(L),\, r\in \mathbb{C}^{n},\; L u=r.\] 
By definition we have
$\overline{u_j}=(\overline{u})_j$, $1\leq j\leq m$. It follows
from the above arguments that $\overline{f}\in
\left(H^1(0,1)\right)^m$, and one can see that 
\[L\overline{u}=\overline{L u}=\overline{r}.\] 
Hence, $\overline{U}\in \mcV$. Moreover, the first two sums of $\ea({\rm Re}\,{U},{\rm Im}\,{U})$ are sums of $m$ integrals. Recall that all the weights are real-valued, nonnegative functions. Since all the integrated functions are real-valued, and the third sum is the sum of real numbers, it follows that ${\ea}({\rm Re}\,{U},{\rm Im}\,{U})\in\mathbb R$. Thus, the first criterion has been checked.

Moreover, if $U$ is a real-valued, then 
$\vert u_j\vert =\vert u\vert_j$, $1\leq j\leq m$, $\vert r_i\vert =\vert r\vert_i$, $1\leq i\leq n$, and one sees as above
that $\vert U\vert\in \mcV$. In particular, 
$\vert\vert u\vert'\vert^2=\vert u'\vert^2$, and
\begin{equation}\label{eq:aUU}
\ea(\vert U\vert,\vert U\vert)=\sum_{j=1}^m\int_0^1 \mu_j c_j(x)\vert u_j'(x)\vert^2 \dx+\sum_{j=1}^m\int_0^1 \mu_j p_j(x)\vert u_j(x)\vert^2 \dx-\langle M\vert r \vert,\vert r \vert\rangle.
\end{equation}
Let us investigate the third term. For $r$ real
\[
\langle Mr,r\rangle=\sum_{i=1}^n r_i\sum_{k=1}^n b_{ik}r_k=\sum_{i=1}^n\sum_{k=1}^n b_{ik}r_ir_k,
\]
and
\[
\langle M\vert r \vert,\vert r \vert\rangle=\sum_{i=1}^n\vert r_i\vert\sum_{k=1}^n b_{ik}\vert r_k\vert =\sum_{i=1}^n\sum_{k=1}^n b_{ik}\vert r_k r_i \vert
\]
For $i\neq k$, by Assumption \ref{as:M}.(2) we have
\[b_{ik}\vert r_i r_k \vert \geq b_{ik}r_ir_k.\]
Since
\[b_{ii}\vert r_i r_i \vert=b_{ii}r_ir_i,\quad i=1,\dots ,n.\]
trivially holds, we obtain that $\langle M\vert r\vert,\vert r \vert\rangle\geq \langle Mr,r\rangle$ for all $r$ real, and by \eqref{eq:aUU},
\[\ea(\vert U\vert,\vert U\vert)\leq \ea(U,U).\]

Finally, take $0\leq U=\left(\begin{smallmatrix} u\\r\end{smallmatrix}\right)\in \mcV$. Then
\[
1\wedge u=1\wedge \left(\begin{smallmatrix}
u_1\\
\vdots\\
u_m\\
\end{smallmatrix}\right)
= \left(\begin{smallmatrix}
1\wedge u_1\\
\vdots\\
1\wedge u_m\\
\end{smallmatrix}\right),
\]
with all the functions $1\wedge u_j\in H^1(0,1)$, hence $1\wedge u\in \left(H^1(0,1)\right)^m$. Clearly,
\[L(1\wedge u)=1\wedge L u=1\wedge r,\]
i.e., $1\wedge U\in \mcV$. Furthermore,
\begin{align*}
\ea(1\wedge U,(U-1)^+)&=\sum_{j=1}^m\int_0^1 \mu_j c_j (1\wedge u_j)'(x) ((u_j-1)^+)'(x) dx\\
&+\sum_{j=1}^m\int_0^1 \mu_j p_j (1\wedge u_j)(x) (u_j-1)^+(x) dx-\langle M(1\wedge r),(r-1)^+\rangle\\[0.3em]
&= \sum_{j=1}^m\int_0^1 \mu_j c_j u_j'(x) {\mathbf{1}}_{\{u_j<1\}}(x) u_j'(x) {\mathbf{1}}_{\{u_j>1\}}(x) dx\\
&+\sum_{j=1}^m\int_0^1 \mu_j p_j u_j(x) {\mathbf{1}}_{\{u_j<1\}}(x) u_j(x) {\mathbf{1}}_{\{u_j>1\}}(x) dx\\
&-\langle M(1\wedge r),(r-1)^+\rangle\\[0.3em]
&=0-\langle M(1\wedge r),(r-1)^+\rangle.
\end{align*}
Using Assumption \ref{as:M} and \cite[Ex.~4.44(3)]{Mu14}, we have that $M$ generates a sub-Markovian semigroup, hence, by \cite[Cor.~2.17]{Ou05}, for $r\geq 0,$
\[-\langle M(1\wedge r),(r-1)^+\rangle\geq 0\]
holds. We have checked also the third criterion, thus the claim follows.
\end{proof}

%
%
%
%
\textbf{Acknowledgement.} We thank the referee for the constructive criticism and suggestions which helped to improve the quality of the manuscript significantly.

\end{document}